\newcommand{\be}{\begin{eqnarray}}
\newcommand{\ee}{\end{eqnarray}}
\newcommand{\beq}{\begin{equation}}
\newcommand{\eeq}{\end{equation}}
\newcommand{\beqn}{\begin{equation*}}
\newcommand{\eeqn}{\end{equation*}}
\newcommand{\slot}{\,\cdot\,}
\newcommand{\round}[1]{\lfloor#1\rfloor}
\newcommand{\fract}[1]{\{#1\}}
\newtheorem{thm}{Theorem}[section]
\newtheorem{prop}[thm]{Proposition}
\newtheorem{cor}[thm]{Corollary}
\newtheorem{lem}[thm]{Lemma}
\newtheorem{defn}[thm]{Definition}
\newtheorem{remark}[thm]{Remark}
\newtheorem{example}[thm]{Example}
\newcommand\cD{{\mathcal D}}
\newcommand\cK{{\mathcal K}}
\newcommand\cL{{\mathcal L}}
\newcommand\cM{{\mathcal M}}
\newcommand\cP{{\mathcal P}}
\newcommand\cT{{\mathcal T}}
\newcommand\bE{{\mathbb E}}
\newcommand\bN{{\mathbb N}}
\newcommand\bP{{\mathbb P}}
\newcommand\bR{{\mathbb R}}
\newcommand\bS{{\mathbb S}}
\newcommand\bZ{{\mathbb Z}}
\newcommand\rd{{\mathrm d}}
\newcommand\fF{{\mathfrak F}}
\newcommand\fS{{\mathfrak S}}
\newcommand\fm{{\mathfrak m}}
\newcommand\fs{{\mathfrak s}}
\newcommand{\ve}{\varepsilon}
\def\bfE{\mathbf{E}}
\def\bfP{\mathbf{P}}
\def\bfT{\mathbf{T}}
\def\bfV{\mathbf{V}}
\begin{document}

\title{Quasistatic dynamical systems}

\author[Neil Dobbs]{Neil Dobbs}
\address[Neil Dobbs]{
Department of Mathematics and Statistics, P.O.\ Box 68, Fin-00014 University of Helsinki, Finland.}
\email{neil.dobbs@gmail.com}
\urladdr{http://www.maths.tcd.ie/~ndobbs/maths}
\author[Mikko Stenlund]{Mikko Stenlund}
\address[Mikko Stenlund]{
Department of Mathematics and Statistics, P.O.\ Box 68, Fin-00014 University of Helsinki, Finland.}
\email{mikko.stenlund@helsinki.fi}
\urladdr{http://www.math.helsinki.fi/mathphys/mikko.html}

\keywords{Quasistatic dynamical system, martingale problem}

\thanks{2010 {\it Mathematics Subject Classification.} 37C60; 60G44, 60H10} 
 


\begin{abstract}

We introduce the notion of a quasistatic dynamical system, which generalizes that of an ordinary dynamical system. Quasistatic dynamical systems are inspired by the namesake processes in thermodynamics, which are idealized processes where the observed system transforms (infinitesimally) slowly due to external influence, tracing out a continuous path of thermodynamic equilibria over an (infinitely) long time span. 
Time-evolution of states under a quasistatic dynamical system is entirely deterministic, but choosing the initial state randomly renders the process a stochastic one.
In the prototypical setting where the time-evolution is specified by strongly chaotic maps on the circle, we obtain a description of the statistical behaviour as a stochastic diffusion process, under surprisingly mild conditions on the initial distribution, by solving a well-posed martingale problem. 
We also consider various admissible ways of centering the process, with the curious conclusion that the ``obvious'' centering suggested by the initial distribution sometimes fails to yield the expected diffusion.

\end{abstract}

\maketitle


\subsection*{Acknowledgements}
This work was supported partially by the Academy of Finland. Mikko Stenlund acknowledges the ERC grant MPOES, Jane and Aatos Erkko Foundation, and Emil Aaltosen S\"a\"ati\"o. The authors are grateful to Carlangelo Liverani for helpful correspondence as well as Dario Gasbarra and Kalle Kyt\"ol\"a for useful remarks.


\medskip
\section{Introduction}
This paper belongs to the wider context of studying statistical properties of dynamical systems using probabilistic techniques. We are going to propose a new, but very natural, class of \emph{non-stationary} dynamical systems, and to implement methods from probability theory (e.g., coupling) and stochastic analysis (a martingale problem) in their investigation, alongside methods from ergodic theory and dynamical systems.

\medskip
\subsection{Motivation and abstract setup.}\label{sec:motivation}

This paper grew out of the will to understand the following abstract setup. Consider a system~$\fs$ in contact with an ambient system~$\fS$. The joint system~$(\fs,\fS)$  is too large and complex to analyze, while~$\fs$, the observed subsystem of actual interest, is more amenable to scrutiny. 
The observed system~$\fs$ and the ambient system~$\fS$ are allowed to interact, with the following constraints: (i)~due to the size of the latter, the evolution of~$\fs$ has a negligible effect on that of~$\fS$, and (ii)~$\fs$ is robust in comparison to the strength of the interaction. 
By~(ii) we mean that the characteristics of~$\fs$ change \emph{very} slowly, yet in such a way that the cumulative change over a \emph{very} long time could be enormous. 

\medskip
At the conceptual level, the problem is to characterize the properties of~$\fs$ over a (long) lapse of time, at least in nontrivial special cases. 
In particular, it is not always reasonable to expect that the subsystem~$\fs$ reaches or remains in equilibrium under the perpetual influence of~$\fS$. Rather, it might slowly pass through small vicinities of equilibria, in a continuous manner. Given an absence of equilibrium states, the question arises as to how to describe statistical properties of such a system.

\medskip
In order to model the above setup, we propose the notion of a {\bf quasistatic dynamical system} in Section~\ref{sec:dtQDS}. In Section~\ref{sec:descQDS} we introduce a means of describing properties of quasistatic dynamical systems via stochastic diffusion processes. 
We follow this with a presentation of continuous-time quasistatic dynamical systems and subsequently a discussion of relevant literature. 
In Section~\ref{sec:model} we present a specific class of quasistatic dynamical systems, whose properties we investigate in the remainder of the paper. Section~\ref{sec:results} contains the statements of theorems, with an overview of the proofs in Section~\ref{sec:outline}.

\medskip
\subsection{(Discrete-time) Quasistatic dynamical systems} \label{sec:dtQDS}
Throughout the paper, we use the standard notation
\beqn
\text{$\fract{u} = u \pmod 1$ \quad and \quad $\round{u} = u-\fract{u}$} 
\eeqn
 for the fractional and integer part of a real number~$u\ge 0$, respectively.

\medskip
\begin{defn}\label{defn:QDS}
Let $X$ be a set and $\cM$ a collection of self-maps $T:X\to X$ equipped with a topology. Consider a triangular array 
\beqn
\bfT = \{T_{n,k}\in\cM\ :\ 0\le k\le n, \ n\ge 1\} 
\eeqn
of elements of~$\cM$. If there exists a piecewise-continuous curve $\gamma:[0,1]\to\cM$ such that
\beqn
\lim_{n\to\infty}T_{n,\round{nt}} = \gamma_t \ , \quad t\in[0,1]\ ,
\eeqn
we say that $(\bfT, \gamma)$ is a \emph{quasistatic dynamical system (QDS)}. 
\end{defn}

\medskip
\begin{defn}\label{defn:state_system_space} We call~$X$ the \emph{state space} (also \emph{phase space}) and~$\cM$ the \emph{system space} of the QDS. 
\end{defn}

\medskip
Obviously a QDS is a generalization of an ordinary dynamical system: a discrete-time dynamical system is determined by a single map $T:X\to X$, which is the degenerate~QDS obtained by setting $T_{n,k}=T$ for all $k$ and $n$. (Then $\gamma_t = T$ for all $t$.) 
Below we will also provide the definition of a continuous-time QDS which similarly generalizes the notion of an ordinary continuous-time dynamical system (semiflow).
One can readily come up with further generalizations of the definitions given, but spelling them out here seems to add little of substance for the purposes of this paper.

\medskip
Let us justify the definition above. An element of the state space $X$ represents the state of the system, whereas the triangular array~$\bfT$ describes the dynamics: given the initial state~$x\in X$ and integers $1\le k\le n$, the image 
\beqn
x_{n,k} = T_{n,k}\circ\dots\circ T_{n,1}(x) \in X
\eeqn
is the state of the system after~$k$ steps on the~$n$th level of the array~$\bfT$. Passing to the continuous time parameter $t\in[0,1]$ via the scaling $k = \round{nt}$, we note that the piecewise-constant curve~$t\mapsto T_{n,\round{nt}}$ approximates~$\gamma$ in the system space~$\cM$, and this approximation converges (pointwise) as $n\to\infty$. The system $T_{n,n}\approx \gamma_1$ differs significantly from the system $T_{n,1}\approx \gamma_0$ in typical situations.

\medskip
The rate of convergence of $T_{n, \round{nt}}$ to $\gamma_t$ will affect the properties of the QDS and may need to be specified when studying a particular system. 
Limiting properties will certainly depend on the limit curve $\gamma$ and may or may not depend on the array $\bfT$, see Section~\ref{sec:results}. 

\medskip
One cannot directly study a ``limit system'', for letting first $n \to \infty$ one would end up just iterating the map $\gamma_0$ and none other. Rather one must study (for example, statistical) properties of the QDS at level $n$ and see how those properties behave in the limit. 
The main goal of this paper is to initiate a systematic study of such systems that would eventually result in a comprehensive mathematical theory of~QDSs.

\medskip
Drawing a parallel with the first paragraph of this section, the limit curve~$\gamma$ models the evolution of the observed system~$\fs$, as the ambient system~$\fS$ forces~$\fs$ to transform, possibly significantly, over time. The jumps in~$\gamma$ allow for singular, abrupt events. For example, such events could include jumps from one connected component of the system space~$\cM$ to another.

\medskip
We have borrowed the apt term ``quasistatic'' from thermodynamics. There it refers to idealized processes in which the observed system transforms infinitesimally slowly due to external influence. Such a system is in thermodynamic equilibrium at any given time, yet traces out a continuous path of different equilibria over an infinitely long time span. For instance, all reversible thermodynamic processes are quasistatic. 
See, e.g.,~\cite{Mandl_1988,Douce_2011} for more background. The connection with thermodynamics should not be construed as a limitation of scope; the definition of QDS is, of course, purely abstract and could be used widely according to one's needs.

\medskip
A natural class of QDS is provided by the following example:
\begin{example}[Quasistatic billiards]
Dispersing billiards on a torus consists of strictly convex scatterers with smooth boundaries embedded in the (surface of the) torus. A particle moves on the torus, in the exterior of the scatterers, and experiences an elastic collision when it meets a scatterer. 
Assuming that the length of free flight between any two successive collisions is uniformly bounded, the location of the particle can be kept track of by keeping track of the collisions. 
For a fixed scatterer configuration~$K$, this leads to a representation of the dynamics by a billiard map $F_{K}$ mapping one collision to the next. A model of dispersing billiards with moving scatterers was introduced in~\cite{StenlundYoungZhang_2013}: Let~$(K_k)_{k=0}^\infty$ be a sequence of scatterer configurations, such that $d(K_{k-1},K_k)<\ve$ holds uniformly for some small~$\ve>0$. Here~$d$ is a natural distance on the space~$\cK$ of admissible configurations.
Then $F_{K_k,K_{k-1}}$ represents the dynamics between the~$(k-1)$th and~$k$th collisions, during which the configuration has changed by a distance~$<\ve$, from~$K_{k-1}$ to~$K_k$, and the compositions $F_{K_k,K_{k-1}}\circ\dots\circ F_{K_1,K_0}$ represent the dynamics for scatterers moving with speed~$<\ve$. 
This yields a quasistatic dynamical system in the limit~$\ve\to 0$ of infinitesimally slowly moving scatterers. More precisely, consider a triangular array of configurations
$
\{K_{n,k}\in\cK\ :\ 0\le k\le n, \ n\ge 1\} 
$
with $d(K_{n,k-1},K_{n,k})<\ve_n$, where $\lim_{n\to\infty}\ve_n = 0$. Assume moreover that there is a continuous curve $\gamma^\cK:[0,1]\to\cK$ such that $\lim_{n\to\infty}K_{n,\round{nt}} = \gamma^\cK_t$. Setting $T_{n,k} = F_{K_{n,k},K_{n,k-1}}$ for~$1\le k\le n$ and~$n\ge 1$ results in a QDS, which we coin quasistatic billiards. (The system space~$\cM$ can be identified with~$\cK\times\cK$, in which case the limit curve~$\gamma$ takes values in the diagonal subspace: $\gamma_t = (\gamma^\cK_t,\gamma^\cK_t)$.)
\end{example}

\medskip
\subsection{Statistical description of a QDS} \label{sec:descQDS}
In order to gather information about the state $x_{n,k}$ of the quasistatic dynamical system, one performs a measurement of an observable quantity, whose value is determined by the state. Accordingly, the quantity is represented by a function $f:X\to\bR$, called an ``observable'', and the outcome of the measurement by $f(x_{n,k})$.
We are going to study the sequence of measurements
\beqn
f(x_{n,0}),f(x_{n,1}),\dots,f(x_{n,n})
\eeqn
 on the $n$th level of the QDS, corresponding to the initial state $x_{n,0} \equiv x$, as $n\to\infty$. For practical reasons, such as sensitive dependence on the initial state (aka chaos), it is natural to assume that~$x$ is a random variable with values in $X$ and some distribution~$\mu$. This renders~$(x_{n,k})_{k=0}^n$ a sequence of random variables. 
 By definition, each~$x_{n,k}$ is completely determined by~$x$, and its distribution is the pushforward\footnote{Assuming the map $T$ is measurable, the pushforward of a measure $\mu$ is defined by $T_*\mu(E) = \mu(T^{-1}E)$ for all measurable sets $E$.} $\mu_{n,k} = (T_{n,k}\circ\dots\circ T_{n,1})_*\mu$, which will be different for different~$n$ and~$k$. In other words, the random variables~$x_{n,k}$ are neither independent nor identically distributed.

\medskip
The random paths
\beqn
t\mapsto S_n(x,t) = \int_0^{nt} f(x_{n,\round{s}})\,\rd s \ 
\eeqn
are piecewise-linear interpolations of Birkhoff sums: if $nt \in \bN$, $S_n(x,t) = \sum_{j=0}^{nt-1} f(x_{n,j})$. 
Understanding the distribution of these paths for a large class of observables in the limit~$n\to\infty$ is a natural goal and can be considered a good statistical description of a QDS. 
It is reasonable to hope that the random paths,
once properly centered and scaled, converge to a stochastic diffusion process. We shall prove that this indeed happens for a 
 paradigmatic~QDS whose system space consists of strongly chaotic expanding circle maps, formally presented in Section~\ref{sec:model}. 
The result holds for rather generic~$f$ and~$\mu$; we only ask that $f$ be Lipschitz continuous and $\mu$~absolutely continuous. The curve $\gamma$ need not be especially regular; H\"older continuity suffices. The limit process will depend on~$\gamma$ and~$f$, but not on~$\mu$. See Section~\ref{sec:results} for the precise statements of the main results. 

\medskip
To get an idea why passing to the limit $n\to\infty$ might yield a diffusion process for the paths, consider the following simple example:
\begin{example}[A degenerate case]\label{ex:degenerate}
    Let $T = T_{n,k} : x \mapsto 2x \pmod 1$ for all $n \geq 1$ and $0 \leq k \leq n$, so the curve $\gamma$ is constant, $\gamma_t = T$ for all $t$. 
    Define the observable $f : [0,1) \to \bR$  by $f(x)= -1$ for $x \in [0,\frac12)$ and $f(x) =1$ otherwise. Taking Lebesgue measure for the initial distribution $\mu$, the Birkhoff sums model a fair coin toss, or the simple symmetric random walk. The paths $n^{-1} S_n(x,\slot )$ converge in distribution to the zero path $t \mapsto 0$. The random paths $n^{-\frac12} S_n(x, \slot)$ converge in distribution to standard Brownian motion. 
\end{example}

\medskip
\subsection{Continuous-time QDS} \label{sec:ctQDS}
For completeness, we finish this section with a discussion on continuous-time dynamical systems. 
In continuous time, an ordinary dynamical system is determined by a semiflow, i.e., a one-parameter family of maps $\phi^s : X\to X$, $s\in\bR_+ = [0,\infty)$, satisfying the semigroup property $\phi^s\circ\phi^r = \phi^{s+r}$ with~$\phi^0 = \mathrm{id}_X$. Typical semiflows are those generated by a vector field: if the differential equation
\beqn
\frac{dy}{ds} = V(y) 
\quad\text{with}\quad
y(0) = x 
\eeqn
specified by the vector field~$V$ on~$X$ has a unique solution $y=y(x,s)$ for all $x\in X$, then $\phi^s(x) \equiv y(x,s)$ defines a semiflow. Note that $\frac{d}{ds}\phi^s(x)|_{s=0} = V(x)$.
Semiflows can also depend on time explicitly, as is the case with a time-dependent vector field: if the differential equation
\beqn
\frac{dy}{ds} = V(y,s) 
\quad\text{with}\quad
y(r) = x 
\eeqn
specified by the time-dependent vector field~$V(\slot,s)$ on~$X$ has a unique solution $y=y(x,r,s)$ for all $x\in X$ and all $r\ge 0$, then $\varphi^{r,s}(x) \equiv y(x,r,s)$ defines a time-dependent semiflow; see below. Note that $\frac{d}{ds}\varphi^{r,s}(x)|_{s=0} = V(x,r)$. 
Time-dependent semiflows on~$X$ become (ordinary) semiflows on~$X\times\bR_+$ simply by keeping track of the time-coordinate explicitly. 
More precisely, let~$\mathrm{p}_1$ and~$\mathrm{p}_2$ be the canonical projections from~$X\times\bR_+$ to~$X$ and~$\bR_+$, respectively. 
Then a semiflow $\phi^s:X\times\bR_+\to X\times\bR_+$, $s\in\bR_+$, having the property $\mathrm{p}_2(\phi^s(x,r)) = r+s$ defines the two-parameter family of maps $\varphi^{r,s} : X\to X : \varphi^{r,s}(x) \equiv \mathrm{p}_1(\phi^s(x,r))$, $(r,s)\in\bR_+^2$, having the characteristic property 
\beq\label{eq:nonauto_semiflow}
\varphi^{r,s+u} = \varphi^{r+s,u}\circ\varphi^{r,s}
\quad\text{with}\quad
\varphi^{r,0} = \mathrm{id}_X
\eeq
of time-dependent semiflows. Note that~\eqref{eq:nonauto_semiflow} is consistent with the following interpretation: given the state~$x$ of the system at time~$r$, $\varphi^{r,s}(x)$ is the state $s$ time units later. Conversely, given a time-dependent semiflow $\varphi^{r,s}$ on $X$, 
\beq\label{eq:semiflows}
\phi^s(x,r) \equiv (\varphi^{r,s}(x),r+s)
\eeq
determines a semiflow on $X\times\bR_+$.

\medskip
\begin{defn}\label{defn:continuous_QDS}
Let $X$ be a differentiable manifold and $\cM$ a collection of vector fields on~$X$ equipped with a topology. Let $n$ be a parameter taking values either in $\bZ_+$ or $\bR_+$. Consider the array
\beqn
\bfV  = \{V_n(\slot,r)\in\cM\ : \ 0\le r \le n,\  n\ge 0 \}
\eeqn
and assume that, for each $n\ge 0$, the time-dependent vector field $V_n(\slot,r)$ determines a time-dependent semiflow $\varphi_n^{r,s}$, $0\le r\le r+s\le n$. 
If there exists a piecewise-continuous curve $\gamma:[0,1]\to\cM$ such that
\beqn
\lim_{n \to\infty} V_n(\slot,nt) = \gamma_t\ , \quad t\in[0,1]\ ,
\eeqn
we say that $(\bfV,\gamma)$ is a \emph{continuous-time QDS}.
\end{defn}

\medskip
\begin{defn}
The nomenclature of Definition~\ref{defn:state_system_space} continues to apply in the continuous-time context. We also say that the array $\mathbf{\Phi} = \{\varphi_n^{r,s}\ : \ 0\le r\le r+s\le n,\  n\ge 0 \}$ is a \emph{quasistatic semiflow}.
\end{defn}

\medskip
The convergence condition in the definition means that in order to observe a change of a fixed order of magnitude in the vector field $V_n$ at the $n$th level of the array, one has to wait a time of order $n$. 
In the limit $n\to\infty$ the vector field changes infinitesimally slowly, yet traces out the curve $\gamma$ from beginning to end. For illustrative purposes only, here is a very simple example of a quasistatic semiflow:

\medskip
\begin{example}
Let $X$ be the unit circle in the complex plane, and let $\omega_0,\omega_1$ be two real numbers. Then $\varphi_n^{r,s}(x) = x\exp(i\omega_0 s + \frac i2 (\omega_1-\omega_0) s(s+2r)n^{-1})$ defines a time-dependent semiflow on $X$; see~\eqref{eq:nonauto_semiflow}. In fact, the vector field
\beqn
V_n(x,r) \equiv \frac{d}{ds}\varphi_n^{r,s}(x)\Big|_{s=0} = i\!\left(\omega_0 + (\omega_1-\omega_0) r n^{-1}\right)\! x
\eeqn
describes rotation at angular speed $\omega_0 + (\omega_1-\omega_0) r n^{-1}$. We have 
\beqn
\lim_{n\to\infty}V_n(x,nt) = i\!\left(\omega_0 + (\omega_1-\omega_0) t \right)\! x \equiv \gamma_t(x)\ , \quad t\in[0,1]\ .
\eeqn
Thus, this particular quasistatic semiflow describes a system whose angular speed on the circle changes infinitesimally slowly from~$\omega_0$ to~$\omega_1$.
\end{example}

\medskip
Again, one can readily come up with generalizations of the above definition (e.g.\ regarding the linear time-scaling $r=nt$), but we do not record them here. Let us, however, mention one class of models not quite falling under the definition provided. To compare with the earlier discrete-time case, set
\beqn
\varphi^{r,s}_n = T_{n,\round{r+s}}\circ\dots\circ T_{n,\round{r}+1} \ , \quad 0\le r\le r+s\le n \ .
\eeqn
Then~\eqref{eq:nonauto_semiflow} holds. Note that the convergence condition in Definition~\ref{defn:QDS} now becomes
\beqn
\varphi^{nt-1,1}_n = T_{n,\round{nt}} \to \gamma_t\ ,
\eeqn
which in this context makes more sense than the condition in Definition~\ref{defn:continuous_QDS}.

\medskip
Just as in the discrete-time case, it is interesting to study a quasistatic semiflow in terms of the limit behaviour of the random paths
\beqn
t\mapsto S_n(x,t) = \int_0^{nt} f(x_{n,s})\,\rd s \ ,
\eeqn
where
\beqn
x_{n,s} = \varphi^{0,s}_n(x) \ ,
\eeqn
as $n\to\infty$. A good statistical description of the QDS at issue entails proving limit laws for a large class of observables~$f$ and initial measures~$\mu$.


\medskip
\subsection{Preceding literature}
This work is philosophically a natural successor of the sequence of papers \cite{LasotaYorke_1996,OttStenlundYoung_2009,Stenlund_2011,StenlundYoungZhang_2013} on the analysis of time-dependent dynamical systems, although in scope it differs drastically. 
In \cite{LasotaYorke_1996}, ``asymptotic similarity'' of pushforward densities under sequences of (piecewise) expanding maps was established using transfer (Perron--Frobenius) operator techniques. 
Asymptotic similarity is an indication of statistical \emph{memory loss}: the system quickly forgets the distribution of its initial state. 
In~\cite{OttStenlundYoung_2009,Stenlund_2011,StenlundYoungZhang_2013}, a different approach for establishing memory loss for progressively more complicated time-dependent systems was developed based on the idea of \emph{coupling}. 
See~\cite{Young_1999,BressaudLiverani_2002,ChernovDolgopyat_2009,Chernov_2006} for implementations of coupling in deterministic dynamics, and \cite{Lindvall_2002} for an introduction to the probability theory of coupling. 
Let us also mention~\cite{GuptaOttTorok_2013,MohapatraOtt_2014}, which use yet another, Hilbert projective metric, technique for a time-dependent system, as well as~\cite{Romain_etal_2014}. The source of memory loss in the systems of the preceding papers is sensitive dependence on initial conditions (aka\ chaos). 
In the other extreme, memory loss is also produced by sinks; the  references closest to the time-dependent setup we have been able to find concern~\emph{random sinks}~\cite{LeJan_1985,Baxendale_1992}. In addition to memory-loss issues there are works including~\cite{KolyadaSnoha_1996,KolyadaMisiurewiczSnoha_1999,Zhu_etal_2012,Kawan_2013} on the entropy of time-dependent dynamical systems.

Besides the ones mentioned, few limit laws have been proven on the properties of non-random time-dependent dynamical systems. (On the contrary, a vast literature~---~which we cannot even begin to cover here~---~exists on random dynamical systems, concerning both averaged/annealed and quenched limit theorems.) 
In special cases, some central limit theorems have been obtained~\cite{Bakhtin_1994,ConzeRaugi_2007,Nandori_2012}. Indeed, one of the difficulties one faces in this setting is that it is often not even clear how possible results should be formulated, let alone proven. 
 
In our proof, a key ingredient is rapid memory loss, which we establish via coupling. In order to identify the limit process, we solve a well-posed martigale problem \cite{StroockVaradhan,RogersWilliams_Vol2,Durrett_1996}. (See Section~\ref{sec:outline} for an outline of the proof). 
In the context of dynamical systems, the idea of resolving to a martingale problem has been used (sparingly) in the theory of averaging:  first, to our knowledge, in~\cite{Dolgopyat_2005}, and then in~\cite{DeSimoiLiverani_2014}, which is closest to our work.
Averaging is a tool in the analysis of so-called slow--fast systems concerning the limit where one of two variables evolves infinitely fast compared to the other: in order to describe the evolution of the \emph{slow} variable, one may be able to ``average out'' the influence of the fast one and thus reduce the problem to an effective one in which the slow variable alone appears. 
We stress that the abstract setup in Section~\ref{sec:motivation} is situated rather at the other end of the spectrum: it involves studying the fast variable~$x_{n,k}$, which (in the limit $n\to\infty$) describes the state of the observed system~$\fs$ under the influence of the large, slow, system $\fS$. Let us mention, however, that averaging ideas could be used to study the statistical properties of QDSs in certain situations, regarding the normalized Birkhoff sum $n^{-\frac12}S_n(x,t)$ (along with an index) as a slow variable and $x$ as a fast variable.


\medskip
\subsection{How the paper is organized}
The model QDS, considered in the rest of the paper, is presented in Section~\ref{sec:model}.
In Section~\ref{sec:results} we state our two main results, Theorems~\ref{thm:mean} and~\ref{thm:fluctuations}, keeping the technical prerequisites down to a minimum. 
In Section~\ref{sec:outline} we attempt to outline the proofs in a non-technical manner. In Section~\ref{sec:notation} we explain frequently used notation and record key definitions. 
We then introduce in Sections~\ref{sec:preliminariesI} and~\ref{sec:preliminariesII} the preliminaries necessary for understanding the proofs of Theorems~\ref{thm:mean} and~\ref{thm:fluctuations}, which are presented in Sections~\ref{sec:mean} and~\ref{sec:fluctuations}, respectively. 
In Section~\ref{subsec:generalizations} we also state generalizations of the main results, whose proofs are outlined in Section~\ref{sec:generalizations_proofs}; the proofs involve minor modifications of the preceding sections, so we only indicate the necessary changes, with the hope that such a choice makes the paper easier to read.

\medskip
\section{The model} \label{sec:model}
In this section we introduce the quasistatic dynamical system to be studied in the rest of the paper. 
Fix $\lambda>1$ and $A_*>0$ once and for all.
Let $\cM$ denote the set of $C^2$ expanding maps $T:\bS\to\bS$ on the circle with the following bounds:
\beqn
\inf T' \ge \lambda
\quad\text{and}\quad
\| T''\|_\infty \le A_*\ , \qquad T\in\cM\ .
\eeqn
The space~$\cM$ is endowed with the metric $d_{C^1}$ defined by 
\beqn
d_{C^1}(T_1,T_2) = \sup_{x\in\bS} d(T_1 x,T_2 x) + \|T_1'-T_2'\|_\infty 
\eeqn
for $T_1,T_2\in\cM$. Here $d$ is the natural metric on $\bS = \bR/\bZ$.

\medskip
We construct a QDS with state space $\bS$ and system space $\cM$ as follows. First, fix a H\"older continuous curve $\gamma:[0,1]\to \cM$ with exponent $\eta\in(0,1)$.
 Let 
$\bfT$ be a triangular array of maps
$$\bfT= \{T_{n,k}\in\cM\ :\ 0\le k\le n, \ n\ge 1\}$$ 
for which
\beq \label{eq:rate}
\sup_{n\ge 1} n^\eta \sup_{0\le t\le 1} d_{C^1}(T_{n,\round{nt}},\gamma_t) < \infty\  .
\eeq
Clearly $(\bfT, \gamma)$ meets the requirements of Definition~\ref{defn:QDS} of a QDS.
A prototypical example to keep in mind is where $T_{n,k} = \gamma_{kn^{-1}}$, though the maps $T_{n,k}$ are not required to live on the curve $\gamma$.
The convergence rate~\eqref{eq:rate} is chosen to reflect the smoothness of the curve; in particular we have similar bounds on $d_{C^1}(T_{n,k}, \gamma_{kn^{-1}})$ and on $d_{C^1}(\gamma_{kn^{-1}}, \gamma_{(k+1)n^{-1}})$, resulting in the bound 
\beq \label{eq:T_rate}
\sup_{n\ge 1} n^\eta \sup_{0\le k <n} d_{C^1}(T_{n,k}, T_{n,k+1}) < \infty\  .
\eeq
 Stronger convergence would not be a natural assumption.

For a fixed~$n$, the maps~$T_{n,k}$ approximate the curve~$\gamma$, traversing from beginning to end as~$k$ increases from~$1$ to~$n$. Moreover,~$T_{n,\round{nt}}$ tends to the well-defined limit $\gamma_t$ as $n\to\infty$.

\medskip
For future use, we point out that every $T\in\cM$ has a unique invariant probability measure $\hat\mu_T$ equivalent to the Lebesgue measure~$\fm$ on~$\bS$. The measure $\hat\mu_T$ we sometimes call an \emph{SRB~measure}, for Sinai--Ruelle--Bowen. For brevity, we write
\beqn
\hat\mu_{t} = \hat\mu_{\gamma_t}\quad\text{and}\quad \hat\mu_{n,k} = \hat\mu_{T_{n,k}}\ .
\eeqn



\medskip
\section{Results}\label{sec:results}
We begin our study of the quasistatic dynamical system $(\bfT,\gamma)$ introduced in Section~\ref{sec:model}. The goal is to understand statistical properties of the QDS. 
So let $f:\bS\to\bR$ be an observable, and denote
\beq\label{eq:f}
f_{n,k} = f\circ T_{n,k}\circ\dots\circ T_{n,1}\ , \quad 0\le k \le n\ .
\eeq
(Our convention is that $f_{n,0} = f$.) This yields a triangular array of random variables once an initial distribution~$\mu$ on $\bS$ is given. 
We define the functions $S_n:\bS\times [0,1]\to\bR$ by
\beqn
S_n(x,t) = \int_0^{nt} f_{n,\round{s}}(x)\,\rd s = \sum_{k=0}^{\round{nt}-1}  f_{n,k}(x) + \fract{nt}f_{n,\round{nt}}(x)\ , \quad n\ge 1\ .
\eeqn

\medskip
\subsection{The mean}
A natural quantity to study first is the mean
\beq\label{eq:zeta}
\zeta_n(x,t) = n^{-1} S_n(x,t)\ .
\eeq
Given an initial probability measure $\mu$ on $\bS$, each $\zeta_n$ is a random element of $C^0([0,1],\bR)$, whose distribution we denote~$\bfP^\mu_n$. As is customary, we also denote $\hat\mu_t(f) = \int f \, \rd\hat\mu_t$, etc.

\medskip
\begin{thm}\label{thm:mean}
Suppose~$f$ is Lipschitz continuous and~$\mu$ is absolutely continuous. The function $t\mapsto \hat\mu_t(f)$ is continuous. The measures $\bfP^\mu_n$ converge weakly, as $n\to\infty$, to the point mass at $\zeta\in C^0([0,1],\bR)$, where
\beq\label{eq:zeta_limit}
\zeta(t) = \int_0^t \hat\mu_s(f)\,\rd s\ .
\eeq
\end{thm}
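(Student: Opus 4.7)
\emph{Proof plan.}
The continuity of $t\mapsto\hat\mu_t(f)$ follows from H\"older continuity of $\gamma$ combined with statistical stability of SRB measures on~$\cM$: the transfer operator $\cL_T$ of any $T\in\cM$ satisfies a uniform Lasota--Yorke inequality and admits a spectral gap on the space of functions of bounded variation, uniformly for $T\in\cM$. Standard perturbation theory then makes $T\mapsto \hat\mu_T$ continuous with respect to~$d_{C^1}$, and hence $t\mapsto \hat\mu_t(f)$ is continuous since $f$ is bounded and Lipschitz.

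For the weak convergence, tightness is automatic: because $|f_{n,k}(x)|\le \|f\|_\infty$, the random paths $t\mapsto \zeta_n(x,t)$ are uniformly Lipschitz in~$t$ with constant~$\|f\|_\infty$, independently of~$x$ and~$n$, so $\{\bfP^\mu_n\}$ is relatively compact in $C^0([0,1],\bR)$ by Arzel\`a--Ascoli. Since the claimed limit is the point mass~$\delta_\zeta$, weak convergence reduces to showing $\zeta_n(\cdot,t)\to \zeta(t)$ in $\mu$-probability for every fixed $t\in[0,1]$, which I establish through a mean and a variance estimate.

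The core input is memory loss in the inhomogeneous setting: for any bounded-variation initial density, the pushforward under $T_{n,k}\circ\dots\circ T_{n,1}$ is exponentially close in BV to $\hat\mu_{n,k}$, uniformly along the array, while \eqref{eq:T_rate} controls the drift of $\hat\mu_{n,k}$ toward $\hat\mu_{k/n}$. Writing $\phi_{n,k}$ for the density of $\mu_{n,k}=(T_{n,k}\circ\dots\circ T_{n,1})_*\mu$ (smoothed by a short $O(\log n)$ burn-in when $\mu$ has only~$L^1$ density), this yields $\int f\,\phi_{n,k}\,\rd\fm = \hat\mu_{k/n}(f) + o(1)$, so that $\E_\mu[\zeta_n(\cdot, t)] = n^{-1}\sum_k \hat\mu_{k/n}(f) + o(1)$ converges to $\zeta(t)$ as a Riemann sum, using continuity of $s\mapsto \hat\mu_s(f)$. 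For the variance, the identity $\E_\mu[f_{n,k}\mid x_{n,j}](y) = f\circ T_{n,k}\circ\dots\circ T_{n,j+1}(y)$ together with transfer operator duality rewrites the covariance as $\int f\cdot \cL_{n,k}\cdots\cL_{n,j+1}\bigl((f-\mu_{n,j}(f))\,\phi_{n,j}\bigr)\,\rd\fm$, which by the uniform spectral gap decays like $\sigma^{k-j}$ for some $\sigma<1$ independent of~$n$. Hence $\mathrm{Var}_\mu(\zeta_n(\cdot, t)) = O(1/n)$ and $\zeta_n(\cdot,t)\to \zeta(t)$ in~$L^2(\mu)$, a fortiori in probability.

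The main obstacle is to make the memory-loss statement quantitative and uniform along the full array, and in particular to accommodate arbitrary absolutely continuous initial measures rather than just BV densities. The required ingredients are uniform Lasota--Yorke estimates and a uniform spectral gap for~$\cL_T$, $T\in\cM$, together with the sequential coupling arguments of~\cite{LasotaYorke_1996,Stenlund_2011}; once these are in place, \eqref{eq:rate} controls how closely $T_{n,k}$ tracks $\gamma_{k/n}$ within short blocks, and the mean/variance computation combined with tightness finishes the proof.
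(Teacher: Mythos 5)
Your overall architecture — tightness from the uniform Lipschitz bound on paths together with convergence of finite-dimensional distributions proven by a mean and variance estimate — is a valid alternative to the paper's route, and in some ways more elementary. The paper instead extracts subsequential weak limits $\bfP$ and proves a Dynkin formula $\frac{d}{dt}\bfE[A\circ\pi_t]=\bfE[A'\circ\pi_t]\,\hat\mu_t(f)$ for the limit measure (Lemma~\ref{lem:Dynkin1}), then computes $\frac{d}{dt}\bfE[|\pi_t-\zeta(t)|^2]=0$ directly. This choice is mainly one of architecture: the Dynkin-formula machinery is reused almost verbatim in the harder martingale problem behind Theorem~\ref{thm:fluctuations}, whereas your second-moment argument, while simpler for Theorem~\ref{thm:mean} alone, would not scale to the fluctuation result. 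Your use of BV Lasota--Yorke and spectral gap for decorrelation and statistical stability is likewise a legitimate substitute for the paper's $L^1$-based coupling estimates (Lemma~\ref{lem:memory_loss}, Corollary~\ref{cor:memory_bounds}, Lemma~\ref{lem:transfer_pert}); the paper even notes that Keller--Liverani perturbation theory could be used for the continuity of $t\mapsto\hat\mu_t(f)$ but gives a self-contained $L^1$ argument.

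The genuine gap is the passage from regular initial densities to arbitrary absolutely continuous~$\mu$. Your proposed remedy, an ``$O(\log n)$ burn-in'' to smooth a merely $L^1$ density, does not work: the transfer operators $\cL_{n,k}$ map $L^1$ to $L^1$ but do not send a general $L^1$ density into BV (or any regular class) after any finite number of iterations, and the decorrelation constants you rely on necessarily depend on the regularity of the initial density. The paper resolves this with an approximation-plus-portmanteau argument (Lemma~\ref{lem:approx} and Section~\ref{sec:meanProof}): given a bounded continuous test functional $F$ with $\sup|F|=M$, one picks a density $\rho\in\cup_L\cD_L$ with $\|\psi-\rho\|_{L^1}\le \ve/(2M)$ and notes that $|\bfE^\nu_n[F]-\bfE^\mu_n[F]|\le M\|\psi-\rho\|_{L^1}$ \emph{uniformly in $n$}, since $\zeta_n(x,\cdot)$ is determined pointwise by $x$; this transfers the conclusion from $\rho$ to $\psi$. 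You flag arbitrary $L^1$ densities as ``the main obstacle'' but then only re-list the machinery for the regular case; the $L^1$-approximation step is what is actually needed, and it is cheap once one realizes the relevant test functionals are bounded and that the paths depend deterministically on the initial point.
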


\medskip
\begin{remark}
Note that the limit distribution in Theorem~\ref{thm:mean} is independent of $\mu$.
\end{remark}

\medskip
In other words, given an arbitrary initial measure $\mu$ having a density, the stochastic process $\zeta_n$ converges, as $n\to\infty$, to the non-random limit~$\zeta$.

\medskip
\subsection{Fluctuations about the mean}
The mean gives a coarse description of a limit statistical property. 
Deeper insight is obtained by studying the fluctuations at a finer scale, by looking into the statistical properties of~$n^\frac12\zeta_n$ instead of~$\zeta_n$. For this to make sense, a centering is needed.
To this end, we choose a sequence of functions $c_n:\bS\times [0,1]\to\bR$, and define the functions~$\chi_n:\bS\times [0,1]\to\bR:$
\beq\label{eq:chi}
\chi_n(x,t) = n^\frac12\zeta_n(x,t) - n^\frac12 c_n(x,t)
\eeq
quantifying the fluctuations of $n^\frac12\zeta_n$ about $n^\frac12 c_n$. The goal is to describe the statistics of these fluctuations, as $n\to\infty$, by a probabilistic limit law.

The choice of a good centering sequence $(c_n)_{n\ge 1}$ turns out to be a delicate issue. There are two canonical choices, both independent of $x$, namely $c_n(t) = \mu(\zeta_n(\slot,t))$ (where~$\mu$ is the initial measure) and $c_n(t) = \fm(\zeta_n(\slot,t))$ (where~$\fm$ is the Lebesgue measure). The first one of these amounts to centering by force: $\mu(\chi_n(\slot,t)) = 0$ holds for all~$t\in[0,1]$.
It is perhaps surprising that this ``natural'' choice does \emph{not} appear to yield good statistics for~$\chi_n$ unless the initial measure~$\mu$ has a sufficiently regular density. The second choice, centering using the Lebesgue measure instead, works better: in that case we obtain a universal limit law for $\chi_n$ as long as the initial measure~$\mu$ is just \emph{absolutely continuous}. We now introduce the notion of an admissible centering sequence, for which our main result holds:

\medskip
\begin{defn}\label{defn:admissible}
We say that a centering sequence $(c_n)_{n\ge 1}$ of functions $c_n:\bS\times [0,1]\to\bR$ is \emph{admissible} with respect to an initial probability measure~$\mu$, if (i) $c_n(x,\slot)$ is continuous for almost every $x$ w.r.t.~$\mu$ and (ii) $t\mapsto n^\frac12c_n(x,t)-n^\frac12\fm(\zeta_n(\slot,t))$ viewed as a random element of~$C^0([0,1],\bR)$ converges to zero in probability w.r.t.~$\mu$, i.e.,
\beqn
\lim_{n\to\infty}\mu\biggl(\biggl\{x\in\bS\ :\ \sup_{t\in[0,1]}\bigl|n^\frac12c_n(x,t)-n^\frac12\fm(\zeta_n(\slot,t))\bigr|>\delta\biggr\}\biggr)  = 0 
\eeqn
for any $\delta>0$. For brevity, we also say in this case that~$c_n$ is admissible (w.r.t.~$\mu$).
\end{defn}

\medskip
\begin{remark}\label{rem:admissible}
If the centering $c_n$ is independent of~$x$, there is no need to refer to an initial measure: $c_n$ is admissible either with respect to every measure or no measure at all; it is admissible if and only if
\beq\label{eq:admissible_indep_x}
\lim_{n\to\infty}\sup_{t\in[0,1]}\bigl|n^\frac12c_n(t)-n^\frac12\fm(\zeta_n(\slot,t))\bigr| = 0\ .
\eeq
Clearly $c_n(t) = \fm(\zeta_n(\slot,t))$ is admissible. 
Since $\fm(\zeta_n(\slot, t))$ converges to~$\zeta(t)$, so does~$c_n(t)$ for any admissible~$c_n$. Whether $n^{\frac12}\fm(\zeta_n(\slot, t))$ converges to $n^{\frac12} \zeta(t)$ (i.e., whether~$\zeta(t)$ is admissible) seems to depend on the regularity of the curve $\gamma$, see Lemma~\ref{lem:admissible}(ii). 
On the other hand, given a measure~$\nu$ (perhaps different from the initial measure~$\mu$), $c_n(t) = \nu(\zeta_n(\slot,t))$ may or may not be admissible, depending on the regularity of (the density of)~$\nu$. 
\end{remark}

\medskip
The next lemma establishes admissible centering sequences of the preceding kind under different conditions, and sheds light on the role of the function~$\zeta$ defined in~\eqref{eq:zeta_limit}.
\begin{lem}\label{lem:admissible}
(i) If $\eta\in(0,1)$ is arbitrary and~$\nu$ is an arbitrary probability measure having a Lipschitz continuous density, then the centering~$c_n(t) = \nu(\zeta_n(\slot,t))$ is admissible.

\smallskip
\noindent (ii) If $\eta>\frac12$, then the \emph{explicit centering}
\beqn
c_n(t) = \zeta(t)
\eeqn
is admissible. Here~$\zeta$ is the function defined in~\eqref{eq:zeta_limit}.

\smallskip
\noindent (iii) If $\eta\in(0,1)$ is arbitrary and~$\nu$ is an arbitrary absolutely continuous measure,\linebreak then $\nu(\zeta_n(\slot,t))$ tends to $\zeta(t)$, as $n\to\infty$. The convergence is uniform in~$t$.
\end{lem}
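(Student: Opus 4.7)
My plan rests on two tools that will be established in the preliminaries for the model of Section~\ref{sec:model}: \textbf{(a)} exponential memory loss of the form $|\nu_1(g\circ Q_{n,k}) - \nu_2(g\circ Q_{n,k})| \leq C\,\|g\|_{\mathrm{Lip}}\,\theta^k$, valid uniformly in the array~$\bfT$ for any Lipschitz~$g$ and any absolutely continuous probability measures~$\nu_1,\nu_2$ with Lipschitz densities (with $Q_{n,k} = T_{n,k}\circ\cdots\circ T_{n,1}$ and some $\theta\in(0,1)$); and \textbf{(b)} H\"older continuity of $T\mapsto\hat\mu_T(g)$ on $(\cM,d_{C^1})$ for Lipschitz~$g$, a standard transfer-operator perturbation result on the class~$\cM$. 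These inputs will carry all three parts; the bookkeeping differs in each case.

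\emph{Proof of (i).} Since $c_n$ is $x$-independent, Remark~\ref{rem:admissible} reduces the claim to $\sup_{t\in[0,1]} n^\frac12|\nu(\zeta_n(\slot,t)) - \fm(\zeta_n(\slot,t))| \to 0$. Expanding
\beqn
\nu(\zeta_n(\slot,t)) - \fm(\zeta_n(\slot,t)) = n^{-1}\sum_{k=0}^{\round{nt}-1}[\nu(f_{n,k}) - \fm(f_{n,k})] + O(n^{-1}),
\eeqn
and applying~\textbf{(a)} with $g=f$, each bracket is at most $C\|f\|_{\mathrm{Lip}}\theta^k$, so the geometric sum is uniformly bounded. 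The difference is $O(n^{-1})$, and multiplication by $n^\frac12$ gives $O(n^{-\frac12})$.

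\emph{Proof of (ii).} Again by Remark~\ref{rem:admissible} I must show $\sup_t n^\frac12|\fm(\zeta_n(\slot,t)) - \zeta(t)| \to 0$, via three chained approximations. First, $\fm(f_{n,k}) = (Q_{n,k})_*\fm(f)$, and an adiabatic estimate combining~\textbf{(a)} with stepwise $C^1$-perturbation of the transfer operator yields $|\fm(f_{n,k}) - \hat\mu_{n,k}(f)| = O(\theta^k) + O(n^{-\eta})$. Second, \textbf{(b)} together with $d_{C^1}(T_{n,k},\gamma_{k/n}) = O(n^{-\eta})$ from~\eqref{eq:rate} gives $|\hat\mu_{n,k}(f) - \hat\mu_{k/n}(f)| = O(n^{-\eta})$. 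Third, $s\mapsto\hat\mu_s(f)$ is $\eta$-H\"older (by \textbf{(b)} and the H\"older hypothesis on~$\gamma$), so the Riemann-sum error between $n^{-1}\sum_{k=0}^{\round{nt}-1}\hat\mu_{k/n}(f)$ and $\int_0^t\hat\mu_s(f)\,\rd s$ is $O(n^{-\eta})$, uniformly in~$t$. Summing the contributions yields $\sup_t|\fm(\zeta_n(\slot,t)) - \zeta(t)| = O(n^{-\eta})$, so the factor $n^\frac12$ produces $O(n^{\frac12-\eta})$, which vanishes precisely when $\eta>\frac12$.

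\emph{Proof of (iii).} I argue by density. Given absolutely continuous~$\nu$ with density~$\rho$ and $\varepsilon>0$, choose a Lipschitz probability density~$\rho_\varepsilon$ (by mollification) with $\|\rho-\rho_\varepsilon\|_{L^1}<\varepsilon$, and let $\nu_\varepsilon = \rho_\varepsilon\,\rd\fm$. Since $|\zeta_n(x,t)|\leq\|f\|_\infty$ pointwise, $\sup_t|\nu(\zeta_n(\slot,t)) - \nu_\varepsilon(\zeta_n(\slot,t))| \leq \varepsilon\|f\|_\infty$. By part~(i) applied to~$\nu_\varepsilon$, $\sup_t|\nu_\varepsilon(\zeta_n(\slot,t)) - \fm(\zeta_n(\slot,t))| \to 0$, and Theorem~\ref{thm:mean} gives $\sup_t|\fm(\zeta_n(\slot,t)) - \zeta(t)| \to 0$. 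Sending first $n\to\infty$ and then $\varepsilon\to 0$ yields the asserted uniform convergence. The chief technical obstacle is the adiabatic estimate used in~(ii): verifying that $(Q_{n,k})_*\fm$ tracks the frozen SRB measure $\hat\mu_{n,k}$ with $n^{-\eta}$ accuracy once the $O(\theta^k)$ transient has decayed. This is where the regularity of~$\gamma$ enters decisively, and where the threshold $\eta>\frac12$ becomes sharp under this approach; everything else is bookkeeping of geometric series and Riemann-sum errors once~\textbf{(a)} and~\textbf{(b)} are in hand.
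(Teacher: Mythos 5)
Your proof follows essentially the same route as the paper's. Part~(i) is identical: write the centering difference as an integral of $|\nu(f_{n,\round{ns}})-\fm(f_{n,\round{ns}})|$, bound each integrand via exponential memory loss, and observe the resulting geometric integral is $O(n^{-1})$. Part~(ii) is the paper's argument repackaged as a three-step chain; the only caution is that the paper's SRB-regularity estimate (Lemma~\ref{lem:SRB_parameter_regularity}) gives $\|\hat\rho_t-\hat\rho_s\|_{L^1}\le C|t-s|^{\eta'}$ only for $\eta'<\eta$ --- the exponent $\eta$ itself is not attained --- and similarly the ``adiabatic'' bound $|\fm(f_{n,k})-\hat\mu_{n,k}(f)|=O(\vartheta^k)+O(n^{-\eta})$ that you posit is not literally achievable term-by-term: the telescoping along the array produces an extra $k^{1+\eta}$ (or $\log n$) factor which must be killed by truncating the telescope at $K\sim\log n$, exactly as in Lemma~\ref{lem:pushforward_vs_SRB}. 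This costs a factor $n^{-1/2}\log n$ plus a replacement of $\eta$ by $\eta'$; since any $\eta'\in(\tfrac12,\eta)$ works, the conclusion $\eta>\tfrac12$ is unaffected, but the clean $O(n^{-\eta})$ you claim should really read $O(n^{-\eta'})$. Part~(iii) takes a mild detour: the paper bounds $\sup_t|\nu(\zeta_n(\slot,t))-\zeta(t)|$ directly via the pushforward-to-SRB estimate~\eqref{eq:pushforward_SRB_int} after an $L^1$ density approximation, whereas you route through Theorem~\ref{thm:mean}; your route is valid since $\zeta_n$ is uniformly bounded and uniformly Lipschitz in~$t$ (so weak convergence of $\bfP^\fm_n$ to $\delta_\zeta$ plus equicontinuity gives the uniform convergence of the means), but the paper's direct estimate is more economical and avoids importing the full theorem where a single lemma suffices.
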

Lemma~\ref{lem:admissible} is proven in Section~\ref{sec:admissible}.

\medskip
We are now ready to proceed to the main result of the paper. Given an initial probability measure~$\mu$ on~$\bS$ and an admissible centering sequence~$(c_n)_{n\ge 1}$, each~$\chi_n$ is a random element of~$C^0([0,1],\bR)$, whose distribution we denote~$\bP^\mu_n$. We also denote
\beqn
\hat f_t = f - \hat\mu_t(f)
\eeqn
and
\beq\label{eq:sigma}
\hat\sigma_t^2(f) = \lim_{m\to\infty} \hat\mu_t\!\left[\left(\frac{1}{\sqrt m}\sum_{k=0}^{m-1} \hat f_t \circ \gamma_t^k\right)^2\, \right] \ ,
\eeq
which is just the limit of the variance of $\frac{1}{\sqrt m}\sum_{k=0}^{m-1} f\circ \gamma_t^k$ with respect to the measure~$\hat\mu_t$.

\medskip
\begin{thm}\label{thm:fluctuations}
Suppose~$f$ is Lipschitz continuous,~$\mu$ is absolutely continuous, and~$(c_n)_{n\ge 0}$ is admissible. The function $t\mapsto \hat\sigma_t^2(f)$ is continuous. The measures $\bP^\mu_n$ converge weakly, as $n\to\infty$, to the law of the process
\beqn
\chi(t) = \int_0^t \hat\sigma_s(f)\,\rd W_s\ .
\eeqn
Here $W$ is a standard Brownian motion, and the stochastic integral is to be understood in the sense of It\={o}.
\end{thm}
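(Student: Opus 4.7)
My plan is to adapt the martingale-problem approach of Stroock and Varadhan. Because $\hat\sigma_s^2(f)$ is continuous in $s$ (continuity is itself part of the claim to be proven) and depends on $s$ alone, the martingale problem asking that, for every $\phi\in C_b^2(\bR)$,
$$M_t^\phi := \phi(\chi(t)) - \phi(\chi(0)) - \tfrac12\int_0^t \hat\sigma_s^2(f)\,\phi''(\chi(s))\,\rd s$$
be an $\cF_t$-martingale is well posed, with unique solution $\chi(t)=\int_0^t \hat\sigma_s(f)\,\rd W_s$. It therefore suffices to show (a) tightness of $\{\bP_n^\mu\}$ in $C^0([0,1],\bR)$, and (b) that every weak limit point of this family solves the martingale problem.

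For (a) I would establish a Kolmogorov-type increment bound
$$\E^\mu\bigl[(\chi_n(t_2)-\chi_n(t_1))^4\bigr]\le C(t_2-t_1)^2,\quad t_2-t_1\ge n^{-1},$$
by expanding the fourth power of the normalized Birkhoff-type sum $n^{-1/2}\sum_{k=\lfloor nt_1\rfloor}^{\lfloor nt_2\rfloor-1}\hat f_{n,k}$ and using exponential decay of multi-time correlations for non-stationary compositions of expanding maps, available via the coupling/memory-loss machinery of \cite{OttStenlundYoung_2009,Stenlund_2011,StenlundYoungZhang_2013}. The contribution of the centering term $n^{1/2}(c_n(x,t_2)-c_n(x,t_1))$ is then handled by admissibility combined with the analogous estimate applied to the Lebesgue-centered sum.

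For (b), given $0\le t_1<t_2\le 1$, $\phi\in C_b^2$, and a bounded continuous functional $g$ of $\chi\big|_{[0,t_1]}$, I would Taylor-expand
$$\phi(\chi_n(t_2))-\phi(\chi_n(t_1)) = \phi'(\chi_n(t_1))\,\Delta_n + \tfrac12\phi''(\chi_n(t_1))\,\Delta_n^2 + R_n,$$
with $\Delta_n:=\chi_n(t_2)-\chi_n(t_1)$, and show three limits: $\E^\mu[g\,R_n]\to 0$, $\E^\mu[g\,\phi'(\chi_n(t_1))\,\Delta_n]\to 0$, and $\E^\mu[g\,\phi''(\chi_n(t_1))\,\Delta_n^2]\to \E^\mu[g\,\phi''(\chi(t_1))\int_{t_1}^{t_2}\hat\sigma_s^2(f)\,\rd s]$. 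The first cross-term vanishes because memory loss pushes the conditional distribution of $x_{n,\lfloor nt_1\rfloor}$ close to $\hat\mu_{t_1}$, against which $\hat f$ integrates to zero, and any residual discrepancy is exactly what admissibility absorbs. For the decisive variance term I would partition $[\lfloor nt_1\rfloor,\lfloor nt_2\rfloor)$ into sub-blocks of length $L_n$ with $1\ll L_n\ll n$; on the block around time $ns$, the maps $T_{n,k}$ are uniformly $d_{C^1}$-close to $\gamma_s$ by \eqref{eq:T_rate} and H\"older continuity of $\gamma$, so a quantitative variance estimate for the single expanding map $\gamma_s$ yields a block contribution $\hat\sigma_s^2(f)\cdot L_n/n$, while memory loss between blocks makes cross-block covariances negligible. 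Summation then produces the Riemann sum converging to $\int_{t_1}^{t_2}\hat\sigma_s^2(f)\,\rd s$.

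The principal obstacle is the uniform control needed in this variance step: one has to develop a quantitative perturbation theory for the transfer operators of maps in $\cM$ (in the spirit of Keller–Liverani) to obtain continuous dependence of $\hat\mu_t$ and $\hat\sigma_t^2(f)$ on $t$ with explicit rates, strong enough to beat both the $O(n^{-\eta})$ drift of the maps across a sub-block and the errors in approximating $\mu_{n,k}$ by $\hat\mu_{k/n}$ after $L_n$ steps. Once these inputs are in place---several of which already appear, in related form, in the memory-loss literature cited above---the tightness bound and the martingale identification combine to give $\bP_n^\mu\Rightarrow$ law of $\chi$, proving Theorem~\ref{thm:fluctuations}.
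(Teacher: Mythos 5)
Your outline---tightness via a Kolmogorov-type fourth-moment bound, then identification of every weak limit point as the unique solution of the well-posed martingale problem generated by $\mathscr{L}_t = \tfrac12\hat\sigma_t^2(f)\,d^2/dx^2$---coincides with the paper's strategy. The block decomposition you propose for the quadratic-variation step (sub-blocks of length $L_n$ with $1\ll L_n\ll n$, frozen map $\gamma_s$ on each block, cross-block covariances killed by memory loss) is a legitimate variant of the paper's direct double-integral estimate of $\mu\!\left[(\xi_n(t+h)-\xi_n(t))^2\right]$; the two amount to the same computation with different bookkeeping, and you correctly identify the transfer-operator perturbation bounds needed to control the drift of the maps within a block.

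The genuine gap lies in step (b), the martingale identification. You must show $\E^\mu\!\left[g\cdot(M_{t_2}-M_{t_1})\right]\to 0$ for $g$ a bounded Lipschitz functional of the path on $[0,t_1]$, but you supply no mechanism for decoupling $g$ from the future increment. The assertion that ``memory loss pushes the conditional distribution of $x_{n,\lfloor nt_1\rfloor}$ close to $\hat\mu_{t_1}$'' does not do the work: the QDS is deterministic, so conditional on the full past trajectory (which determines $g$) that distribution is a point mass, and the multi-time decorrelation you cite bounds products of observables at fixed discrete times, not general path functionals. The paper bridges this with a specific device: it partitions $\bS$ into the monotonicity branches $I_{z,n,t_1,j}$ of $T_{n,\lfloor nt_1\rfloor}\circ\cdots\circ T_{n,1}$, so that on each element the normalized partial sums up to time $nt_1$ (hence $g$) are constant up to $O(n^{-1/2})$, while the pushforward of $\mu$ conditioned on each element lies in a uniformly controlled density class $\cD_{L_*}$; one then factors the expectation over these elements and computes the conditional increment moments against densities in $\cD_{L_*}$. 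Without this (or an equivalent) device your ``cross-term vanishes'' and ``variance term converges'' claims do not follow from the memory-loss estimates alone. A secondary organizational point: the paper first proves the result for $\rho\in\cup_L\cD_L$ with the special centering $c_n(t)=\mu(\zeta_n(\slot,t))$, then passes to general absolutely continuous $\mu$ and admissible $c_n$ by a portmanteau argument using $L^1$-density of $\cup_L\cD_L$; your merged treatment implicitly needs the same two-stage reduction and approximation lemma.
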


\medskip
Let us pause to discuss Theorem~\ref{thm:fluctuations} in conjunction with Lemma~\ref{lem:admissible}. Here $\nu$ is an arbitrary measure having a Lipschitz continuous density. 
For every value of the regularity exponent~$\eta\in(0,1)$ of the curve~$\gamma$, the \emph{implicit} centering~$c_n(t)=\nu(\zeta_n(\slot,t))$ is a valid choice; in particular, $c_n(t)=\fm(\zeta_n(\slot,t))$ can always be used, and if the initial measure~$\mu$ has a Lipschitz continuous density (some hypothesis on the density is necessary, see Example~\ref{ex:centering}), also the choice $c_n(t) = \mu(\zeta_n(\slot,t))$ is admissible. 
The special case~$\eta<\frac12$ is interesting from the point of view that~$\gamma$ is allowed to have the degree of regularity of Brownian paths. (With probability one, the path of a Brownian motion is H\"older continuous with any exponent~$<\frac12$.) Thus, we may think in this case that the driving ambient system forces the observed system $\gamma_t$ to perform a Brownian motion in the space~$\cM$. On the other hand, the case~$\eta>\frac12$ of a
  more regular curve~$\gamma$ allows also for the \emph{explicit centering} $c_n(t) = \zeta(t)$ to be used. The reason for this dichotomy is that the order of~$n^\frac12 \nu(\zeta_n(\slot,t)) - n^\frac12\zeta(t)$ appears to be so large as to  contribute to the limit process, if the regularity exponent~$\eta$ of the curve~$\gamma$ is $\frac12$ or less. 
A similar remark concerns the order of~$n^\frac12 \mu(\zeta_n(\slot,t))-n^\frac12 \nu(\zeta_n(\slot,t))$, if the density of~$\mu$ is not sufficiently regular. Let us illustrate the last point with an example:

\medskip
\begin{example}[An inadmissible centering] \label{ex:centering}
    Let us return to the setting of Example~\ref{ex:degenerate}, so $T = T_{n,k} : x \mapsto 2x \pmod 1$ for all $n,k$, and $f(x)= -1$ for $x \in [0,\frac12)$ and $f(x) = +1$ otherwise. Then $\fm(\zeta_n(\slot,t)) = 0$ for all $n, t$. 
    Taking as our centering the zero function, the random paths $\chi_n(\slot)$ converge in distribution to standard Brownian motion, if Lebesgue measure is the initial distribution. 
    Disregarding the (here inconsequential) non-Lipschitz nature of $f$, Theorem~\ref{thm:fluctuations} states\footnote{Theorem~\ref{thm:fluctuations} can be extended to the setup of the example. Alternatively, a Lipschitz approximation of~$f$ can be used to obtain similar results; we skip the straightforward details.} that for \emph{any} absolutely continuous initial distribution, the random paths converge in distribution to the same process, standard Brownian motion. This need not hold if one changes the centering:

    \medskip
    For $j \geq 1$, let $A_j$ denote the interval $[0, 2^{-2^j})$ define the probability measure  
     $\nu_j =2^{2^j} 1_{A_j} \fm$. 
    For~$k \leq 2^j -1$, $T^k(A_j) \subset [0,\frac12)$ so $\nu_j(f\circ T^k) = -1$,
    while for~$k \geq 2^j$, the pushforward $T^k_* \nu_j = \fm$ so $\nu_j(f \circ T^k) = \fm(f) = 0$. Therefore $\nu_j(\zeta_n(\slot, t)) \leq 0$ for all $j, n, t$. 
    Let $K\in \bN$ be large enough that $\varepsilon =\sum_{j\geq K} \frac1{j^2} < \frac12$. 
    Let $\mu$ be the absolutely continuous probability measure 
    $$ \mu = (1-\varepsilon)\fm + \sum_{j=K}^\infty \frac1{j^2} \nu_j\ .$$
    Since $\nu_j(\zeta_n(\slot, t)) \leq 0$ for each $j$, for each $j$ we have
    $$\mu(\zeta_n(\slot, t)) \leq \frac1{j^2}\nu_j(\zeta_n(\slot, t)) \ .$$
    Let $t \in (0,1]$ and take $n$ big enough that $nt > 2^K$. Denote~$j_n = \round{\log_2 \round{nt}}$, so $2^{j_n} \leq \round{nt} < 2^{j_n+1}$. It follows that  $K \leq j_n \leq \log_2 nt$. 
    Now $\nu_{j_n+1}(\zeta_n(\slot, t)) = n^{-1} \int_0^{nt} (-1)\, \rd s  = -t$. 
    Therefore,
    $$
    n^{\frac12} \mu(\zeta_n(\slot, t)) \leq 
    n^{\frac12} \frac{1}{(j_n+1)^2} \nu_{j_n+1}(\zeta_n(\slot, t)) =  
    -\frac{tn^{\frac12}}{(j_n+1)^2}  \leq - \frac{t n^{\frac12}}{(\log_2 nt + 1)^2} \ ,$$
    which tends to $-\infty$ as $n \to \infty$. 
    In particular, if one sets $c_n(t) = \mu(\zeta_n(\slot, t))$, then~$n^{\frac12}c_n$ converges pointwise (and uniformly away from 0) to $-\infty$, while~$n^{\frac12}\fm(\zeta_n(\slot,t)) = 0$. Hence,~$(c_n)_{n\geq1}$ does not form an admissible centering sequence; see~\eqref{eq:admissible_indep_x}.
\end{example}

\medskip
\begin{remark}
(i) Note that the limit distribution in Theorem~\ref{thm:fluctuations} is independent of $\mu$.

\smallskip
\noindent(ii)~The process~$\chi$ is a martingale starting at $0$, and its quadratic variation is
\beqn
[\chi]_t = \int_0^t \hat\sigma_s^2(f)\, \rd s \ .
\eeqn 
It follows~\cite{RogersWilliams_Vol2} that there exists such a standard Brownian motion $\widetilde W$ that
\beqn
\chi(t) = \widetilde W_{[\chi]_t}\ .
\eeqn
In other words, $\chi$ is a Brownian motion up to the non-random time change $t\mapsto[\chi]_t$.

\smallskip
\noindent(iii)~Regarding the possible degeneracy of the process~$\chi$, a result due to Leonov \cite{Leonov_1961} shows that, given $s\in[0,1]$,~$\hat\sigma_s^2(f)=0$ if and only if there exists an~$L^2$ function~$g:\bS\to\bR$ such that the cohomology equation~$\hat f_s = g\circ\gamma_s - g$ holds almost everywhere, and Livschitz (Liv{\v{s}}ic) rigidity theory~\cite{Livsic_1971,Livsic_1972} shows that~$g$ has a H\"older continuous representative (with the same exponent as~$f$) for which the cohomology equation holds everywhere. 
Equivalently, $\sum_{k=0}^{p-1}\hat f_s\circ\gamma_s^k(x) = 0$ for any point~$x$ of any period~$p$ with respect to~$\gamma_s$. Since periodic points are dense, $\hat\sigma_s^2(f)=0$ amounts to a severe restriction on the choice of~$f$. 
Because periodic orbits are stable under perturbations of the map, it moreover follows  that nondegeneracy is an open condition with respect to the map: if~$\hat\sigma_s^2(f)\ne 0$, then~$\hat\sigma_t^2(f)\ne 0$ for all~$t$ sufficiently close to~$s$. (Actually~$\hat\sigma_t^2(f)$ depends continuously on~$t$; see Lemma~\ref{lem:sigma}.)
\end{remark}

\medskip
\subsection{Generalizations}\label{subsec:generalizations} 
Here we discuss two ways to generalize Theorems~\ref{thm:mean} and~\ref{thm:fluctuations}. First we consider vector-valued observables $f:\bS\to\bR^d$, $d>1$. After that we consider curves~$\gamma$ in~$\cM$ with a finite number of discontinuities (jumps).

\medskip
In the case of a vector-valued observable~$f$, we define the vector-valued processes~$\zeta_n$,~$\zeta$ and~$\chi_n$ according to the formulas~\eqref{eq:zeta},~\eqref{eq:zeta_limit} and~\eqref{eq:chi}, respectively. The centering $c_n$ is also vector-valued, but Definition~\ref{defn:admissible} remains otherwise intact. For clarity, we denote the vector components by superindices enclosed in parentheses: $f = (f^{(1)},\dots,f^{(d)})$, etc.

\medskip
\begin{thm}\label{thm:gen1}
Suppose $f:\bS\to\bR^d$, $d>1$, is Lipschitz continuous. Then Theorems~\ref{thm:mean} and~\ref{thm:fluctuations} as well as Lemma~\ref{lem:admissible} continue to hold with the modifications that~$W$ is a $d$-dimensional standard Brownian motion and~$\hat\sigma_t(f)$ is the~$d\times d$ matrix defined as the square root of the covariance matrix
\beqn
\hat\sigma_t^2(f) = \lim_{m\to\infty} \hat\mu_t\!\left[\left(\frac{1}{\sqrt m}\sum_{k=0}^{m-1} \hat f_t \circ \gamma_t^k\right)\otimes \left(\frac{1}{\sqrt m}\sum_{k=0}^{m-1} \hat f_t \circ \gamma_t^k\right)\, \right].
\eeqn 
Here $(v\otimes v)_{ij} = v^{(i)}v^{(j)}$ for $v=(v^{(1)},\dots,v^{(d)})\in\bR^d$.
\end{thm}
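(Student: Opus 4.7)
The plan is to bootstrap the multivariate statements from the scalar results already established. For Theorem~\ref{thm:mean} and Lemma~\ref{lem:admissible}, the vector conclusions follow immediately by applying each scalar statement to the individual components~$f^{(i)}$: joint uniform convergence in~$C^0([0,1],\bR^d)$ is equivalent to coordinate-wise uniform convergence, and the admissibility conditions of Definition~\ref{defn:admissible} are phrased via uniform norms in~$t$, which reduce to the scalar statements component by component. The nontrivial part is the multivariate version of Theorem~\ref{thm:fluctuations}, for which my strategy is the Cram\'er--Wold device combined with tightness at the scalar level.

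First I would verify the polarization identity
\beqn
\hat\sigma_s^2(\langle u, f\rangle) = \langle u,\, \hat\sigma_s^2(f)\, u\rangle, \qquad u\in\bR^d,
\eeqn
which follows directly from~\eqref{eq:sigma} by expanding the square and observing that inner products commute with the limit and with~$\hat\mu_s$. This exhibits $\hat\sigma_s^2(f)$ as symmetric positive semi-definite, so that its symmetric square root $\hat\sigma_s(f)$ is well defined; continuity of $s\mapsto \hat\sigma_s^2(f)$ as a matrix then follows by polarization from the scalar continuity of $s\mapsto \hat\sigma_s^2(\langle u, f\rangle)$. Next, for any fixed $u\in\bR^d$, the process $\langle u, \chi_n\rangle$ is exactly the scalar $\chi_n$ associated with the Lipschitz scalar observable $\langle u, f\rangle$ and with centering $\langle u, c_n\rangle$, which is admissible whenever~$c_n$ is, since Definition~\ref{defn:admissible} is linear in~$c_n$. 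The scalar Theorem~\ref{thm:fluctuations} thus gives convergence of $\langle u, \chi_n\rangle$ in $C^0([0,1],\bR)$ to a centered Gaussian martingale with quadratic variation $\int_0^t \hat\sigma_s^2(\langle u, f\rangle)\,\rd s = \int_0^t \langle u, \hat\sigma_s^2(f)\, u\rangle\,\rd s$. Cram\'er--Wold then identifies the finite-dimensional distributions of any limit point of~$\chi_n$, and componentwise tightness (from the scalar theorem applied to each~$f^{(i)}$) gives tightness of $\{\chi_n\}$ in~$C^0([0,1],\bR^d)$.

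The main obstacle is to identify this limit as the It\^o integral $\int_0^t \hat\sigma_s(f)\,\rd W_s$ against a standard $d$-dimensional Brownian motion~$W$. What the above produces is a continuous, centered, $d$-dimensional Gaussian martingale with matrix quadratic variation $\int_0^t \hat\sigma_s^2(f)\,\rd s$; this characterization uniquely determines the law, and such a process admits the stated It\^o representation (on an enlargement of the probability space when $\hat\sigma_s^2(f)$ is degenerate) by a standard martingale representation argument. Closer in spirit to the paper's methodology, one can alternatively solve the vector-valued martingale problem for the generator $\tfrac12\sum_{i,j}(\hat\sigma_s^2(f))_{ij}\partial_i\partial_j$ directly: the coupling and memory-loss estimates driving the scalar proof extend component by component to show that any subsequential limit of~$\chi_n$ makes
\beqn
\phi(\chi(t)) - \tfrac12\int_0^t \sum_{i,j}(\hat\sigma_s^2(f))_{ij}\,\partial_i\partial_j\phi(\chi(s))\,\rd s
\eeqn
a martingale for each $\phi\in C_c^2(\bR^d)$, and well-posedness of the resulting martingale problem with continuous coefficients pins down the limit uniquely.
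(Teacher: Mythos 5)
The componentwise reduction you propose for Theorem~\ref{thm:mean} and Lemma~\ref{lem:admissible} is fine, and your tightness observation and the polarization identity $\hat\sigma_s^2(\langle u,f\rangle)=\langle u,\hat\sigma_s^2(f)u\rangle$ are correct. The problem is your primary route to the multivariate Theorem~\ref{thm:fluctuations}: Cram\'er--Wold does not identify the finite-dimensional distributions of a subsequential limit from the scalar projections $\langle u,\chi_n\rangle$ alone. To determine the law of $(\chi(t_1),\dots,\chi(t_k))\in\bR^{dk}$ you would need convergence of \emph{all} linear combinations $\sum_{i}\langle u_i,\chi_n(t_i)\rangle$ with the vectors $u_1,\dots,u_k$ allowed to vary from time to time; applying the scalar theorem to $\langle u,f\rangle$ only hands you the restricted family $\sum_i a_i\langle u,\chi_n(t_i)\rangle$ where a single $u$ is frozen across all times. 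Equivalently, the scalar martingale problem (Proposition~\ref{prop:martingale}) gives that each $\langle u,\chi\rangle$ is a martingale with respect to its \emph{own} natural filtration, which is strictly coarser than the filtration generated by the full vector process; this does not make $\chi$ a $d$-dimensional martingale, and so the claim that the Cram\'er--Wold step ``produces a continuous, centered, $d$-dimensional Gaussian martingale with matrix quadratic variation $\int_0^t\hat\sigma_s^2(f)\,\rd s$'' does not follow from what you have established. Without the joint-filtration martingale property one cannot invoke the L\'evy-type characterization or the representation theorem you appeal to.

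Your ``alternative'' route --- establishing the multivariate Dynkin formula directly and solving the $\bR^d$-valued martingale problem for the generator $\tfrac12\sum_{ij}(\hat\sigma_s^2(f))_{ij}\partial_i\partial_j$ --- is not an alternative but the necessary argument, and it is precisely what the paper does: Lemma~\ref{lem:sigma} is upgraded to the matrix series~\eqref{eq:sigma_hat_vec}, Lemma~\ref{lem:variance} and Lemma~\ref{lem:dec_moments} are upgraded to matrix/vector form, and the Dynkin formula becomes~\eqref{eq:Dynkin_vector}. The crucial point is that the decorrelation estimate~\eqref{eq:ave} and Lemma~\ref{lem:dec_moments} hold with test functions $B_1,\dots,B_m:\bR^d\to\bR$ depending on the \emph{entire} vector $\xi_n(t_i)$, which is what licenses the martingale property with respect to the joint filtration $(\fF_t)$ generated by the vector process. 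That is the content that cannot be bootstrapped from one-dimensional projections. So: promote your last paragraph from an aside to the actual proof, and drop the Cram\'er--Wold identification step, or replace it by the observation that convergence of fdds is obtained once the vector martingale problem is solved.
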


Note that~$\zeta_n^{(i)}$ and~$\chi_n^{(i)}$ are exactly the processes appearing in~\eqref{eq:zeta} and~\eqref{eq:chi} for the scalar-valued observable~$f^{(i)}$, $1\le i\le d$.
The generalization here is that Theorem~\ref{thm:gen1} describes the joint behaviour of each of these two sets of~$d$ scalar-valued processes.

\medskip
The next result allows for jump discontinuities:
\begin{thm}\label{thm:gen2}
Let $\{I_1,\dots,I_m\}$ be a finite partition of $[0,1]$ into intervals. Suppose the curve $\gamma:[0,1]\to \cM$ restricted to each $I_i$, $1\le i\le m$, is H\"older continuous with exponent~$\eta\in(0,1)$, having possibly jumps where two intervals meet. Then Theorems~\ref{thm:mean} and~\ref{thm:fluctuations} as well as Lemma~\ref{lem:admissible} continue to hold, as does Theorem~\ref{thm:gen1}. 
\end{thm}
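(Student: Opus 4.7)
The plan is to reduce to the case of a single jump at $t_0 \in (0,1)$ by induction on the number of discontinuities $m-1$, and then concatenate two H\"older pieces. Throughout, one restricts attention to times in $[0,t_0]$ for the \emph{first piece} and to $[t_0,1]$ for the \emph{second piece}. On the first piece, after a linear time rescaling, the curve $\gamma|_{[0,t_0]}$ together with the subarray $\{T_{n,k}\}_{0 \le k \le \round{nt_0}}$ satisfies the hypotheses of Theorems~\ref{thm:mean} and~\ref{thm:fluctuations} and of Lemma~\ref{lem:admissible}, yielding the desired convergence of $\zeta_n$ and $\chi_n$ restricted to $[0,t_0]$.

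For the second piece, I would take as initial distribution the pushforward $\mu_{n,\round{nt_0}} = (T_{n,\round{nt_0}} \circ \dots \circ T_{n,1})_*\mu$. Since each $T_{n,k} \in \cM$ is $C^2$ expanding (hence nonsingular) and $\mu$ is absolutely continuous, so is $\mu_{n,\round{nt_0}}$; applying the original theorems to the H\"older piece $\gamma|_{[t_0,1]}$ together with the shifted subarray $\{T_{n,k}\}_{\round{nt_0} < k \le n}$ then delivers convergence of the second-half processes. For Theorem~\ref{thm:mean}, both limits are non-random, so their concatenation trivially produces the piecewise continuous limit $\zeta(t) = \int_0^t \hat\mu_s(f)\,\rd s$, with the integrand now merely piecewise continuous. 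For Theorem~\ref{thm:fluctuations}, the candidate limit is $\int_0^t \hat\sigma_s(f)\,\rd W_s$ read piecewise: the integrand is bounded and measurable (the finite jump set is Lebesgue null, so the It\^o integral is unaffected), and by the well-posedness of the martingale problem with piecewise continuous coefficients the uniqueness argument of the continuous case applies verbatim.

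The main obstacle is the joint convergence across the jump, i.e.\ showing that the fluctuations on the second piece are asymptotically independent of those on the first. This is where the rapid memory loss established via coupling in Sections~\ref{sec:preliminariesI}--\ref{sec:preliminariesII} becomes decisive: by the time the second-piece dynamics has accumulated $o(n)$ iterates past $\round{nt_0}$, the conditional distribution of the state has relaxed exponentially close to the SRB measure of the current map, erasing dependence on the past. Consequently the two pieces behave as if driven by independent pieces of Brownian motion, and their concatenation is a single Brownian integral with the piecewise continuous integrand $\hat\sigma_s(f)$. Finally, Lemma~\ref{lem:admissible} extends with no essential change, since admissibility is defined by a uniform-in-$t$ smallness condition that may be verified on each $I_i$ separately; and Theorem~\ref{thm:gen1} fits into the same scheme by replacing scalar variance with the covariance matrix in the It\^o integral above.
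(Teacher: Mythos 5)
Your proposal takes a genuinely different route from the paper's, and the difference is where the gap lies. The paper never decomposes the process at the jump times; it re-examines the preliminary lemmas (Lemmas~\ref{lem:transfer_pert}, \ref{lem:SRB_parameter_regularity}, \ref{lem:pushforward_vs_SRB}, \ref{lem:combineddensities}) and observes that they hold \emph{piecewise} on each $I_i$: the only estimates damaged by a jump at $\hat\tau_i$ are those comparing $\rho_{n,\round{nt}}$ to $\hat\rho_t$, and these recover after $O(\log n)$ iterates past the singularity, while the regularity (Lipschitz/$\cD_L$) bounds on the pushforward densities are untouched. Consequently the whole Dynkin/martingale machinery of Sections~\ref{sec:preliminariesII}--\ref{sec:fluctuations} runs unchanged on the single space $C^0([0,1],\bR)$; the only concrete modification is excising $O(a_n)$-neighbourhoods of each $\hat\tau_i$ from the $s$-integral in~\eqref{eq:second_temp2}, which contributes an error of the same order as was already present. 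No concatenation is ever needed.

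The gap in your concatenation approach is precisely the step you flag and then dismiss: ``the main obstacle is the joint convergence across the jump.'' Memory loss (Lemma~\ref{lem:memory_loss}) tells you that the one-point marginal of $x_{n,\round{nt_0}}$ relaxes to the SRB law; what you actually need is that the \emph{conditional} law of $x_{n,\round{nt_0}}$, given the whole pre-$t_0$ path $\chi_n|_{[0,t_0]}$, relaxes, and then that asymptotic independence of the two marginal laws upgrades to convergence of the \emph{joint} law in $C^0([0,1],\bR)$. That is exactly the content of the process-level decorrelation in~\eqref{eq:ave} and Lemma~\ref{lem:dec_moments} together with the martingale-problem uniqueness argument --- i.e.\ the bulk of the original proof --- so ``consequently the two pieces behave as if driven by independent Brownian motions'' is an assertion of roughly the same difficulty as the theorem itself. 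A secondary defect: Theorem~\ref{thm:fluctuations} as stated requires a \emph{fixed} absolutely continuous initial measure and a centering admissible with respect to it; applying it to the second piece with the $n$-dependent measure $\mu_{n,\round{nt_0}}$ and the $n$-dependent centering $c_n(\cdot)-c_n(t_0)$ requires a uniform-over-$\cD_{L_*}$ version, which is implicit in the paper's proofs but is not what your invocation of the theorem delivers.
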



\medskip
\section{Overview of proofs} \label{sec:outline}

We start off with a probability measure $\mu$ on the circle. This allows us to regard $x$ as a random variable with distribution $\mu$, and then 
$x_{n,k} = T_{n,k}\circ\dots\circ T_{n,1}(x)$ as another random variable with distribution $\mu_{n,k}$, the pushforward of $\mu$. These random variables are neither identically nor independently distributed. 

\medskip
	While not identical, we show in Lemma~\ref{lem:combineddensities} that the distributions  of $x_{n,k}$ and $x_{n,k+1}$ (the densities of the corresponding pushforwards) are similar to each other and to the corresponding SRB distribution. 

\medskip
	Compensating for lack of independence, one can still obtain strong decay of correlations. This is shown in Corollary~\ref{cor:memory_bounds} and Lemma~\ref{lem:dec}, following Lemma~\ref{lem:memory_loss} which is obtained using a coupling argument.

\medskip
	In order to get uniform bounds in the aforementioned results, we must assume something about the regularity of the initial density. Being Lipschitz continuous is enough; often we also assume that the initial density is strictly positive as this simplifies the arguments.  

\medskip
	In Section~\ref{sec:preliminariesII}, we study the processes $\zeta_n, \chi_n$ under two assumptions. First, the initial measure $\mu$ should be strictly positive and Lipschitz continuous. Second, the centering should be of the form $c_n(t) = \mu(\zeta_n(\slot,t))$ [using other measures with regular densities in the centering would work, but choosing $\mu$ leads to a slightly easier proof]. 
To emphasize the particular nature of this process, we denote it by $\xi_n$ instead of $\chi_n$.  We control the second moment $\mu[[\xi_n(t+h) - \xi_n(t)]^2]$ by $h$ times the variance at $t$ plus an error term, in Lemma~\ref{lem:variance}. Using regularity and decorrelation of the pushforward densities, we obtain decorrelation estimates for the processes $\zeta_n, \xi_n$ in Lemma~\ref{lem:dec_moments}. Early in the section, the variance $\hat \sigma_t^2(f)$ is shown to depend continuously on $t$. 

\medskip
	Rather than proving Theorem~\ref{thm:mean} directly, it is simpler to first prove the following proposition which assumes a regular initial density, recalling $\bfP^\mu_n$ denotes the distribution of $\zeta_n$ in $C^0([0,1], \bR)$. 

\begin{prop}\label{prop:meanLip}
Suppose~$f$ is Lipschitz continuous and~$\mu$ is absolutely continuous with a strictly positive Lipschitz continuous density. The measures $\bfP^\mu_n$ converge weakly, as $n\to\infty$, to the point mass at $\zeta\in C^0([0,1],\bR)$, where
\beqn
\zeta(t) = \int_0^t \hat\mu_s(f)\,\rd s\ .
\eeqn
\end{prop}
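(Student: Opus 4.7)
\textbf{Proof plan for Proposition~\ref{prop:meanLip}.}

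The plan is to prove weak convergence of $\bfP^\mu_n$ to a Dirac mass via the standard two-step route: (a) establish tightness of $\{\bfP^\mu_n\}$ in $C^0([0,1],\bR)$, and (b) identify the limit by proving convergence of the one-dimensional marginals $\zeta_n(\slot,t) \to \zeta(t)$ in probability (equivalently, in $L^2(\mu)$) for each $t\in[0,1]$. Since the limit $\zeta$ is deterministic and continuous, these two together imply weak convergence to the point mass at $\zeta$.

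Tightness is immediate and essentially cost-free. Because $f$ is bounded, the definition of $\zeta_n$ gives the deterministic Lipschitz bound $|\zeta_n(x,t) - \zeta_n(x,s)| \le \|f\|_\infty\,|t-s|$ uniformly in $n$ and $x$, and $\zeta_n(x,0)=0$. Arzelà--Ascoli then yields relative compactness of the supports in $C^0([0,1],\bR)$, hence tightness of the laws. The continuity of $t \mapsto \hat\mu_t(f)$ (which in particular makes $\zeta$ well-defined and continuous) will follow from the H\"older continuity of $\gamma$ and the continuous dependence of the SRB measure $\hat\mu_T$ on $T\in\cM$ in a sense strong enough to pair with the Lipschitz observable $f$; this is a standard consequence of the spectral-gap/perturbation theory for the transfer operators of expanding maps, and will also be a byproduct of the pushforward-density estimates of Lemma~\ref{lem:combineddensities}.

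For step (b), I would compute mean and variance separately. For the mean, writing $\zeta_n(\slot,t) = n^{-1}\sum_{k=0}^{\round{nt}-1} f_{n,k} + O(n^{-1})$, one gets
\beqn
\mu(\zeta_n(\slot,t)) = \frac{1}{n}\sum_{k=0}^{\round{nt}-1} \mu_{n,k}(f) + O(n^{-1}).
\eeqn
By Lemma~\ref{lem:combineddensities}, $\mu_{n,k}$ is close (uniformly) to the SRB measure $\hat\mu_{n,k}$, and by the convergence rate~\eqref{eq:T_rate} together with continuous dependence of SRB measures, $\hat\mu_{n,k}(f)$ is close to $\hat\mu_{k/n}(f)$. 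Continuity of $s\mapsto\hat\mu_s(f)$ then lets one recognize the right-hand side as a Riemann sum converging to $\int_0^t \hat\mu_s(f)\,\rd s = \zeta(t)$, uniformly in $t$. For the variance, expand
\beqn
\mu\bigl([\zeta_n(\slot,t) - \mu(\zeta_n(\slot,t))]^2\bigr) = \frac{1}{n^2}\sum_{0\le j,k < \round{nt}} \bigl[\mu(f_{n,j} f_{n,k}) - \mu(f_{n,j})\mu(f_{n,k})\bigr] + O(n^{-1}),
\eeqn
and apply the decorrelation estimates of Corollary~\ref{cor:memory_bounds} / Lemma~\ref{lem:dec}, which bound each summand by $C\theta^{|k-j|}$ for some $\theta\in(0,1)$ (this is where the regularity of the initial density is used to get uniform constants). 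Summing the geometric series gives variance $O(n^{-1})\to 0$. Combined with the mean convergence, Chebyshev's inequality yields $\zeta_n(\slot,t)\to\zeta(t)$ in probability for each fixed $t$, which is all that is needed in conjunction with tightness.

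The main obstacle is concentrated entirely in the ingredients already established earlier in the paper: the coupling-based memory loss (Lemma~\ref{lem:memory_loss}) and its consequences for pushforward density regularity and decay of correlations. Granting those, the proof of Proposition~\ref{prop:meanLip} itself is routine and amounts to organizing the mean-plus-variance argument above. The hypothesis that the density of $\mu$ is strictly positive and Lipschitz is exactly what is needed to apply those earlier lemmas with uniform constants; removing strict positivity and Lipschitz regularity (so as to derive the full Theorem~\ref{thm:mean} for arbitrary absolutely continuous $\mu$) is then a separate approximation step, not part of this proposition.
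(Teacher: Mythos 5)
Your proof is correct, but it identifies the limit by a genuinely different route than the paper. Both arguments share the same tightness step (the deterministic Lipschitz bound $|\zeta_n(x,t)-\zeta_n(x,s)|\le\|f\|_\infty|t-s|$), but where you compute the one-dimensional marginals directly --- showing $\mu(\zeta_n(\slot,t))\to\zeta(t)$ via the pushforward-density estimates of Lemma~\ref{lem:combineddensities} and then $\mathrm{Var}_\mu(\zeta_n(\slot,t))=O(n^{-1})$ via the exponential decorrelation of Lemma~\ref{lem:dec}, then closing with Chebyshev --- the paper instead derives a Dynkin formula (Lemma~\ref{lem:Dynkin1}): for any subsequential weak limit $\bfP$, $\frac{d}{dt}\bfE[A\circ\pi_t]=\bfE[A'\circ\pi_t]\,\hat\mu_t(f)$ for $A\in C^\infty$. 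This relies on the process-level decorrelation of Lemma~\ref{lem:dec_moments}(i) rather than Lemma~\ref{lem:dec} directly, and the limit is identified by showing $\frac{d}{dt}\bfE[|\pi_t-\zeta(t)|^2]=0$, hence $\bfE[|\pi_t-\zeta(t)|^2]\equiv 0$. Your mean-plus-variance-plus-Chebyshev argument is more elementary and self-contained; its only ingredients are what one already has in hand after Lemma~\ref{lem:dec}. The paper's Dynkin route is heavier here than strictly necessary, but it is deliberately chosen because the same scheme --- tightness, Dynkin formula for a subsequential limit, identification via a well-posed martingale problem --- is exactly what is needed for the diffusive fluctuation limit in Theorem~\ref{thm:fluctuations}, where a direct moment computation at fixed $t$ would not by itself pin down the limit law. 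One small imprecision in your write-up: strict positivity of the density is not actually used by Lemma~\ref{lem:combineddensities} or Lemma~\ref{lem:dec} (both require only Lipschitz continuity on some $J_z$); strict positivity becomes relevant when one wants membership in $\cD_L$ for the coupling and process-level decorrelation lemmas, which your route largely avoids. This is harmless since the proposition assumes more than you use, but worth noting.
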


This proposition is proven in Section~\ref{sec:meanLip}, via a Dynkin formula.
Noting that continuity of $t \mapsto \hat\mu_t(f)$ follows immediately from~\eqref{eq:SRB_parameter_regularity}, the assertions of Theorem~\ref{thm:mean} now follow if one can drop the regularity condition on the absolutely continuous measure. This is done  in Section~\ref{sec:meanProof} via a portmanteau argument. 

\medskip
We prove Theorem~\ref{thm:fluctuations} in Section~\ref{sec:fluctuations}. Again, we first prove the following proposition assuming a more regular initial density and special centering, and then use a portmanteau argument together with Lemma~\ref{lem:admissible} to pass to the full theorem. Lemma~\ref{lem:admissible} is proven at the start of Section~\ref{sec:fluctuations}.

\begin{prop}\label{prop:fluctuations}
Suppose~$f$ is Lipschitz continuous, $\mu$ is absolutely continuous with a strictly positive Lipschitz continuous density, and $c_n(t) = \mu(\zeta_n(t))$. 
The measures $\bP^\mu_n$ (distribution of $\xi_n$) converge weakly, as $n\to\infty$, to the law of the process
\beqn
\chi(t) = \int_0^t \hat\sigma_s(f)\,\rd W_s\ .
\eeqn
Here $W$ is a standard Brownian motion, and the stochastic integral is to be understood in the sense of It\={o}.
\end{prop}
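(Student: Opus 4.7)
My plan is to follow the Stroock--Varadhan martingale-problem approach advertised in Section~\ref{sec:outline}. The target process $\chi(t)=\int_0^t \hat\sigma_s(f)\,\rd W_s$ is a time-changed Brownian motion with a deterministic (and, by Lemma~\ref{lem:sigma}, continuous in $s$) time change, and is the unique-in-law solution to the martingale problem with time-dependent generator $\cL_t F = \tfrac12 \hat\sigma_t^2(f) F''$. Hence once I have (i) tightness of $\{\bP^\mu_n\}_{n\ge 1}$ on $C^0([0,1],\bR)$ and (ii) every subsequential weak limit of $\bP^\mu_n$ solves this martingale problem, Prokhorov and well-posedness will yield weak convergence to the law of $\chi$.

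For tightness I appeal to the Kolmogorov--Chentsov criterion. Lemma~\ref{lem:variance} already yields $\mu[(\xi_n(t+h)-\xi_n(t))^2]\le Ch + o(1)$, and the decorrelation bounds of Lemma~\ref{lem:dec_moments}, iterated to fourth order using the same coupling ingredients, will produce $\mu[(\xi_n(t+h)-\xi_n(t))^4]\le Ch^2$ for $h$ not too small. Together with $\xi_n(0)=0$, this is enough for tightness of $\{\bP_n^\mu\}$ in $C^0([0,1],\bR)$.

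To identify the limit, fix $F\in C^3_b(\bR)$, $0\le s<t\le 1$, and a bounded continuous $G$ on $C^0([0,s],\bR)$, and aim to show
\beqn
\lim_{n\to\infty}\mu\!\left[G(\xi_n|_{[0,s]})\!\left(F(\xi_n(t))-F(\xi_n(s))-\int_s^t\tfrac12\hat\sigma_r^2(f)F''(\xi_n(r))\,\rd r\right)\right]=0.
\eeqn
Partition $[s,t]$ as $s=s_0<s_1<\dots<s_N=t$ with step $h_N=(t-s)/N$, set $\Delta_j=\xi_n(s_{j+1})-\xi_n(s_j)$, and use telescoping plus Taylor's theorem to write
\beqn
F(\xi_n(t))-F(\xi_n(s))=\sum_{j=0}^{N-1}\!\Bigl(F'(\xi_n(s_j))\Delta_j+\tfrac12 F''(\xi_n(s_j))\Delta_j^2\Bigr)+R_{n,N},
\eeqn
with $|R_{n,N}|\lesssim \sum_j|\Delta_j|^3$, controlled by the fourth-moment bound above. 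Conditioning on the $\sigma$-algebra $\cF^n_j$ generated by $x_{n,0},\dots,x_{n,\round{ns_j}}$, the memory-loss estimates of Lemma~\ref{lem:memory_loss} and Corollary~\ref{cor:memory_bounds} show $\E[\Delta_j\mid\cF^n_j]$ decays uniformly to $0$, so $\mu[G\,F'(\xi_n(s_j))\Delta_j]\to 0$. Meanwhile Lemma~\ref{lem:variance}, together with Lemma~\ref{lem:combineddensities} (which says the actual density at step $\round{ns_j}$ is close to $\hat\mu_{s_j}$), yields
\beqn
\mu[G\,F''(\xi_n(s_j))\Delta_j^2]=h_N\,\hat\sigma_{s_j}^2(f)\,\mu[G\,F''(\xi_n(s_j))]+o(1),
\eeqn
as $n\to\infty$. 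Summing in $j$ produces a Riemann sum for $\int_s^t\tfrac12\hat\sigma_r^2(f)F''(\xi_n(r))\,\rd r$; letting $n\to\infty$ first, then $N\to\infty$, continuity of $\hat\sigma_r^2(f)$ combined with equicontinuity of $\xi_n$ (from tightness) closes the identification.

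The main obstacle will be the conditional block-variance analysis, i.e.\ showing $\E[\Delta_j^2\mid\cF^n_j]\approx h_N\hat\sigma_{s_j}^2(f)$ with quantitatively small errors, uniformly in $j$. Three ingredients must be combined: coupling to replace the true density of $x_{n,\round{ns_j}}$ by the SRB density $\hat\mu_{s_j}$; absorbing the discrepancy of order $h_N^\eta+n^{-\eta}$ between the non-autonomous block dynamics $T_{n,\round{ns_j}+1},\dots,T_{n,\round{ns_{j+1}}}$ and the autonomous map $\gamma_{s_j}$; and invoking the static CLT formula~\eqref{eq:sigma} for the fixed expanding map $\gamma_{s_j}$ to recognise the leading-order variance. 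The order of limits is delicate: $N$ must be fixed while $n\to\infty$ so that within each block of length $h_N$ the dynamics are essentially autonomous and the memory of past blocks has decayed, after which $N\to\infty$ produces the integrals in the limit.
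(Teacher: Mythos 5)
Your high-level plan — tightness via a Kolmogorov--Chentsov fourth-moment bound, then showing every subsequential limit solves the well-posed martingale problem for the generator $\mathscr{L}_t = \tfrac12\hat\sigma_t^2(f)\,d^2/dx^2$ — is exactly the paper's strategy (Lemmas~\ref{lem:xi_tight},~\ref{lem:Dynkin2},~\ref{lem:well_posed} and Proposition~\ref{prop:martingale}). But there is a genuine gap in the conditioning mechanism you propose, which is the crux of the argument.

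You condition on ``the $\sigma$-algebra $\cF^n_j$ generated by $x_{n,0},\dots,x_{n,\round{ns_j}}$'' and want to show $\E[\Delta_j\mid\cF^n_j]\to 0$ and $\E[\Delta_j^2\mid\cF^n_j]\approx h_N\hat\sigma_{s_j}^2(f)$. This cannot work as stated: the system is deterministic, so $x_{n,k}$ is a deterministic function of $x_{n,0}=x$, and $\sigma(x_{n,0})$ is already the entire Borel $\sigma$-algebra. Hence $\E[\,\cdot\mid\cF^n_j]$ is trivial ($\E[\Delta_j\mid\cF^n_j]=\Delta_j$), and the ``memory-loss'' estimates of Lemma~\ref{lem:memory_loss} and Corollary~\ref{cor:memory_bounds} do not control this conditional expectation. (Taking $\cF^n_j=\sigma(x_{n,\round{ns_j}})$ instead is also unworkable: the expanding map is non-invertible, so $\xi_n|_{[0,s]}$ is not $\cF^n_j$-measurable, and the filtration would be decreasing in $j$.) What replaces the filtration in the paper is the \emph{induced partition} $\cP_{z,n,u}$ of Section~\ref{sec:preliminariesII} (around~\eqref{eq:xi_difference} and~\eqref{eq:ave}): the partition into arcs on which the time-$\round{nu}$ composition is a diffeomorphism onto $J_{z_{n,u}}$. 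On each atom, the past is nearly constant (exponential contraction backwards), and the pushforward $\tilde\mu_{z,n,u,j}$ of the conditional measure lies in $\cD_{L_*}$ by Lemma~\ref{lem:D_stability}(iii), so future increments under $\tilde\mu_{z,n,u,j}$ can be compared to the SRB measure via coupling (Lemma~\ref{lem:memory_loss}). This is the device that makes the ``conditional block-variance'' estimate rigorous; your proposal never introduces it. A related omission: once you condition on a partition atom, the centering $\mu(\zeta_n(t))$ is no longer the mean under the conditional measure, and this re-centering must be shown to cost only a uniform $O(n^{-\frac12})$ error, as done in the paper through the auxiliary processes $\xi_n^{(u)}$ and $\xi_n^{\nu,(u)}$.

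Two smaller points. First, the fourth-moment bound for tightness follows from the decorrelation Lemma~\ref{lem:dec} (applied to products $\bar f_{n,\round{ns}}\bar f_{n,\round{nr}}\bar f_{n,\round{nu}}\bar f_{n,\round{nv}}$), not from iterating Lemma~\ref{lem:dec_moments}; the latter lemma treats observables of $\xi_n(s)$, not of $f_{n,k}$, and would not give $Ch^2$ directly. Second, your order of limits (fix $N$, send $n\to\infty$, then $N\to\infty$) differs from the paper's choice $K_n\sim n^q$, $q\in(0,\tfrac12)$, with $K_n\to\infty$. Your iterated limit could be made to work using the uniformity of the errors in Lemma~\ref{lem:variance} (which gives $\int_t^{t+h}\hat\sigma_s^2\,\rd s + o(n^{-\frac12}) + h\,o(1)$ with errors uniform in $t,h$), but the paper's $n$-dependent partition lets it absorb the errors into $o(K_n^{-1})$ cleanly; either way, this is a matter of taste compared to the conditioning gap above.
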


The proof of the proposition is presented in Section~\ref{sec:tech}. Tightness of the measures is shown first, allowing one to extract convergent subsequences. Again a Dynkin formula for the limit measure $\bP$ is proven. A rather lengthy argument then implies that $\bP$ solves the martingale problem corresponding to the desired expression for the diffusion $\chi$. By uniqueness of such solutions, the statement of the proposition holds.

\medskip
The generalizations are proven in Section~\ref{sec:generalizations_proofs}.

\medskip

\section{Notation and definitions}\label{sec:notation}
We use $\mathrm{Lip}$ to denote the space of Lipschitz continuous maps (from $\bS$ to $\bR$), $\mathrm{Lip}(h)$ to denote the Lipschitz constant of a function $h$ and $\|h \|_{\mathrm{Lip}} = \mathrm{Lip}(h) + \| h \|_\infty$ to denote its Lipschitz norm. 

\medskip 
The constant function $1 : \bS \to \bR$ takes the value $1$ identically, while if $I \subset \bS$, the characteristic function $1_I$ takes the value $1$ on $I$ and $0$ on $\bS\setminus I$. 

\medskip
Given $T\in\cM$, the transfer operator $\cL_T:L^1(\fm)\to L^1(\fm)$ is defined by
\beqn
\cL_T h(x) = \sum_{y\in T^{-1}\{x\}} \frac{h(y)}{T'(y)}\ .
\eeqn
It describes the evolution of probability densities under the map~$T$: if~$x$ is distributed according to probability density~$\rho$, then~$T(x)$ is distributed according to the probability density~$\cL_T\rho$.
We will often write
\beqn
\cL_t = \cL_{\gamma_t}
\quad\text{and}\quad
\cL_{n,k} = \cL_{T_{n,k}}\ .
\eeqn
If $\cL_i$ is the transfer operator of $T_i$, then the transfer operator of $\cT = T_k\circ\dots\circ T_1$ satisfies $\cL_{\cT} = \cL_{k}\cdots\cL_{1}$. We shall sometimes write $\cT z$ for $\cT(z)$. 

\medskip
In the following section, $\cL$ will stand for the transfer operator of a general map in $\cM$ and $(\cL_i)_{i=1}^\infty$ will be a general sequence of such operators. The constants appearing will not depend on the choice of these operators. We shall routinely use the facts that $\cL$ maps probability densities to probability densities and that
\beqn
|\cL (uv)| \le \cL|uv| \le \|u\|_\infty \cL |v| \ .
\eeqn

\medskip
Recall that~$\hat\mu_t$ is the SRB measure (equivalent to Lebesgue measure~$\fm$) associated to~$\gamma_t$, and that~$\hat\mu_{n,k}\equiv \hat\mu_{T_{n,k}}$ is the measure associated to $T_{n,k}$. Denote by~$\hat\rho_t$ and~$\hat\rho_{n,k}$ the corresponding densities, and by $\hat \rho_T$ the density of $\hat\mu_T$.

\medskip
\begin{remark}
    Since $\cL_T^k 1$ converges in the supremum norm to $\hat \rho_T$ as $k \to \infty$, it will follow from~\eqref{eq:LTn1} that $\hat\rho_T\in \mathrm{Lip}$ with 
\beq\label{eq:Lip_bound}
\sup_{T\in \cM }\|\hat\rho_T\|_{\mathrm{Lip}} < \infty\ .
\eeq
\end{remark}

\medskip
Let $\mu_{n,k}$ denote the pushforward
\beqn
\mu_{n,k} = (T_{n,k}\circ\dots\circ T_{n,1})_*\mu \ ,
\eeqn
where $\mu$ is a probability measure with density~$\rho$. Then the density of $\mu_{n,k}$ is
\beqn
\rho_{n,k} = \cL_{n,k}\cdots\cL_{n,1}\rho \ .
\eeqn

\medskip
Recall that $f_{n,k} = f\circ T_{n,k}\circ\dots\circ T_{n,1}$; see~\eqref{eq:f}. Given an initial measure~$\mu$, we will often encounter the centered versions of such functions, defined as
\beq\label{eq:bar_f}
\bar f_{n,k} = f_{n,k} - \mu(f_{n,k})\ .
\eeq

\medskip
Our main results are formulated in terms of Lipschitz continuous densities on~$\bS$. However, for technical reasons we introduce the following notions of regularity which will be convenient for the proofs.

\medskip
\begin{defn}
Given a point $z\in\bS$, we define the arc $J_z = \bS\setminus\{z\}$. 
We denote by  $|x-y|_z $ the length of the subarc of $J_z$ with endpoints~$x,y$.
We say that a function $g:J_z\to\bR$ is Lipschitz continuous on $J_z$, with constant $L>0$, if 
\beqn
|g(x)-g(y)| \le L|x-y|_z\ , \quad x,y\in J_z \ .
\eeqn
In other words, $g$ is Lipschitz continuous except across $z\in\bS$ when the distance of~$x$ and~$y$ is understood as the length of the arc between the two points not containing~$z$.
\end{defn}

\begin{defn}
Given $L>0$, let $\cD_L$ be the class of all probability densities $\psi:\bS\to\bR$ with the properties that (i)~$\psi>0$ and (ii)~there exists $z\in\bS$ such that $\log \psi$ is Lipschitz continuous on $J_z = \bS\setminus\{z\}$ with constant~$L$. 
\end{defn}

\medskip
For the most part we will work with the class $\cD_L$, and prove the main theorems in the setting of such initial densities first. Only then will the theorems be generalized to include Lipschitz continuous initial densities which are allowed to take the value zero. 

\medskip

\begin{remark}\label{remark:log_Lip}

(i) A function $g:\bS\to\bR$ that is Lipschitz continuous with constant $L$ relative to the standard metric~$d$ of~$\bS$ is, in the above sense, Lipschitz continuous on $J_z$ with the same constant~$L$, for any $z\in\bS$. This follows from $d(x,y)\le |x-y|_z$.

\smallskip
\noindent (ii) Note that if $\psi\in\cD_L$, then $e^{-L}\le \psi \le e^L$ and that $\psi$ itself (without the logarithm) is Lipschitz continuous on~$J_z$ with constant~$Le^L$. These follow from the fact that the probability density $\psi$ must take the value $1$ somewhere on $J_z$.

\smallskip
\noindent (iii) In the opposite direction, if a probabililty density $\psi$ satisfies $\psi\ge c>0$ and is Lipschitz continuous on $J_z$ with constant $L$, then $\psi\in\cD_{c^{-1}L}$. Indeed,
\beqn
|\log\psi(x)-\log\psi(y)| \le c^{-1} |\psi(x)-\psi(y)| \le c^{-1}L |x-y|_z\ , \quad x,y\in J_z\ .
\eeqn

\end{remark}

\medskip
The following lemma states that $\cup_{L>0}\cD_L$ is dense in the set of all probability densities with respect to the $L^1$ norm.
\begin{lem}\label{lem:approx} 
Let $\varphi:\bS\to\bR$ be an arbitrary probability density. Given any $\ve>0$, there exists $\psi\in\cup_{L>0}\cD_L$ such that 
\beqn
\|\varphi-\psi\|_{L^1} \le \ve\ .
\eeqn
\end{lem}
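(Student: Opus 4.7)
\smallskip
\noindent\textbf{Proof plan for Lemma~4.5.} The strategy is to approximate $\varphi$ in two steps: first smooth it to obtain a Lipschitz continuous density, then perturb by a small multiple of the constant density~$1$ in order to secure a strictly positive lower bound, which via Remark~4.3(iii) will place the result in some~$\cD_L$.

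\smallskip
Concretely, fix a smooth, nonnegative, symmetric mollifier $K_\delta:\bS\to\bR$ with $\int_\bS K_\delta\,\rd\fm = 1$, supported in a small arc of length~$\delta$ about~$0$. Set $\varphi_\delta = \varphi * K_\delta$. Standard properties of convolution on $\bS$ give that $\varphi_\delta$ is a probability density, $\varphi_\delta$ is Lipschitz continuous on~$\bS$ with $\mathrm{Lip}(\varphi_\delta)\le \|K_\delta'\|_\infty$, and $\|\varphi_\delta - \varphi\|_{L^1}\to 0$ as $\delta\to 0$. Hence one may choose $\delta>0$ so small that $\|\varphi_\delta - \varphi\|_{L^1}\le\ve/2$.

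\smallskip
Now, for a parameter $\alpha\in(0,1)$ to be chosen, define
\beqn
\psi = (1-\alpha)\varphi_\delta + \alpha\cdot 1\ .
\eeqn
Then $\psi$ is manifestly a probability density, $\psi\ge\alpha>0$ everywhere, and $\psi$ is Lipschitz continuous on~$\bS$ with $\mathrm{Lip}(\psi)\le (1-\alpha)\mathrm{Lip}(\varphi_\delta)$. By Remark~4.3(i), $\psi$ is also Lipschitz continuous on $J_z$ (any~$z\in\bS$) with this same constant, and then Remark~4.3(iii) yields $\psi\in\cD_L$ with $L = \alpha^{-1}\mathrm{Lip}(\psi)$. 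Finally, since~$1$ and $\varphi$ are both probability densities, $\|1-\varphi\|_{L^1}\le 2$, so
\beqn
\|\psi - \varphi\|_{L^1} \le (1-\alpha)\|\varphi_\delta - \varphi\|_{L^1} + \alpha\,\|1-\varphi\|_{L^1} \le \ve/2 + 2\alpha \ .
\eeqn
Choosing, in order, $\alpha\le\ve/4$ and then $\delta$ small as above completes the proof.

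\smallskip
There is no real obstacle; the only subtlety is that the natural candidate $\psi = \varphi_\delta$ alone need not be bounded below (even with a positive mollifier, $\varphi$ itself may vanish on a large set and the infimum of~$\varphi_\delta$ may be tiny and hard to control explicitly), so the convex combination with the uniform density is used precisely to enforce the quantitative lower bound that the definition of $\cD_L$ demands through the logarithm.
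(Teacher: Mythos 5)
Your proof is correct, and it takes a genuinely different route from the paper's. The paper first invokes the $L^1$-density of continuous functions (Rudin, Theorem~3.14) to approximate $\varphi$ by a nonnegative continuous $g$, adds a small constant to force strict positivity, then approximates by a strictly positive polynomial via Weierstrass, and finally renormalizes (with an extra error estimate coming from $|\fm(g_2)-1|$). You instead mollify $\varphi$ directly to get a Lipschitz density $\varphi_\delta$, and then take a convex combination $(1-\alpha)\varphi_\delta + \alpha\cdot 1$ with the uniform density to secure the lower bound $\psi\ge\alpha$ needed for Remark~\ref{remark:log_Lip}(iii). Both arguments are sound; the chief advantage of yours is that the convex combination automatically preserves the normalization (so no renormalization step or bookkeeping of $|\fm(g_2)-1|$ is needed) and it bypasses both the Rudin and Weierstrass citations in favour of the single standard fact that mollification converges in $L^1$ and produces a function with $\mathrm{Lip}(\varphi_\delta)\le\|K_\delta'\|_\infty$. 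The paper's route, conversely, produces a polynomial approximant, but that extra regularity is not exploited anywhere. Your closing observation is also correct and worth keeping: mollification alone does not supply the uniform positive lower bound that the $\log$ in the definition of $\cD_L$ requires, which is exactly the role the $\alpha\cdot 1$ term plays.
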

\begin{proof}
Let $\varepsilon >0$. By~\cite[Theorem~3.14]{Rudin_1987}, continuous
functions on~$J_0 = \bS\setminus\{0\}$ are dense in~$L^1$, so we can approximate~$\varphi$ in~$L^1$  by a
non-negative continuous function~$g$ such that~$\| \varphi - g \|_{L^1} \le
\frac\ve6$.  Then~$g_1 = g + \frac\ve6$ is strictly positive. By
the Weierstrass approximation theorem,~$g_1$ can be
approximated by a strictly positive polynomial~$g_2$ such that~$\|g_1-g_2\|_{L^1}\le\frac\ve6$. Collecting,~$\| \varphi -
g_2 \|_{L^1} \le \frac\ve2$, which also implies $|1-\fm(g_2)|\le\frac\ve2$. Note that~$\psi = \fm(g_2)^{-1}g_2\in\cup_{L>0}\cD_L$; see item~(iii) of Remark~\ref{remark:log_Lip}. Moreover, 
\beqn
\|\varphi-\psi\|_{L^1}\le \| \varphi-g_2 \|_{L^1} + \|g_2 - \fm(g_2)^{-1} g_2\|_{L^1} \le \frac\ve2 + |\fm(g_2)-1| \le \ve\ ,
\eeqn
which is the desired bound.
\end{proof}

\medskip
\section{Preliminaries I: densities, composition and decorrelation}\label{sec:preliminariesI}
The conceptually important results of this section are: Lemma~\ref{lem:D_stability}, which gives uniform control of the Lipschitz constants for the densities of pushforwards; Lemma~\ref{lem:combineddensities}, which says that the pushforward density $\rho_{n, \round{nt}}$ is pretty close to the SRB density $\hat \rho_s$, provided the system has been running for a while and provided $s$ is close to $t$; and the exponential decorrelation results of Corollary~\ref{cor:memory_bounds} and Lemma~\ref{lem:dec}. 

\medskip
Recall that $1 < \lambda \leq T'$ and $\|T''\|_\infty \leq A_* < \infty$ for all maps $T \in \cM$. 

\medskip
\begin{lem} \label{lem:T2T}
    Denoting by $\cT$ any composition of a finite sequence of maps from $\cM$, 
\begin{equation}\label{eq:T2T}
    \sup_\cT \sup_{x \in \bS} \left|\frac{\cT''(x)}{\cT'(x)^2}\right| < \infty\ .
    \end{equation}
    Moreover, there exists $C >0$ such that, for all such $\cT$, if $I$ is an arc on which $\cT$ is diffeomorphic, 
    \begin{equation}\label{eq:Tdistn}
    \left| \frac{\cT'(x)}{\cT'(y)} -1 \right| \le C\, |\cT(I_{x,y})|
    \end{equation}
    for all $x,y \in I$, where $I_{x,y}$ denotes the subarc of $I$ connecting~$x$ and~$y$.
    \end{lem}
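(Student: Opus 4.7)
The proof is a standard distortion estimate, and my plan is to derive part (i) by combining the chain rule with the uniform expansion bound, then deduce part (ii) from (i) by a logarithmic change of variables.

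For part (i), write $\cT = T_n \circ \cdots \circ T_1$ with each $T_i \in \cM$, and set $x_0 = x$, $x_i = T_i(x_{i-1})$. Taking logarithms of $\cT'(x) = \prod_{i=1}^n T_i'(x_{i-1})$ and differentiating gives
\beqn
\frac{\cT''(x)}{\cT'(x)^2} = \sum_{i=1}^n \frac{T_i''(x_{i-1})}{T_i'(x_{i-1})} \cdot \frac{1}{\prod_{j=i}^n T_j'(x_{j-1})}\ ,
\eeqn
since the derivative of the $i$th summand $\log T_i'(x_{i-1})$ with respect to $x$ equals $\frac{T_i''(x_{i-1})}{T_i'(x_{i-1})}\prod_{j=1}^{i-1}T_j'(x_{j-1})$, and division by $\cT'(x)$ leaves the reciprocal of the tail product. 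Because $|T_i''|\le A_*$ and $T_j' \ge \lambda > 1$ uniformly, each summand is bounded by $\frac{A_*}{\lambda}\,\lambda^{-(n-i+1)}$, and summing the geometric series gives the uniform bound $\frac{A_*}{\lambda-1}$ independent of $n$ and the choice of $T_i$.

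For part (ii), use that on the arc $I$ the function $\cT$ is a diffeomorphism, so by the fundamental theorem of calculus
\beqn
\log\frac{\cT'(x)}{\cT'(y)} = \int_{I_{x,y}} \frac{\cT''(s)}{\cT'(s)^2}\,\cT'(s)\,\rd s\ .
\eeqn
Changing variables $u = \cT(s)$ turns the right-hand side into an integral over $\cT(I_{x,y})$ of a function bounded, by part (i), by some absolute constant $K$. Therefore $\bigl|\log[\cT'(x)/\cT'(y)]\bigr| \le K\,|\cT(I_{x,y})|$. Since $|\cT(I_{x,y})|\le 1$, the elementary inequality $|e^a - 1| \le |a|e^{|a|}$ with $a = \log[\cT'(x)/\cT'(y)]$ yields
\beqn
\left|\frac{\cT'(x)}{\cT'(y)}-1\right| \le K e^{K}\,|\cT(I_{x,y})|\ ,
\eeqn
giving the desired bound with $C = K e^K$.

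There is no real obstacle here; the only point to be careful about is getting the chain-rule formula right and noticing the clean geometric-series structure that makes the estimate uniform in the length of the composition. Both conclusions then follow from the two input hypotheses $\inf T'\ge\lambda$ and $\|T''\|_\infty \le A_*$.
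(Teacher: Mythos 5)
Your proof is correct and follows essentially the same route as the paper's: part (i) via the chain rule for $\log\cT'$ and a geometric series bound, part (ii) via the fundamental theorem of calculus and the change of variables $u=\cT(s)$, followed by the elementary inequality $|e^a-1|\le|a|e^{|a|}$. The only cosmetic difference is the form of the constant in (i) (the paper absorbs the factor of $1/T_j'$ slightly differently, but both yield a uniform bound), so there is nothing to add.
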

    
    \medskip
    \begin{proof}
        Suppose $\cT = T_k \circ \cdots \circ T_1$, where $T_j \in \cM$. For $j \geq 2$, write $P_j = T_{j-1} \circ \cdots \circ T_1$ and let $P_1$ be the identity map. Then $$\cT''(x) = \cT'(x)\sum_{j=1}^{k} \frac{T_j''(P_j(x))}{T'_j(P_j(x))} P_j'(x)\ ,$$ 
        so, noting $|P_j'(x)/\cT'(x)| \leq \lambda^{j-1 -k}$,
        $$\left| \frac{\cT''(x)}{\cT'(x)^2}\right| \leq  A_*\sum_{j =0}^{k-1} \lambda^{j-k}\ ,$$ from which \eqref{eq:T2T} follows. 

	By the change-of-variables formula and \eqref{eq:T2T},
	$$
	\left| \log \frac{\cT'(x)}{\cT'(y)} \right| = \biggl| \int_{I_{x,y}} \frac{\cT''}{\cT'} \,\rd\fm \biggr|  \leq \int_{\cT (I_{x,y})} \left\| \frac{ \cT''}{(\cT')^2}\right \|_\infty  \rd \fm \leq C\, |\cT(I_{x,y})|\ . $$
	Now \eqref{eq:Tdistn} follows from the estimate $|u-1| \le e^{|{\log u}|}|{\log u}|$, $u>0$.
\end{proof}

\medskip
\begin{lem}\label{lem:D_stability}
There exists $L_*>1$ such that the following hold:

\medskip
\noindent (i) 
For all $L\ge L_*$ and $T\in\cM$,
\beqn
\cL_T\cD_L \subset \cD_{L} \ .
\eeqn

\medskip
\noindent (ii) Given $L>L_*$, there exists a constant $N(L)$ such that
\beqn
\cL_k\cdots\cL_1\cD_L \subset \cD_{L_*}\ , \quad  k\ge N(L)\ ,
\eeqn
for any sequence of maps $(T_i)\subset\cM$.

\medskip
\noindent (iii) Let $L\ge L_*$ and $z\in\bS$. Suppose $\psi\in\cD_L$ with $|\log\psi(x)-\log\psi(y)|\le L|x-y|_z$ for~$x,y\in J_z$. Let $\cT$ be a composition of $k$  maps from $\cM$. Let $I\subset J_z$ be a subarc such that~$\cT$ maps~$I$ diffeomorphically onto $J_{\cT z}$. Then the conditional probability density $\psi_I = |{\int_I \psi\,\rd\fm}|^{-1}\psi 1_I$ satisfies
\beqn
\cL_\cT(\psi_I) \in \cD_L\ ,
\eeqn
the (possible) jump being located at $\cT z$. If $k\ge N(L)$, then $\cL_\cT(\psi_I) \in \cD_{L_*}$.
\end{lem}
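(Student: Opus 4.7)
The plan is to verify each item by directly computing the Lipschitz norm of $\log$ applied to the pushforward density on the relevant subarc, and to make a single choice of $L_*$ that serves all three parts simultaneously. A preliminary observation I would make is that for $\psi\in\cD_L$ with jump at $z$, the density $\cL_T\psi$ acquires at worst a jump at $Tz$: at any $x \in J_{Tz}$ the preimages of $x$ under $T$ all avoid $z$, so locally smooth inverse branches $y_i$ realize
\beqn
\cL_T\psi(x) = \sum_i \frac{\psi(y_i(x))}{T'(y_i(x))}
\eeqn
as a finite sum of smooth positive terms, while exactly one branch, the one with $y_i(Tz)=z$, can carry the discontinuity.

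For part (i), I would differentiate the preceding sum on $J_{Tz}$ to produce the convex combination
\beqn
\frac{(\cL_T\psi)'(x)}{\cL_T\psi(x)} = \sum_i \frac{w_i(x)}{\cL_T\psi(x)}\!\left[\frac{(\log\psi)'(y_i(x))}{T'(y_i(x))} - \frac{T''(y_i(x))}{T'(y_i(x))^2}\right], \quad w_i = \frac{\psi(y_i)}{T'(y_i)}\ ,
\eeqn
the weights being positive and summing to one. Plugging in the hypotheses $|(\log\psi)'|\le L$, $T'\ge\lambda$, $|T''|\le A_*$ term-by-term gives $|(\log \cL_T\psi)'| \le L/\lambda + A_*/\lambda^2$ on $J_{Tz}$, whence $\log\cL_T\psi$ is Lipschitz there with the same constant. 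This bound is at most $L$ provided $L\ge A_*/(\lambda(\lambda-1))$, settling (i). For (ii), iterating produces the linear recursion $L_{k+1}\le L_k/\lambda + A_*/\lambda^2$ whose unique fixed point is precisely $A_*/(\lambda(\lambda-1))$ and toward which $L_k$ converges geometrically; taking $L_*$ strictly above this fixed point then forces $L_k\le L_*$ for all $k\ge N(L)$, with $N(L)$ read off by solving the recursion explicitly.

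For part (iii), since $\cT|_I$ is a diffeomorphism onto $J_{\cT z}$, the only contributing preimage is $y_*=(\cT|_I)^{-1}$, giving $\cL_\cT(\psi_I)(x) = \psi_I(y_*(x))/\cT'(y_*(x))$ on $J_{\cT z}$; the normalization in $\psi_I$ is just a multiplicative constant, so $(\log\psi_I)'=(\log\psi)'$ on $I$. Direct differentiation yields
\beqn
(\log\cL_\cT(\psi_I))'(x) = \frac{(\log\psi)'(y_*(x))}{\cT'(y_*(x))} - \frac{\cT''(y_*(x))}{\cT'(y_*(x))^2}\ .
\eeqn
Using $\cT'\ge\lambda^k$ the first term has absolute value at most $L\lambda^{-k}$, while the proof of Lemma~\ref{lem:T2T} in fact delivers the sharper estimate $\sup_x|\cT''(x)/\cT'(x)^2|\le A_*\sum_{j=1}^{k}\lambda^{-j} = A_*(1-\lambda^{-k})/(\lambda-1)$. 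Adding and requiring $L\lambda^{-k} + A_*(1-\lambda^{-k})/(\lambda-1) \le L$ simplifies uniformly in $k$ to the condition $L\ge A_*/(\lambda-1)$, which yields the first claim of (iii). The $\cD_{L_*}$ conclusion follows from the same estimate since the bound tends to $A_*/(\lambda-1)$, strictly below $L_*$ once $L_*$ has been chosen above $A_*/(\lambda-1)$.

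To reconcile all three parts, I would simply fix $L_*$ to be any value strictly greater than $\max(1,A_*/(\lambda-1))$; note this automatically exceeds $A_*/(\lambda(\lambda-1))$ as well. I do not anticipate any serious obstacle: the computations are elementary, and the delicate points are (a)~localizing the jump of the pushforward density to a single point, which relies on the covering-map structure of $T$, and (b)~securing uniformity over $T\in\cM$ and over compositions $\cT$, for which Lemma~\ref{lem:T2T} is tailor-made.
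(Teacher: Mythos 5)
Your proof is correct and arrives at the same result via a closely related but slightly different computation. Where the paper bounds the finite difference $|\log\cL_{\tau_m}\psi(x)-\log\cL_{\tau_m}\psi(y)|$ branch by branch and then uses the sandwich $e^{-r}\cL_{\tau_m}\psi(x)\le\cL_{\tau_m}\psi(y)\le e^r\cL_{\tau_m}\psi(x)$, summed over $m$, to pass the ratio bound to $\cL_\cT\psi$, you differentiate the transfer-operator sum to exhibit $(\log\cL_T\psi)'$ as a convex combination of terms each bounded by $L/\lambda + A_*/\lambda^2$. This is a nice ``infinitesimal'' version of the same inequality, arguably making the contraction mechanism more transparent. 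A second organizational difference: you establish the single-map contraction and iterate to get (i) and (ii), whereas the paper handles a general composition $\cT$ from the start using the uniform bound on $\|\cT''/(\cT')^2\|_\infty$ from Lemma~\ref{lem:T2T}; both choices of $L_*$ are valid.

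Two small points worth tightening in a write-up. First, $\psi\in\cD_L$ only guarantees that $\log\psi$ is Lipschitz, hence differentiable almost everywhere, so the convex-combination identity for $(\log\cL_T\psi)'$ holds a.e.\ and one recovers the Lipschitz bound for $\log\cL_T\psi$ by absolute continuity; the paper's finite-difference computation sidesteps this issue. Second, in your preliminary observation the preimages of $Tz$ under $T$ form $w_T$ points (including $z$) that subdivide $\bS$ into $w_T$ arcs, each mapped diffeomorphically onto $J_{Tz}$ by an inverse branch, and the point is that \emph{every} branch's image avoids $z$ in its interior; the only possible discontinuity of $\cL_T\psi$ is therefore at $Tz$, which is exactly the data the definition of $\cD_L$ requires. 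Neither point is a gap in the argument, just places where a careful reader will want the details filled in.
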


\medskip
\begin{proof}
Let $L>0$ and assume $\psi\in\cD_L$, with $z$ as in the definition above. Let $\cT$ be a composition of $k$ maps from $\cM$, for some $k \geq 1$. We can subdivide $J_z$ into intervals $I_1,\dots,I_j$ such that $\tau_m = \cT|I_m:I_m\to J_{\cT z}$ is smooth and bijective for $1\le m\le j$. Then
\beqn
\cL_\cT\psi = \sum_{m=1}^j \cL_{\tau_m}\psi\ ,
\eeqn
where $\cL_{\tau_m}\psi(x) = \frac{\psi}{\cT'}(\tau_m^{-1}x)$.
Moreover, using \eqref{eq:T2T}, for some $K < \infty$,
\beqn
\begin{split}
& |\log \cL_{\tau_m}\psi (x) - \log \cL_{\tau_m}\psi (y)| 
\\
&\qquad \le |\log \psi(\tau_m^{-1}x) - \log\psi(\tau_m^{-1}y) | + |\log \cT'(\tau_m^{-1}x) - \log \cT'(\tau_m^{-1}y) |
\\
&\qquad \le L|\tau_m^{-1}x-\tau_m^{-1}y|_z + \|\cT''/(\cT')^2\|_\infty|x-y|_{\cT z}
\\
&\qquad \le (\lambda^{-n}L + K)|x-y|_{\cT z} \equiv r\ .
\end{split}
\eeqn
Hence $e^{-r} \cL_{\tau_m}\psi (x) \le  \cL_{\tau_m}\psi (y) \le e^r \cL_{\tau_m}\psi (x) $. Summing over $m$ yields
\beqn
\cL_\cT\psi \in \cD_{\lambda^{-n}L + K} \ , \quad \psi\in\cD_L\ .
\eeqn
Fix $L_* = (1-\lambda^{-1})^{-1}K > K$. Then parts~(i) and~(ii) of the lemma hold. As for part~(iii),  $\cL_\cT(\psi_I) = \cL_{\tau_m}(\psi_I)$, for $I_m = I$.  Since
$$ 
 |\log \cL_{\cT}\psi_I (x) - \log \cL_{\cT}\psi_I (y)| =
 |\log \cL_{\tau_m}\psi (x) - \log \cL_{\tau_m}\psi (y)| \leq r, $$
 and since $\cL_\cT \psi_I$ is a probability density, we obtain part~(iii).
\end{proof}

\medskip
\begin{lem}\label{lem:LY}
There exists $C>0$ such that 
\beq\label{eq:LTn1}
\|\cL_k\cdots \cL_1 1\|_\infty \le C \ ,
\eeq
\beq\label{eq:LY0}
\|\cL_k\cdots \cL_1\|_{C^0\to C^0} \le C \ ,
\eeq
\beq\label{eq:LY1}
\|\cL_k\cdots \cL_1\|_{\mathrm{Lip}\to\mathrm{Lip}} \le C \ ,
\eeq
and
\beq\label{eq:LY}
\|\cL_k\cdots \cL_1 h\|_{\mathrm{Lip}} \le C\lambda^{-k}\|h\|_{\mathrm{Lip}} + C\|h\|_{C^0} \ .
\eeq
\end{lem}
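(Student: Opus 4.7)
The plan is to prove the four estimates in one unified argument by working with $\cT = T_k \circ \dots \circ T_1$ and its composed transfer operator $\cL_\cT = \cL_k \cdots \cL_1$, using the distortion control from Lemma~\ref{lem:T2T} throughout. All constants will be uniform because Lemma~\ref{lem:T2T} is uniform in the sequence of maps.

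\medskip\noindent\textbf{Step 1: the invariance bound \eqref{eq:LTn1}.} Fix $z\in\bS$ and decompose $J_z = \bS\setminus\{z\}$ into the finitely many maximal subarcs $I_m$ on which $\cT$ restricts to a diffeomorphism $\tau_m : I_m \to J_{\cT z}$. Then
\beqn
\cL_\cT 1(x) = \sum_m \frac{1}{\cT'(\tau_m^{-1}x)}.
\eeqn
By the mean value theorem there exists $\xi_m \in I_m$ with $\cT'(\xi_m) = |J_{\cT z}|/|I_m| = 1/|I_m|$, and the distortion bound \eqref{eq:Tdistn} yields a uniform constant $c>0$ with $\cT'(\tau_m^{-1}x) \ge c/|I_m|$ for every $x$. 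Summing and using $\sum_m |I_m| \le 1$ proves \eqref{eq:LTn1}.

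\medskip\noindent\textbf{Step 2: the $C^0$ bound \eqref{eq:LY0}.} Since $|\cL_\cT h| \le \|h\|_\infty\, \cL_\cT 1$, the estimate \eqref{eq:LY0} is immediate from \eqref{eq:LTn1}.

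\medskip\noindent\textbf{Step 3: the Lasota--Yorke inequality \eqref{eq:LY}.} This is the key step. For $x,y\in\bS$ sufficiently close so that they lie in the image of the same collection of branches, write
\beqn
\cL_\cT h(x)-\cL_\cT h(y) = \sum_m \left[\frac{h(\tau_m^{-1}x)-h(\tau_m^{-1}y)}{\cT'(\tau_m^{-1}x)} + h(\tau_m^{-1}y)\!\left(\frac{1}{\cT'(\tau_m^{-1}x)}-\frac{1}{\cT'(\tau_m^{-1}y)}\right)\right].
\eeqn
For the first summand, combine (a)~$\cT'(\tau_m^{-1}x)\ge \lambda^k$, which converts one factor of $1/\cT'$ into $\lambda^{-k}$, with (b)~the distortion bound $|\tau_m^{-1}x-\tau_m^{-1}y|\le C\,|x-y|/\cT'(\tau_m^{-1}x)$ obtained from \eqref{eq:Tdistn}, to get
\beqn
\left|\frac{h(\tau_m^{-1}x)-h(\tau_m^{-1}y)}{\cT'(\tau_m^{-1}x)}\right| \le \frac{C\lambda^{-k}\mathrm{Lip}(h)}{\cT'(\tau_m^{-1}x)}\,|x-y|.
\eeqn
For the second summand, apply \eqref{eq:Tdistn} directly in the form $|\cT'(\tau_m^{-1}x)/\cT'(\tau_m^{-1}y)-1|\le C|x-y|$ to obtain
\beqn
\left|\frac{1}{\cT'(\tau_m^{-1}x)}-\frac{1}{\cT'(\tau_m^{-1}y)}\right|\le \frac{C|x-y|}{\cT'(\tau_m^{-1}x)}.
\eeqn
Summing over $m$ and using \eqref{eq:LTn1} as a majorant for $\sum_m 1/\cT'(\tau_m^{-1}x)$ gives
\beqn
|\cL_\cT h(x)-\cL_\cT h(y)| \le \bigl(C\lambda^{-k}\mathrm{Lip}(h) + C\|h\|_\infty\bigr)|x-y|.
\eeqn
For $x,y$ with $|x-y|$ larger than some absolute constant, the inequality is trivial from \eqref{eq:LY0} at the cost of enlarging $C$. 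This proves \eqref{eq:LY}, and \eqref{eq:LY1} follows by combining \eqref{eq:LY} with \eqref{eq:LY0}.

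\medskip\noindent\textbf{Main obstacle.} The delicate point is the Lasota--Yorke estimate \eqref{eq:LY}: the contraction factor $\lambda^{-k}$ has to be extracted in a way that still leaves behind a factor of $1/\cT'$ summable by \eqref{eq:LTn1}. The choice of splitting above is tailored precisely so that, after using distortion, each summand has exactly one surviving copy of $1/\cT'(\tau_m^{-1}x)$, which is what makes the series telescope against $\cL_\cT 1$. A secondary nuisance is the circle topology, which forces one to treat the case of ``distant'' $x,y$ separately via the crude $C^0$ bound; this is routine once \eqref{eq:LY0} is in hand.
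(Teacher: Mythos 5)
Your proof is correct, and the decomposition in Step~3 is identical to the paper's, which splits
$\frac{h(x_{i,-k})}{\cT'(x_{i,-k})} - \frac{h(y_{i,-k})}{\cT'(y_{i,-k})}$
into a Lipschitz term and a distortion term and sums against $\cL_\cT 1$. The one genuine divergence is Step~1: the paper proves \eqref{eq:LTn1} by invoking Lemma~\ref{lem:D_stability}(i) (iterating transfer operators preserves $\cD_{L_*}$, and members of $\cD_{L_*}$ are bounded by $e^{L_*}$), whereas you give a direct MVT-plus-distortion argument ($\cT'(\xi_m)=1/|I_m|$ for some $\xi_m$, bounded distortion gives $\cT'\gtrsim 1/|I_m|$ throughout $I_m$, and $\sum_m|I_m|\le 1$). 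Both rest on Lemma~\ref{lem:T2T}; yours is more self-contained and makes the geometry transparent, while the paper's reuses machinery already built. For the precise lower-bound constant in your Step~1 you should really appeal to the log-ratio bound $|\log(\cT'(x)/\cT'(y))|\le C$ from the \emph{proof} of Lemma~\ref{lem:T2T}, or apply \eqref{eq:Tdistn} with $x,y$ swapped to get $\cT'(x)/\cT'(y)\ge 1/(1+C)$; as written, $1-C$ need not be positive. A second, cosmetic difference: in the first summand of Step~3 you obtain the factor $\lambda^{-k}$ by composing a distortion estimate on $|\tau_m^{-1}x-\tau_m^{-1}y|$ with $\cT'\ge\lambda^k$, which introduces an unneeded extra constant; the paper uses the cleaner direct contraction $|\tau_m^{-1}x-\tau_m^{-1}y|\le\lambda^{-k}d(x,y)$ of inverse branches, leaving one $1/\cT'$ to be summed. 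Either way the estimate closes.
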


\medskip
\begin{proof}
    Lemma~\ref{lem:D_stability}(i) implies $\cL_k\cdots \cL_1 1 \in \cD_{L_*}$, which implies~\eqref{eq:LTn1}, from which~\eqref{eq:LY0} follows immediately. 
    Now let $h$ be  Lipschitz continuous.  Let $\cT = T_k \circ \cdots \circ T_1$.
    Let $x_{i,-k}$ and $y_{i,-k}$ denote the preimages of $x$ and $y$ under the same ($i$th) branch of the inverse of $\cT$. (The inverse branches are defined relative to the shorter arc connecting $x$ and $y$.)
Recalling~\eqref{eq:Tdistn}, we deduce
\beqn
\begin{split}
| \cL_{\cT}h(x) - \cL_{\cT}h(y) | 
& \le \sum_{i} \left| \frac{h(x_{i,-k})}{\cT'(x_{i,-k})} - \frac{h(y_{i,-k})}{\cT'(y_{i,-k})} \right|
\\
& \le \sum_{i} \left| \frac{h(x_{i,-k}) - h(y_{i,-k})}{\cT'(x_{i,-k})} + \frac{h(y_{i,-k})}{\cT'(y_{i,-k})}\left[\frac{\cT'(y_{i,-k})}{\cT'(x_{i,-k})} - 1\right] \right|
\\
& \le \lambda^{-k}\mathrm{Lip}(h)d(x,y) \sum_i\frac{1}{\cT'(x_{i,-k})} + Cd(x,y)\sum_{i} \frac{|h|(y_{i,-k})}{\cT'(y_{i,-k})}
\\
& \le \left(\lambda^{-k}\mathrm{Lip}(h) \cL_{\cT}1(x) + C\cL_{\cT}|h|(y) \right) d(x,y)
\\
& \le \left(\lambda^{-k}\mathrm{Lip}(h) + C\|h\|_\infty \right) \|\cL_{\cT}1\|_\infty d(x,y) \ .
\end{split}
\eeqn
Noting that $\|\cL_{\cT}1\|_\infty$ is uniformly bounded by~\eqref{eq:LTn1}, we have
\beqn
\mathrm{Lip}(\cL_{\cT} h) \le C(\lambda^{-k}\mathrm{Lip}(h) + \|h\|_\infty) \ ,
\eeqn
which implies~\eqref{eq:LY}. Clearly~\eqref{eq:LY1} is a consequence of~\eqref{eq:LY}. Let us note that, if $h$ is allowed to have a jump at $z$, that is, if $h$ is only assumed to be Lipschitz continuous with constant~$L>0$ say on $J_{z}$, 
\beq 
\label{eq:Liphz}
| \cL_{\cT}h(x) - \cL_{\cT}h(y) | \leq C(\lambda^{-k}L + \|h\|_\infty) |x - y|_{\cT z} \ .
\eeq
\end{proof}

\medskip
\begin{lem}\label{lem:Lip_stability}
Suppose a probability density~$\psi$ is 
Lipschitz continuous on~$J_z$ with constant~$L$ for some~$z\in\bS$. Let $\cT$ be a composition of $k$ maps from $\cM$.  Then $\cL_{\cT} \psi$ is Lipschitz continuous on~$J_{\cT z}$ with constant~$C(1+L)$. Here~$C>0$ is a uniform constant.
\end{lem}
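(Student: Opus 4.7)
My plan is to piggy-back on the calculation already carried out inside the proof of Lemma~\ref{lem:LY}, specifically the estimate \eqref{eq:Liphz}. That estimate was derived for an arbitrary function $h$ Lipschitz continuous on $J_z$ with constant $L$, and gives
\[
|\cL_\cT h(x)-\cL_\cT h(y)| \le C(\lambda^{-k}L+\|h\|_\infty)\,|x-y|_{\cT z}.
\]
So if I can control $\|\psi\|_\infty$ in terms of $L$, I will be done.

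First, I would argue that any probability density $\psi$ which is Lipschitz continuous on $J_z$ with constant $L$ must satisfy $\|\psi\|_\infty\le 1+L$. Indeed, $\psi$ is continuous on the connected arc $J_z$ and has mean value $1$ on $\bS$, hence by the intermediate value theorem there exists $x_0\in J_z$ with $\psi(x_0)=1$; the Lipschitz bound on $J_z$ together with $|x-x_0|_z\le 1$ then gives $|\psi(x)-1|\le L$ for every $x\in J_z$, so $\|\psi\|_\infty\le 1+L$. This step is essentially the same observation made in Remark~\ref{remark:log_Lip}(ii).

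Next, I apply \eqref{eq:Liphz} with $h=\psi$, obtaining for all $x,y\in J_{\cT z}$
\[
|\cL_\cT\psi(x)-\cL_\cT\psi(y)| \le C\bigl(\lambda^{-k}L+\|\psi\|_\infty\bigr)\,|x-y|_{\cT z} \le C\bigl(\lambda^{-k}L+1+L\bigr)\,|x-y|_{\cT z}.
\]
Since $\lambda^{-k}L+1+L\le 2(1+L)$, absorbing the factor $2$ into the constant yields
\[
|\cL_\cT\psi(x)-\cL_\cT\psi(y)| \le C(1+L)\,|x-y|_{\cT z},
\]
which is precisely the claim. (The jump is at $\cT z$ for the same reason as in the proof of Lemma~\ref{lem:D_stability}, namely that the branches of $\cT^{-1}$ are smooth and bijective between $J_{\cT z}$ and the subarcs $I_m\subset J_z$.)

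I do not expect any real obstacle here; the main content is already encoded in the intermediate inequality \eqref{eq:Liphz}, and the only genuinely new observation is the a priori $L^\infty$ bound $\|\psi\|_\infty\le 1+L$ for a Lipschitz probability density on $J_z$. If one wanted to avoid invoking \eqref{eq:Liphz} (which was tucked inside another proof), one could simply redo the inverse-branch computation verbatim, using $\mathrm{Lip}_{J_z}(\psi)\le L$ to bound $|\psi(x_{i,-k})-\psi(y_{i,-k})|$ on each branch and the distortion bound \eqref{eq:Tdistn} to handle the second term; this yields the same estimate and makes the proof self-contained.
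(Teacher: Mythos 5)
Your proposal is correct and follows exactly the paper's own (very terse) proof: the paper simply observes $\|\psi\|_\infty\le 1+L$ and invokes \eqref{eq:Liphz}. You have merely spelled out the supporting details — the intermediate-value argument giving $\psi(x_0)=1$ for some $x_0\in J_z$, the bound $|x-x_0|_z\le 1$, and the absorption $\lambda^{-k}L+1+L\le 2(1+L)$ — all of which are correct and implicit in the paper's one-line argument and Remark~\ref{remark:log_Lip}(ii).
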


\medskip
\begin{proof}
Observing that $\|\psi\|_\infty\le 1+L$, the result is a consequence of 
 \eqref{eq:Liphz}.
\end{proof}

\medskip
\begin{lem}\label{lem:transfer_pert}
There exists a constant $C>0$ such that
for any $T_1, T_2 \in \cM$,
\beq\label{eq:T1T2_regularity}
\|\cL_1 - \cL_2 \|_{\mathrm{Lip} \to C^0} \le C d_{C^1}(T_1, T_2) \ .
\eeq
In particular,
\beq\label{eq:transfer_parameter_regularity}
\|\cL_t - \cL_s \|_{\mathrm{Lip} \to C^0} \le C|t-s|^\eta \ .
\eeq
Let $T_i, \tilde T_i \in \cM,\ 1\leq i \leq k$ with associated transfer operators $\cL_i, \tilde \cL_i$. Then
\beq\label{eq:telescope_bound}
\|\cL_{k}\cdots\cL_{1} - \tilde \cL_{k}\cdots\tilde\cL_{1}\|_{\mathrm{Lip} \to C^0} \le C\sum_{j=1}^k d_{C^1}(T_j, \tilde T_j)
\eeq
holds uniformly.
\end{lem}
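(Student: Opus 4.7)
The plan is to prove \eqref{eq:T1T2_regularity} as the substantive step; \eqref{eq:transfer_parameter_regularity} is immediate from it and the H\"older hypothesis on $\gamma$, and \eqref{eq:telescope_bound} is a standard telescoping reduction.

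For \eqref{eq:T1T2_regularity}, first observe that if $d_{C^1}(T_1,T_2)\ge \tfrac12$ the conclusion is vacuous: by \eqref{eq:LY0} we have $\|\cL_j h\|_\infty\le C\|h\|_{\mathrm{Lip}}$, so the bound holds by enlarging the constant. Assume therefore that $d_{C^1}(T_1,T_2)<\tfrac12$. Since each $T_j\in\cM$ is orientation-preserving with $\deg T_j=\int_\bS T_j'\,\rd\fm$ and $\|T_1'-T_2'\|_\infty<\tfrac12$, the integer degrees must coincide; call the common value $d$. Hence the preimages $\{y_i^{(j)}\}_{i=1}^d$ of any $x\in\bS$ under $T_j$ can be ordered cyclically and, for $d_{C^1}(T_1,T_2)$ small, paired so that $y_i^{(1)}$ and $y_i^{(2)}$ lie on corresponding monotone branches. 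From $T_1(y_i^{(1)})=T_2(y_i^{(2)})=x$ and $\inf T_1'\ge\lambda$,
\[
d(y_i^{(1)},y_i^{(2)})\;\le\;\lambda^{-1}|T_1(y_i^{(2)})-T_2(y_i^{(2)})|\;\le\;\lambda^{-1}d_{C^1}(T_1,T_2).
\]

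Now I would write
\[
(\cL_1 h-\cL_2 h)(x)=\sum_{i=1}^d\!\left[\frac{h(y_i^{(1)})-h(y_i^{(2)})}{T_1'(y_i^{(1)})}+h(y_i^{(2)})\!\left(\frac{1}{T_1'(y_i^{(1)})}-\frac{1}{T_2'(y_i^{(2)})}\right)\!\right]
\]
and estimate the two pieces separately. The first piece is bounded by $\lambda^{-1}\mathrm{Lip}(h)\,d_{C^1}(T_1,T_2)/T_1'(y_i^{(1)})$; summed over $i$ this gives at most $\lambda^{-1}\mathrm{Lip}(h)\,d_{C^1}(T_1,T_2)\cdot\cL_1 1(x)\le C\,\mathrm{Lip}(h)\,d_{C^1}(T_1,T_2)$ by \eqref{eq:LTn1}. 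For the second piece, I bound
\[
|T_1'(y_i^{(1)})-T_2'(y_i^{(2)})|\le \|T_1''\|_\infty d(y_i^{(1)},y_i^{(2)})+\|T_1'-T_2'\|_\infty\le C\,d_{C^1}(T_1,T_2),
\]
so using $T_1',T_2'\ge\lambda$ and summing over $i$ gives $C\|h\|_\infty d_{C^1}(T_1,T_2)\cdot \cL_2 1(x)\le C\|h\|_\infty d_{C^1}(T_1,T_2)$. Combining yields $|(\cL_1-\cL_2)h(x)|\le C\|h\|_{\mathrm{Lip}}\,d_{C^1}(T_1,T_2)$ uniformly in $x$.

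With \eqref{eq:T1T2_regularity} in hand, \eqref{eq:transfer_parameter_regularity} is just an application to $T_1=\gamma_t$, $T_2=\gamma_s$, using the assumed $\eta$-H\"older continuity of $\gamma$ which yields $d_{C^1}(\gamma_t,\gamma_s)\le C|t-s|^\eta$. For \eqref{eq:telescope_bound} I use the telescoping identity
\[
\cL_k\cdots\cL_1-\tilde\cL_k\cdots\tilde\cL_1=\sum_{j=1}^k \cL_k\cdots\cL_{j+1}\,(\cL_j-\tilde\cL_j)\,\tilde\cL_{j-1}\cdots\tilde\cL_1,
\]
and estimate each summand by reading it right-to-left as $\mathrm{Lip}\xrightarrow{\tilde\cL_{j-1}\cdots\tilde\cL_1}\mathrm{Lip}\xrightarrow{\cL_j-\tilde\cL_j}C^0\xrightarrow{\cL_k\cdots\cL_{j+1}}C^0$, where the outer factors are controlled uniformly by \eqref{eq:LY1} and \eqref{eq:LY0}, and the middle factor by \eqref{eq:T1T2_regularity}. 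Summing over $j$ delivers \eqref{eq:telescope_bound}.

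The only real obstacle is verifying the branch-pairing setup in \eqref{eq:T1T2_regularity}, i.e. confirming that within a small $C^1$-neighborhood the number of preimages is stable (via the degree argument) and that these preimages can be canonically matched with $d(y_i^{(1)},y_i^{(2)})=O(d_{C^1}(T_1,T_2))$; once that is set up, the computation is a routine telescoping together with the uniform bound on $\cL_T 1$ from \eqref{eq:LTn1}.
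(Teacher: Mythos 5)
Your proof is correct and follows essentially the same route as the paper's: split into $d_{C^1}$ large (trivial via \eqref{eq:LY0}) and small; canonically pair the inverse branches using the degree/winding-number argument; bound $d(y_i^{(1)},y_i^{(2)})$ by $\lambda^{-1}d_{C^1}(T_1,T_2)$ via the uniform expansion; decompose $(\cL_1-\cL_2)h$ into the difference-of-$h$ term and the difference-of-derivatives term; control the latter by further splitting $T_1'(y_i^{(1)})-T_2'(y_i^{(2)})$ into a $T_1''$-part and a $\|T_1'-T_2'\|_\infty$-part; sum using $\|\cL_T 1\|_\infty\le C$ from \eqref{eq:LTn1}; then conclude \eqref{eq:transfer_parameter_regularity} from H\"older continuity of $\gamma$ and \eqref{eq:telescope_bound} by the same telescoping identity with \eqref{eq:LY0} and \eqref{eq:LY1}. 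The only cosmetic difference is your threshold $\tfrac12$ versus the paper's $\tfrac18$ for the trivial case.
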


For obtaining perturbation estimates of the type in Lemma~\ref{lem:transfer_pert} it is paramount that the transfer operators act from a space of rather regular functions~(here $\mathrm{Lip}$) to a space of \emph{less} regular functions~(here $C^0$). See \cite{KellerLiverani_1999,GouezelLiverani_2006} for a general theory.

\medskip
\begin{proof}[Proof of Lemma~\ref{lem:transfer_pert}]
    Let $x \in \bS$ and let $B \subset \bS$ be the neighborhood of~$x$ of radius~$\frac14$. 
    Set $w_T = \int_\bS T'\,\rd\fm\in\bN$. 
For each $T \in\cM$ the $w_T$~inverse branches of $T$ on $B$ are well-defined. 
When $d_{C^1}(T_1, T_2) < \frac18$, there is a canonical correspondence between the inverse branches~$T_{1,i}^{-1}$ and~$T_{2,i}^{-1}$,
 $1 \leq i \leq w_{T_1}$, such that $T_{1,i}^{-1}(B)$ and $T_{2,i}^{-1}(B)$ overlap (in particular, $w_{T_1} = w_{T_2})$. 
 Let us denote $y_i = T_{1,i}^{-1}(x)$ and $\tilde y_i = T_{2,i}^{-1}(x)$. Then $T_2(\tilde y_i) = T_1(y_i)$ implies
\beqn
d(y_i,\tilde y_i) \le \lambda^{-1} d(T_2(y_i),T_2(\tilde y_i)) =  \lambda^{-1} d(T_2(y_i),T_1(y_i))\ .
\eeqn
Hence, the identity
\beqn
\begin{split}
(\cL_1-\cL_2)h(x) 
& = \sum_{i=1}^w \left(\frac{h(y_i)}{T_1'(y_i)} - \frac{h(\tilde y_i)}{T_2'(\tilde y_i)}\right)
\\
& = \sum_{i=1}^w \left[ \frac{h(y_i)-h(\tilde y_i)}{T_1'(y_i)} + h(\tilde y_i)  \left(\frac{1}{T_1'(y_i)} - \frac{1}{T_2'(\tilde y_i)}\right)\right]
\\
& = \sum_{i=1}^w \left[ \frac{h(y_i)-h(\tilde y_i)}{T_1'(y_i)} + h(\tilde y_i)  \left(\frac{T_1'(\tilde y_i)-T_1'(y_i)}{T_1'(y_i)T_1'(\tilde y_i)} + \frac{T_2'(\tilde y_i)-T_1'(\tilde y_i)}{T_2'(\tilde y_i)T_1'(\tilde y_i)}\right)\right]
\end{split}
\eeqn
yields, via \eqref{eq:LY0} to obtain the uniform $C$,
\beqn
\|(\cL_1-\cL_2)h\|_\infty \le C\|h\|_{\mathrm{Lip}}d_{C^1}(T_1,T_2) \ ,
\eeqn
provided $d_{C^1}(T_1,T_2) < \frac18$. For the case $d_{C^1}(T_1,T_2) \geq \frac18$, we can use \eqref{eq:LTn1}, giving
\beqn
\|(\cL_1-\cL_2)h\|_\infty \le 2\|h\|_\infty \sup_{T \in \cM} \|\cL_T 1 \|_\infty \leq C \| h \|_\infty\tfrac18 \leq C \|h\|_{\mathrm{Lip}} d_{C^1}(T_1,T_2) \ .
\eeqn
The above bounds prove~\eqref{eq:T1T2_regularity}. By H\"older continuity of $\gamma$,~\eqref{eq:transfer_parameter_regularity} clearly holds. It now follows from the identity
\beqn
\cL_{k}\cdots\cL_{1} - \tilde \cL_{k}\cdots\tilde\cL_{1} = \sum_{j = 1}^k \cL_{k}\cdots\cL_{{j+1}} (\cL_{j}-\tilde\cL_{j})\tilde\cL_{{j-1}}\cdots\tilde\cL_{1} 
\eeqn
and the uniform bounds in~\eqref{eq:LY0} and~\eqref{eq:LY1} that also \eqref{eq:telescope_bound} is satisfied.
\end{proof}

\medskip

\medskip
\begin{lem}\label{lem:memory_loss}
There exists $\vartheta\in(0,1)$ and, given $L>0$, a constant $C>0$ such that
\beqn
\|\cL_k\cdots\cL_1(\psi^1-\psi^2)\|_{L^1} \le C\vartheta^k\ , \quad k\ge 0 
\eeqn
for all $\psi^1,\psi^2\in\cD_L$.
\end{lem}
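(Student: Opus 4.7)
The plan is a coupling argument that uses Lemma~\ref{lem:D_stability} to regularize the densities. Given $\psi^1,\psi^2\in\cD_L$, first apply $N:=N(L)$ iterations to land both pushforwards $\phi^i_N:=\cL_N\cdots\cL_1\psi^i$ in $\cD_{L_*}$. By Remark~\ref{remark:log_Lip}(ii) this gives the uniform pointwise bounds $e^{-L_*}\le\phi^i_N\le e^{L_*}$; write $c:=e^{-L_*}$. The $k<N$ range of the estimate is absorbed into the constant $C$ since trivially $\|\cL_k\cdots\cL_1(\psi^1-\psi^2)\|_{L^1}\le 2$ by the $L^1$-contraction of transfer operators.

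For $k\ge N$, I would track the residual $L^1$-mass $b_k:=\tfrac12\|\phi^1_k-\phi^2_k\|_{L^1}$. Decompose $\phi^i_k=m_k+r^i_k$ where $m_k=\min(\phi^1_k,\phi^2_k)\ge c$ is the common minorant and $r^i_k\ge 0$ are disjoint-support residuals of equal mass $b_k$. Applying $\cL_{k+1}$ preserves the identity $\cL_{k+1}m_k$ as a common summand of both $\phi^i_{k+1}$, so it contributes to $\min(\phi^1_{k+1},\phi^2_{k+1})$ and yields
\[
b_{k+1}\ \le\ b_k-\int\min\bigl(\cL_{k+1}r^1_k,\cL_{k+1}r^2_k\bigr)\,\rd\fm.
\]
The goal is a block contraction: there exist $M\in\bN$ and $\alpha\in(0,1)$, depending only on $L_*,\lambda,A_*$, such that every $M$ further iterations generate a ``matched mass'' of size $\ge \alpha\,b_k$, so $b_{k+M}\le(1-\alpha)b_k$. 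Iterating then gives $b_k\le(1-\alpha)^{\lfloor(k-N)/M\rfloor}$, which is the lemma with $\vartheta=(1-\alpha)^{1/M}$.

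The main obstacle is the block contraction. The residuals $r^i_k=\max(\phi^i_k-\phi^{3-i}_k,0)$ are Lipschitz continuous with uniformly bounded norm (since $\cD_{L_*}$-densities are uniformly Lipschitz, see~\eqref{eq:Lip_bound}) and are bounded above by $e^{L_*}$; their supports have measure at least $b_ke^{-L_*}$. By the Lasota--Yorke bound~\eqref{eq:LY} the Lipschitz norms of the pushforwards $\cL_{k+M}\cdots\cL_{k+1}r^i_k$ are uniformly bounded. The delicate point is that two such pushforwards, which start from disjoint supports of mass~$b_k$, must develop a common minorant of size $\alpha\,b_k$. To make this quantitative and uniform in~$b_k$ and in the maps, I would invoke Lemma~\ref{lem:D_stability}(iii) branch-by-branch: decompose each residual as a convex combination $r^i_k=\sum_\alpha q^i_\alpha\,r^i_{k,I_\alpha}\cdot b_k^{-1}$ over the inverse-branch partition $\{I_\alpha\}$ of $\cT=T_{k+M}\circ\cdots\circ T_{k+1}$ (with $M\ge N(C)$ for the appropriate~$C$), so that each pushforwarded conditional density $\cL_\cT r^i_{k,I_\alpha}/\int_{I_\alpha}r^i_k$ lies in $\cD_{L_*}$ and hence is $\ge c$ pointwise. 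A suitable coupling of the weights $(q^1_\alpha),(q^2_\beta)$ then allows one to extract a common component from $\cL_\cT r^i_k$ of size comparable to $c\,b_k$, provided the branch partition is fine enough; the requisite fineness is guaranteed by choosing $M$ large, using the expansion $\inf\cT'\ge\lambda^M$ and the distortion control from Lemma~\ref{lem:T2T}.
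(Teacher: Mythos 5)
There is a genuine gap at the block-contraction step, and the paper avoids it by a different (and sharper) decomposition.

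Your argument hinges on the claim that there exist $M\in\bN$ and $\alpha\in(0,1)$, \emph{depending only on $L_*,\lambda,A_*$}, such that after $M$ further iterations the disjointly supported residuals $r^1_k,r^2_k$ develop a matched overlap of size $\ge\alpha\,b_k$. This cannot hold with $M$ independent of $b_k$. Although each $\phi^i_k$ stays in $\cD_{L_*}$, the residual $r^i_k=\max(\phi^i_k-\phi^{3-i}_k,0)$ has mass $b_k$ and Lipschitz constant at most $2L_*e^{L_*}$ but \emph{no lower bound on its support away from dependence on $b_k$}: it can be a single bump of height $\sim\sqrt{b_k}$ supported on an interval of length $\sim\sqrt{b_k}$. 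Two such bumps placed on opposite sides of the circle will not have overlapping images until the expansion $\lambda^M$ has magnified their supports to order one, which forces $M\gtrsim \log(1/b_k)/\log\lambda$. So the contraction factor you extract per block degenerates as $b_k\to 0$, and the iteration $b_{k+M}\le(1-\alpha)b_k$ does not close. The branch-by-branch repair you sketch runs into the same obstruction in a different guise: the conditional densities $r^i_{k,I_\alpha}$ generally vanish on a positive-measure subset of $I_\alpha$, so they do not lie in any $\cD_L$ and Lemma~\ref{lem:D_stability}(iii) does not apply to them.

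The paper's proof sidesteps this by subtracting a \emph{constant} minorant $\tfrac12e^{-L_*}$ (rather than the pointwise minimum $\min(\phi^1_k,\phi^2_k)$) and then \emph{renormalizing}: the map $R(\psi)=\frac{\psi-\frac12e^{-L_*}}{1-\frac12e^{-L_*}}$ sends $\cD_{L_*}$ into a fixed class $\cD_{L_1}$. Because the same constant is removed from both densities, the difference $\psi^1_k-\psi^2_k$ transforms exactly linearly under $R$, yielding the clean identity $\|\psi^1_k-\psi^2_k\|_{L^1}=(1-\tfrac12e^{-L_*})\|R(\psi^1_k)-R(\psi^2_k)\|_{L^1}$. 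Since $R(\psi^i_k)\in\cD_{L_1}$ with a \emph{universal} $L_1$, one can iterate with block length $N(L_1)$ and contraction factor $1-\tfrac12e^{-L_*}$, both independent of the current overlap. The crucial structural advantage is self-similarity: after renormalization one is in the same situation as at the start, with a uniformly bounded Lipschitz class, whereas tracking the pointwise minimum loses this scale-invariance as $b_k\to 0$. To fix your proof you would essentially need to import this renormalization idea (or, equivalently, add a uniform constant to $m_k$ and rescale), at which point you recover the paper's argument.
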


\medskip
\begin{proof}
Note that
\beqn
\psi\ge e^{-L_*} \ , \quad \psi\in \cD_{L_*} \ .
\eeqn
Moreover, there exists $L_1>0$ such that
\beqn
R(\psi) \equiv \frac{\psi-\frac12 e^{-L_*}}{1-\frac12 e^{-L_*}} \in \cD_{L_1} \ , \quad \psi \in \cD_{L_*}\ .
\eeqn
We can now construct an inductive coupling scheme for exponential convergence: Suppose~$L>0$ is given and $\psi_i\in\cD_L$, $i=1,2$. Denoting $\psi^i_k = \cL_k\cdots\cL_1\psi^i$, we have $\psi^i_{N(L)} \ge e^{-L_*}$ by Lemma~\ref{lem:D_stability}. Therefore,
\beqn
\|\psi^1_{N(L)}-\psi^2_{N(L)}\|_{L^1} = (1-\tfrac12 e^{-L_*}) \|R(\psi^1_{N(L)})-R(\psi^2_{N(L)})\|_{L^1} \ .
\eeqn
Moreover, $R(\psi^i_{N(L)}) \in \cD_{L_1}$, $i=1,2$. Next, we repeat the procedure, treating $R(\psi^i_{N(L)})$ as the initial densities, obtaining another factor of $1-\tfrac12 e^{-L_*}$ after $N(L_1)$ steps, and so on. This yields
\beqn
\|\psi^1_{k}-\psi^2_{k}\|_{L^1} \le 2(1-\tfrac12 e^{-L_*})^{j+1}
\eeqn
for any $k \ge N(L) + jN(L_1)$ and $j\ge 0$. Since the a priori bound $\|\psi^1_{k}-\psi^2_{k}\|_{L^1}\le 2$ holds for $k<N(L)$, the result follows.
\end{proof}

\medskip
\begin{cor}\label{cor:memory_bounds}
Given $L>0$ and $z\in\bS$, let $g:\bS\to\bR$ be a Lipschitz continuous function on~$J_z$ with constant~$L$, and assume that $\fm(g) = 0$.
\beqn
\|\cL_k\cdots\cL_1 g\|_{L^1} \le C\vartheta^k\ , \quad k\ge 0\ .
\eeqn
Here $\vartheta$ is the same constant as in Lemma~\ref{lem:memory_loss}, and $C$ only depends on $L$.
\end{cor}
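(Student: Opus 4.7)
The plan is to reduce the decorrelation estimate for the mean-zero observable $g$ to the memory-loss bound between two probability densities provided by Lemma~\ref{lem:memory_loss}. The key observation is that, because $g$ is Lipschitz on $J_z$ with constant $L$, its oscillation is at most $L$, so $\fm(g)=0$ forces $\|g\|_\infty\le L$. In particular, $g$ is uniformly bounded by a constant depending only on $L$.

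Set $M=L+1$ and define
\beqn
\psi = \frac{g+M}{M}.
\eeqn
Then $\psi\ge 1/M>0$, $\fm(\psi)=1$, and $\psi$ is Lipschitz on $J_z$ with constant $L/M$, so by Remark~\ref{remark:log_Lip}(iii), $\psi\in\cD_{L}$ (the Lipschitz constant of $\log\psi$ is at most $M\cdot L/M=L$). The constant function $1$ lies trivially in $\cD_L$ as well. Now write
\beqn
g = M(\psi - 1),
\eeqn
so that $\cL_k\cdots\cL_1 g = M\,\cL_k\cdots\cL_1(\psi-1)$. Applying Lemma~\ref{lem:memory_loss} with the two densities $\psi^1=\psi$ and $\psi^2=1$ (both in $\cD_L$) yields
\beqn
\|\cL_k\cdots\cL_1 g\|_{L^1} = M\,\|\cL_k\cdots\cL_1(\psi-1)\|_{L^1} \le M\,C(L)\,\vartheta^k,
\eeqn
which is the desired bound with a constant $C$ depending only on $L$ (and with the same $\vartheta$ as in Lemma~\ref{lem:memory_loss}).

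There is essentially no obstacle here: the only point to verify carefully is that the shifted and normalized function $\psi$ indeed lies in the class $\cD_L$ for a constant depending only on $L$, which follows immediately from the a priori bound $\|g\|_\infty\le L$ and the observation that the possible jump of $\psi$ is located at the same point $z$ as that of $g$. Everything else is an appeal to Lemma~\ref{lem:memory_loss}.
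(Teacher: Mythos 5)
Your proof is correct and essentially identical to the paper's: you both shift and normalize $g$ to the probability density $\psi = (g+L+1)/(L+1)$, observe $\psi\in\cD_L$ via Remark~\ref{remark:log_Lip}(iii), and invoke Lemma~\ref{lem:memory_loss} with $\psi^1=\psi$, $\psi^2=1$. The only cosmetic difference is that you derive $\|g\|_\infty\le L$ whereas the paper uses just the one-sided bound $g\ge -L$ (sufficient for positivity of $\psi$); this does not change the argument.
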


\medskip
\begin{proof}
Since $\fm(g) = 0$, we have $g \ge -L$. Then the probability density
\beqn
\psi = \frac{g}{1+L} + 1
\eeqn
is Lipschitz continuous on $J_z$ with constant $\frac{L}{1+L}$, and $\psi\ge \frac{1}{1+L}$. By Remark~\ref{remark:log_Lip},~$\psi\in\cD_L$. By Lemma~\ref{lem:memory_loss},
\beqn
\|\cL_k\cdots\cL_1(\psi-1)\|_{L^1} \le C \vartheta^k\ ,
\eeqn
where $C$ is determined by the value of $L$. We get
\beqn
\|\cL_k\cdots\cL_1 g\|_{L^1} \le C (1+L)\vartheta^k 
\eeqn
as claimed.
\end{proof}

Denoting $\hat f_t = f - \fm(\hat\rho_t f)$, Corollary~\ref{cor:memory_bounds}, above, implies that
\beq\label{eq:rho_f_decay}
\|\cL_t^{k}(\hat\rho_{t} \hat f_t)\|_{L^1}\le C\vartheta^k
\eeq
holds uniformly in $t$ and $k$, with the constant depending on $\|f\|_{\mathrm{Lip}}$ only.

\medskip
Corollary~\ref{cor:memory_bounds} is also instrumental for the following lemma.

\medskip
\begin{lem}\label{lem:SRB_parameter_regularity}
    For any $\beta >0$, there exists such a constant $C>0$ that for any $T_1, T_2 \in \cM$,
\beq\label{eq:SRB_T_regularity}
\|\hat\rho_{1} - \hat\rho_2\|_{L^1} \le Cd_{C^1}(T_1, T_2)^{1-\beta}\ ,
\eeq
where $\hat\rho_j$ is the SRB density for $T_j$. 

\smallskip
For any $\eta'<\eta$, there exists such a constant $C>0$ that
\beq\label{eq:SRB_parameter_regularity}
\|\hat\rho_t - \hat\rho_s\|_{L^1} \le C|t-s|^{\eta'}\ .
\eeq
\end{lem}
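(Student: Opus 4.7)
The plan is the classical memory-loss versus perturbation trade-off. For any $k \ge 1$, invariance $\hat\rho_j = \cL_j^k \hat\rho_j$ gives the identity
\beqn
\hat\rho_1 - \hat\rho_2 = (\cL_1^k - \cL_2^k)\hat\rho_1 + \cL_2^k(\hat\rho_1 - \hat\rho_2),
\eeqn
and I will estimate the two terms by telescoping and by coupling, respectively.

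First I need uniform membership $\hat\rho_T \in \cD_{L_*}$ for all $T\in\cM$. Since $1\in\cD_L$ for every $L\ge 0$, Lemma~\ref{lem:D_stability}(ii) yields $\cL_T^k 1 \in \cD_{L_*}$ for $k\ge N(L_*)$. The remark preceding \eqref{eq:Lip_bound} states that $\cL_T^k 1 \to \hat\rho_T$ uniformly, so $\hat\rho_T \ge e^{-L_*}$ and $\hat\rho_T$ is Lipschitz uniformly in $T$ by \eqref{eq:Lip_bound}. Remark~\ref{remark:log_Lip}(iii) then gives $\hat\rho_T \in \cD_{L_\#}$ with some $L_\#$ independent of $T$; replacing $L_*$ by $\max(L_*,L_\#)$ if necessary, we may assume $\hat\rho_T\in\cD_{L_*}$ uniformly.

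With this in hand, the telescoping bound \eqref{eq:telescope_bound} applied to $T_j=T_1$, $\tilde T_j=T_2$ for $1\le j\le k$ and the uniform Lipschitz norm of $\hat\rho_1$ yields
\beqn
\|(\cL_1^k - \cL_2^k)\hat\rho_1\|_{L^1} \le \|(\cL_1^k - \cL_2^k)\hat\rho_1\|_{C^0} \le C k\, d_{C^1}(T_1,T_2),
\eeqn
while Lemma~\ref{lem:memory_loss} applied to $\hat\rho_1,\hat\rho_2\in\cD_{L_*}$ gives
\beqn
\|\cL_2^k(\hat\rho_1 - \hat\rho_2)\|_{L^1} \le C\vartheta^k.
\eeqn
Writing $\delta = d_{C^1}(T_1,T_2)$, the triangle inequality produces $\|\hat\rho_1-\hat\rho_2\|_{L^1}\le C(k\delta + \vartheta^k)$. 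Choosing $k = \lceil\log(1/\delta)/\log(1/\vartheta)\rceil$ makes $\vartheta^k\lesssim\delta$ and $k\delta\lesssim\delta\log(1/\delta)$, hence
\beqn
\|\hat\rho_1-\hat\rho_2\|_{L^1} \le C\,\delta\,\log(1/\delta)\le C_\beta\,\delta^{1-\beta}
\eeqn
for any $\beta>0$, which is \eqref{eq:SRB_T_regularity}. (If $\delta$ is not small, the bound is trivial from $\|\hat\rho_j\|_\infty\le e^{L_*}$.)

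For \eqref{eq:SRB_parameter_regularity}, the H\"older hypothesis on $\gamma$ gives $d_{C^1}(\gamma_t,\gamma_s) \le C|t-s|^\eta$, so \eqref{eq:SRB_T_regularity} yields $\|\hat\rho_t-\hat\rho_s\|_{L^1} \le C|t-s|^{\eta(1-\beta)}$, and choosing $\beta = 1-\eta'/\eta$ concludes. The only delicate point is the uniform $\cD_{L_*}$ estimate for $\hat\rho_T$, since Lemmas~\ref{lem:memory_loss} and~\ref{lem:D_stability} have been tailored precisely to supply it cleanly; the rest is a routine optimization.
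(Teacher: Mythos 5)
Your proof is correct and uses essentially the same strategy as the paper: split $\hat\rho_1-\hat\rho_2$ via invariance into a telescoping term bounded by~\eqref{eq:telescope_bound} and a memory-loss term, then optimize over~$k$. The only cosmetic difference is that the paper invokes Corollary~\ref{cor:memory_bounds} (which needs only Lipschitz continuity and zero mean, both supplied by~\eqref{eq:Lip_bound}) rather than Lemma~\ref{lem:memory_loss}, so it never needs to verify $\hat\rho_T\in\cD_{L_*}$; your detour to establish uniform $\cD_{L_*}$-membership is sound but avoidable.
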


\medskip
\begin{proof}
We could appeal to the perturbation theory developed in \cite{KellerLiverani_1999} (see especially Corollary~1 and Remark~5 there). However, since we need control of $\hat\rho_1-\hat\rho_2$ in $L^1$ only, we provide an independent argument. Note that $\hat\rho_T = \cL_T^k\hat\rho_T$ for all~$T\in \cM$ and~$k\ge 1$. Since the $C^0$ norm dominates the $L^1$ norm, 
the bounds in \eqref{eq:telescope_bound} and~\eqref{eq:Lip_bound} together with Corollary~\ref{cor:memory_bounds} show that
\beqn
\begin{split}
    \|\hat\rho_1 - \hat\rho_2\|_{L^1} \le  \|\cL_1^k(\hat\rho_1-\hat\rho_2)\|_{L^1} + \|(\cL_1^k-\cL_2^k)\hat\rho_2\|_{L^1} \le C(\vartheta^k + k d_{C^1}(T_1,T_2))
\end{split}
\eeqn
holds uniformly for all $T_1,T_2$ and all $k$. 
Choosing $k = \lceil \log d_{C^1}(T_1,T_2) /\log\vartheta \rceil$ on the right side yields~\eqref{eq:SRB_T_regularity}, from which~\eqref{eq:SRB_parameter_regularity} follows.
\end{proof}

\medskip
\begin{lem}\label{lem:pushforward_vs_SRB}
There exists~$b>0$ such that the following holds. Given~$\eta'<\eta$ and a probability density~$\rho$ that is Lipschitz continuous on~$J_z$ with constant~$L>0$ for some~$z\in\bS$,
\beqn
\|\rho_{n,k} - \hat\rho_{n,k}\|_{L^1} \le Cn^{-\eta'}\ , \quad  k\ge b\log n\ .
\eeqn
The constant~$C>0$ is determined by $\eta'$ and $L$.
\end{lem}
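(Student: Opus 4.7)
The plan is a telescoping argument combining exponential memory loss (Lemma~\ref{lem:memory_loss}) with the slow variation of the maps $T_{n,j}$ in $j$ at level $n$, preceded by a reduction to initial densities lying in $\cup_L\cD_L$. For the reduction, if $\rho$ is a probability density that is Lipschitz on $J_z$ with constant $L$, then $\rho' = \tfrac12(\rho+1)$ is a probability density satisfying $\rho'\ge\tfrac12$ and is Lipschitz on $J_z$ with constant $L/2$, so Remark~\ref{remark:log_Lip}(iii) places it in $\cD_L$. Writing $\rho = 2\rho' - 1$ and using linearity of the pushforward,
\begin{equation*}
\rho_{n,k} - \hat\rho_{n,k} = 2\bigl(\cL_{n,k}\cdots\cL_{n,1}\rho' - \hat\rho_{n,k}\bigr) - \bigl(\cL_{n,k}\cdots\cL_{n,1}1 - \hat\rho_{n,k}\bigr),
\end{equation*}
and since $1\in\cD_{L_*}$ as well, it suffices to prove the stated bound with the initial density replaced by an arbitrary $\psi\in\cD_L$ (with $L\ge L_*$, absorbing via $\cD_L\subset\cD_{L'}$ for $L\le L'$).

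Now fix $\psi\in\cD_L$ and set $m = \lceil b_0\log n\rceil$ for a $b_0$ to be chosen. For $k\ge b\log n$ with $b$ marginally larger than $b_0$, Lemma~\ref{lem:D_stability}(ii) places $\psi_{n,k-m}:=\cL_{n,k-m}\cdots\cL_{n,1}\psi$ in $\cD_{L_*}$ for all large $n$; similarly $\hat\rho_{n,k}\in\cD_{L_*}$ since it is the uniform limit of $\cL_{n,k}^j 1$ with $1\in\cD_{L_*}$ stable under $\cL_{n,k}$. Exploiting that $\cL_{n,k}^m\hat\rho_{n,k} = \hat\rho_{n,k}$, we decompose
\begin{equation*}
\psi_{n,k} - \hat\rho_{n,k} = \cL_{n,k}\cdots\cL_{n,k-m+1}\bigl(\psi_{n,k-m} - \hat\rho_{n,k}\bigr) + \bigl(\cL_{n,k}\cdots\cL_{n,k-m+1} - \cL_{n,k}^m\bigr)\hat\rho_{n,k}.
\end{equation*}
The first term is bounded in $L^1$ by $C\vartheta^m$ via Lemma~\ref{lem:memory_loss}. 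For the second, the telescope estimate \eqref{eq:telescope_bound} together with the uniform Lipschitz bound \eqref{eq:Lip_bound} on $\hat\rho_{n,k}$ yields a $C^0$-bound (hence $L^1$-bound on $\bS$) of order $\sum_{j=k-m+1}^k d_{C^1}(T_{n,j},T_{n,k})$. Using the triangle inequality, \eqref{eq:rate}, and Hölder continuity of $\gamma$ with exponent~$\eta$, each summand is at most $C(n^{-\eta}+((k-j)/n)^\eta)$, so the whole sum is dominated by $Cm^{1+\eta}n^{-\eta}$.

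To finish, choose $b_0 > \eta'/|\log\vartheta|$ so that $\vartheta^m = O(n^{-\eta'})$; the telescope contribution is then bounded by $C(\log n)^{1+\eta}n^{-\eta} = o(n^{-\eta'})$ for every $\eta'<\eta$. Combining the two estimates gives $\|\psi_{n,k}-\hat\rho_{n,k}\|_{L^1}\le Cn^{-\eta'}$, and the trivial $L^1$-bound of $2$ between two probability densities handles the finitely many small $n$ for which $k-m\ge N(L)$ fails. The main subtlety is that \eqref{eq:telescope_bound} demands a Lipschitz input on all of $\bS$, whereas the iterate $\psi_{n,k-m}$ may carry a jump at the forward image of the original singular point $z$. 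The decomposition is engineered to sidestep this: the telescope error is applied to $\hat\rho_{n,k}$, whose \emph{global} Lipschitz regularity is guaranteed by \eqref{eq:Lip_bound}, while $\psi_{n,k-m}$ enters only through the memory-loss step, where the $\cD_L$-type control (allowing a single jump) is precisely what is needed.
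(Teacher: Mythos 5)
Your proof is correct and follows essentially the same route as the paper: the same decomposition $\rho_{n,k}-\hat\rho_{n,k}=\cL_{n,k}\cdots\cL_{n,k-m+1}(\rho_{n,k-m}-\hat\rho_{n,k})+(\cL_{n,k}\cdots\cL_{n,k-m+1}-\cL_{n,k}^m)\hat\rho_{n,k}$, with the telescope bound of order $m^{1+\eta}n^{-\eta}$ for the second term, exponential memory loss of order $\vartheta^m$ for the first, and $m\asymp\log n$ chosen to balance. The only cosmetic difference is that you first reduce to $\psi\in\cD_L$ via $\rho=2\rho'-1$ and then invoke Lemma~\ref{lem:memory_loss} directly, whereas the paper skips that reduction and applies Lemma~\ref{lem:Lip_stability} together with Corollary~\ref{cor:memory_bounds} (itself a consequence of Lemma~\ref{lem:memory_loss}) to $\rho_{n,k-K}-\hat\rho_{n,k}$ directly.
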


\medskip
\begin{proof}
Since $\cL_{n,k}\hat\rho_{n,k} = \hat\rho_{n,k}$, we have
\beqn
\begin{split}
\rho_{n,k} - \hat\rho_{n,k} & = 
\cL_{n,k}\cdots \cL_{n,k-K+1}(\rho_{n,k-K}-\hat\rho_{n,k})
+ (\cL_{n,k}\cdots \cL_{n,k-K+1}-\cL_{n,k}^{K})\hat\rho_{n,k} 
\end{split}
\eeqn
for any $K<k$.
In order to bound $\rho_{n,k} - \hat\rho_{n,k}$ in $L^1$, note that Lemma~\ref{lem:Lip_stability} and 
Corollary~\ref{cor:memory_bounds} imply
\beqn
\| \cL_{n,k}\cdots \cL_{n,k-K+1}(\rho_{n,k-K}-\hat\rho_{n,k})  \|_{L^1} \le C\vartheta^K \ ,
\eeqn
where $C$ is determined by $L$.
Because $\|\hat\rho_{n,k}\|_{\mathrm{Lip}}$ is uniformly bounded (see~\eqref{eq:Lip_bound}) and because $\|\cdot\|_{L^1}\le\|\cdot\|_{C^0}$,~\eqref{eq:telescope_bound} yields
\beqn
\begin{split}
& \|(\cL_{n,k}\cdots \cL_{n,k-K+1}-\cL_{n,k}^{K})\hat\rho_{n,k}\|_{L^1}
    \le C K \max_{k-K+1\le j\le k} d_{C^1}(T_{n,j}, T_{n,k})  \le CK^{1+\eta}n^{-\eta}\ .
\end{split}
\eeqn
Collecting,
\beqn
\|\rho_{n,k} - \hat\rho_{n,k}\|_{L^1} \le C(\vartheta^K + K^{1+\eta}n^{-\eta})
\eeqn
for all $n$, $K$ and $k> K$. Setting $K = \lceil -\eta\log n/{\log\vartheta} \rceil$, we see that all terms on the right side above are bounded by $Cn^{-\eta'}$, where $C$ is determined by $\eta'$ and $L$.
\end{proof}

\medskip
\begin{lem}\label{lem:combineddensities}
There exists~$b>0$ such that the following holds. Given~$\eta'<\eta$ and a probability density~$\rho$ that is Lipschitz continuous on~$J_z$ with constant~$L>0$ for some~$z\in\bS$,
\beq \label{eq:sdensity}
\|\rho_{n,\round{nt}} - \hat\rho_t\|_{L^1} \le Cn^{-\eta'} 
\eeq
and more generally,
\beq \label{eq:rsdensity}
\|\rho_{n,\round{nt}} - \hat\rho_s\|_{L^1} \le C(n^{-\eta'} + |t-s|^{\eta'})\ , 
\eeq
provided $t \geq b n^{-1} \log n$. 
The constant~$C>0$ is determined by $\eta'$ and $L$.
\end{lem}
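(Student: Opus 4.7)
The plan is to derive this lemma directly by combining the two preceding results: Lemma~\ref{lem:pushforward_vs_SRB}, which compares the pushforward density $\rho_{n,k}$ to the SRB density $\hat\rho_{n,k}$ of the current map $T_{n,k}$, and Lemma~\ref{lem:SRB_parameter_regularity}, which says that the SRB density depends H\"older-continuously (in a $d_{C^1}$-sense) on the underlying map in $\cM$. The only additional ingredient is the convergence rate~\eqref{eq:rate}, which bounds how far $T_{n,\round{nt}}$ is from~$\gamma_t$.

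First I would write the triangle inequality
\beqn
\|\rho_{n,\round{nt}} - \hat\rho_t\|_{L^1} \le \|\rho_{n,\round{nt}} - \hat\rho_{n,\round{nt}}\|_{L^1} + \|\hat\rho_{n,\round{nt}} - \hat\rho_t\|_{L^1}.
\eeqn
Taking the same $b$ as in Lemma~\ref{lem:pushforward_vs_SRB} (possibly inflating it slightly to absorb the rounding, since $t\ge bn^{-1}\log n$ yields $\round{nt}\ge b\log n$ for $n$ large), the first term is bounded by $Cn^{-\eta'}$ with $C$ depending only on $\eta'$ and $L$. For the second term, \eqref{eq:rate} gives $d_{C^1}(T_{n,\round{nt}},\gamma_t) \le C n^{-\eta}$, so Lemma~\ref{lem:SRB_parameter_regularity}\eqref{eq:SRB_T_regularity} applied with some $\beta>0$ yields
\beqn
\|\hat\rho_{n,\round{nt}} - \hat\rho_t\|_{L^1} \le C n^{-\eta(1-\beta)}.
\eeqn
Given the target exponent $\eta'<\eta$, fix $\beta$ so small that $\eta(1-\beta)\ge \eta'$; this bounds the second term by $Cn^{-\eta'}$ as well, proving~\eqref{eq:sdensity}.

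The bound~\eqref{eq:rsdensity} then follows at once from~\eqref{eq:sdensity} and Lemma~\ref{lem:SRB_parameter_regularity}\eqref{eq:SRB_parameter_regularity} by another triangle inequality:
\beqn
\|\rho_{n,\round{nt}} - \hat\rho_s\|_{L^1} \le \|\rho_{n,\round{nt}} - \hat\rho_t\|_{L^1} + \|\hat\rho_t - \hat\rho_s\|_{L^1} \le C\bigl(n^{-\eta'} + |t-s|^{\eta'}\bigr),
\eeqn
with the constant depending only on $\eta'$ and $L$ (through the former term). There is essentially no obstacle here beyond bookkeeping: all the work has already been done in Lemmas~\ref{lem:pushforward_vs_SRB} and~\ref{lem:SRB_parameter_regularity}. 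The only subtle point is the passage from the exponent $\eta$ (appearing in \eqref{eq:rate} and in Lemma~\ref{lem:SRB_parameter_regularity}\eqref{eq:SRB_parameter_regularity}) to the slightly smaller $\eta'$, which is forced on us because the $d_{C^1}$-H\"older exponent from~\eqref{eq:SRB_T_regularity} is $1-\beta$ rather than $1$ and because Lemma~\ref{lem:pushforward_vs_SRB} itself is already stated with the loss $\eta\to\eta'$.
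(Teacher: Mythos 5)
Your proof is correct and follows essentially the same route as the paper: a triangle inequality through $\hat\rho_{n,\round{nt}}$ and $\hat\rho_t$, with the three pieces handled by Lemma~\ref{lem:pushforward_vs_SRB}, \eqref{eq:SRB_T_regularity} combined with the convergence rate~\eqref{eq:rate}, and \eqref{eq:SRB_parameter_regularity}, respectively. The paper's proof is terser but cites the same two lemmas and~\eqref{eq:rate}; your extra care about absorbing the rounding in $\round{nt}$ and about choosing $\beta$ so that $\eta(1-\beta)\ge\eta'$ is exactly the bookkeeping the paper leaves implicit.
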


\medskip
\begin{proof}
    Recalling~\eqref{eq:rate}, Lemmas~\ref{lem:SRB_parameter_regularity} and~\ref{lem:pushforward_vs_SRB} yield
    \beqn
        \| \rho_{n,\round{nt}} - \hat \rho_{n,\round{nt}} + \hat \rho_{n,\round{nt}} - \hat \rho_t + \hat \rho_t - \hat \rho_s \|_{L^1} \leq C( n^{-\eta'} + |t-s|^{\eta'})\ ,
        \eeqn
        as required. 
\end{proof}

\medskip
Recall that $f_{n,k} = f\circ T_{n,k}\circ\dots\circ T_{n,1}$; see~\eqref{eq:f}. Corollary~\ref{cor:memory_bounds} is key also in the proof of the following decorrelation result; see~\cite{StenlundSulku_2014} for a generalization.

\medskip
\begin{lem}\label{lem:dec}
There exists a constant $\vartheta\in(0,1)$ such that the following holds. Given a probability measure~$\mu$ with a density that is Lipschitz continuous on~$J_z$ with constant~$L>0$ for some~$z\in\bS$; $k\ge 2$; Lipschitz continuous functions~$f^{(1)},\dots,f^{(k)}:\bS\to\bR$; and numbers $0\le t_1<\dots<t_k\le 1$; we have
\beqn
|\mu(f^{(1)}_{n,\round{nt_1}}\cdots f^{(k)}_{n,\round{nt_k}}) - \mu(f^{(1)}_{n,\round{nt_1}}\cdots f^{(m)}_{n,\round{nt_m}}) \, \mu(f^{(m+1)}_{n,\round{nt_{m+1}}}\cdots f^{(k)}_{n,\round{nt_k}})| \le C\vartheta^{n(t_{m+1}-t_m)}
\eeqn
for all $m\in\{1,\dots,k-1\}$ and $n\ge 0$. The constant $C>0$ is determined by~$k$,~$\|f\|_{\mathrm{Lip}}$ and~$L$.
\end{lem}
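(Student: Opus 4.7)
The plan is to express $\mu(GH)-\mu(G)\mu(H)$ as an $L^1$ pairing with a mean-zero function that has uniformly controlled Lipschitz regularity, and then to bound that pairing using Corollary~\ref{cor:memory_bounds}. Write $N_j = \round{nt_j}$, $\cU_{j,\ell} = T_{n,N_\ell}\circ\cdots\circ T_{n,N_j+1}$ (so $\cT_{N_\ell} = \cU_{j,\ell}\circ\cT_{N_j}$), $G = f^{(1)}_{n,N_1}\cdots f^{(m)}_{n,N_m}$, and $H = f^{(m+1)}_{n,N_{m+1}}\cdots f^{(k)}_{n,N_k}$. Since $H(x)=\tilde H(\cT_{N_m}x)$ with $\tilde H(y)=\prod_{j=m+1}^{k} f^{(j)}(\cU_{m,j} y)$, duality yields
\beqn
\mu(GH) - \mu(G)\mu(H) = \int \tilde H\cdot\eta\,d\fm\ ,\quad \eta := \cL_{\cT_{N_m}}(G\rho) - \mu(G)\rho_{n,N_m}\ ,
\eeqn
with $\int\eta\,d\fm=0$. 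Writing further $\tilde H=\tilde H_1\circ\cU_{m,m+1}$, where $\|\tilde H_1\|_\infty\le\prod_j\|f^{(j)}\|_\infty$, the pairing becomes $\int \tilde H_1\cdot\cL_{\cU_{m,m+1}}\eta\,d\fm$, so the task reduces to the $L^1$ bound $\|\cL_{\cU_{m,m+1}}\eta\|_{L^1}\le C\vartheta^{N_{m+1}-N_m}$.

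The crux is to show that $\eta$ is Lipschitz on some $J_{z^*}$ with a uniform-in-$n$ constant, even though $G$ itself has Lipschitz norm of order $\Lambda^{N_m}$, where $\Lambda=\sup_{T\in\cM}\|T'\|_\infty$. I would avoid differentiating the composition directly by iterating the pull-through identity $\cL_T(g\circ T\cdot u)=g\cdot\cL_T u$. Set $\psi_1 := \rho_{n,N_1}$ and $\psi_j := \cL_{\cU_{j-1,j}}(f^{(j-1)}\psi_{j-1})$ for $2\le j\le m$. A straightforward induction using $\cT_{N_j}=\cU_{j-1,j}\circ\cT_{N_{j-1}}$ gives
\beqn
\cL_{\cT_{N_j}}\Bigl(\prod_{i=1}^j f^{(i)}\circ\cT_{N_i}\cdot\rho\Bigr) = f^{(j)}\psi_j\ ,
\eeqn
and in particular $\cL_{\cT_{N_m}}(G\rho)=f^{(m)}\psi_m$. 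A second induction, using Lemma~\ref{lem:Lip_stability} (or its sharper form~\eqref{eq:Liphz}) at each application of $\cL_{\cU_{j-1,j}}$, shows that $\psi_j$ is Lipschitz on $J_{\cT_{N_j}z}$; the uniform $L^1$ bound $\|\psi_j\|_{L^1}\le\prod_{i<j}\|f^{(i)}\|_\infty$ controls $\|\psi_j\|_\infty$ in terms of the local Lipschitz constant, and the global Lipschitz regularity of each $f^{(j-1)}$ lets the induction pass from $\psi_{j-1}$ to $\psi_j$ with a multiplicative constant of order $C(1+\max_i\|f^{(i)}\|_{\mathrm{Lip}})$ per step. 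Since $m\le k$ is fixed, the final constant is uniform in $n$.

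Consequently $\eta$ is Lipschitz on $J_{\cT_{N_m}z}$ with a constant depending only on $L$, $k$, and $\max_j\|f^{(j)}\|_{\mathrm{Lip}}$, and has zero mean. Corollary~\ref{cor:memory_bounds} applied to $\eta$ against the $(N_{m+1}-N_m)$-fold composition $\cL_{\cU_{m,m+1}}$ then delivers $\|\cL_{\cU_{m,m+1}}\eta\|_{L^1}\le C\vartheta^{N_{m+1}-N_m}$; since $N_{m+1}-N_m\ge n(t_{m+1}-t_m)-1$, a factor $\vartheta^{-1}$ is absorbed into $C$, yielding the claim. The main obstacle is precisely the uniform-in-$n$ regularity of $\psi_m$: a naive attempt to bound $\|\cL_{\cT_{N_m}}(G\rho)\|_{\mathrm{Lip}}$ via~\eqref{eq:LY} would produce a useless factor of order $(\Lambda/\lambda)^{N_m}$, whereas the pull-through decomposition effectively distributes the multiplication by $G$ as $m$ bounded operations interleaved with transfer-operator iterations, each of which preserves the Lipschitz-on-$J_z$ class by Lemma~\ref{lem:Lip_stability}.
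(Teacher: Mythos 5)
Your proof is correct and reaches the same conclusion by the same overall strategy the paper uses — duality to pass to an $L^1$ pairing, uniform Lipschitz regularity of a transferred density, then Corollary~\ref{cor:memory_bounds} — but you obtain the key Lipschitz bound by a genuinely different mechanism. The paper's proof estimates $\mathrm{Lip}(\cL_{\cT_{N_m}}(\rho F))$ in one shot: it writes $\tilde F(x)-\tilde F(y)$ as a sum over preimages $x_{i,-n},y_{i,-n}$ and uses the fact that, because each $f^{(j)}_{n,N_j}(x_{i,-n})=f^{(j)}(T_{n,N_j}\circ\cdots\circ T_{n,1}(x_{i,-n}))$ involves a contracting chain from the preimage up to time $N_j\le N_m$, every factor in $\rho F$ is uniformly Lipschitz when evaluated along corresponding preimage pairs. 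Your version isolates this mechanism into an explicit pull-through decomposition $\cL_{\cT_{N_m}}(G\rho)=f^{(m)}\psi_m$ with $\psi_j=\cL_{\cU_{j-1,j}}(f^{(j-1)}\psi_{j-1})$, so that the regularity is propagated inductively via~\eqref{eq:Liphz} one observable at a time, rather than packed into one global product estimate. What this buys you is transparency: the algebraic identity makes it clear that each multiplication by $f^{(j)}$ degrades regularity by a bounded factor and each application of a transfer block restores it, so the constant depends only on $k$ and $\|f\|_{\mathrm{Lip}}$. The cost is a bit more bookkeeping (the $L^1\to L^\infty$ bootstrap for the signed functions $\psi_j$, and the need to use~\eqref{eq:Liphz} rather than Lemma~\ref{lem:Lip_stability} since $\psi_j$ is not a probability density for $j\ge 2$ — you correctly flag this). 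One small wrinkle worth making explicit in a final write-up: after showing $\cL_{\cT_{N_m}}(G\rho)=f^{(m)}\psi_m$ is Lipschitz, you still subtract $\mu(G)\rho_{n,N_m}$; the latter is Lipschitz with uniform constant by Lemma~\ref{lem:Lip_stability}, so $\eta$ inherits a uniform Lipschitz bound, but this deserves a sentence.
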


\medskip
\begin{proof} Let $\rho$ be the density of $\mu$. Let~$z\in\bS$ be such that~$\rho$ is Lipschitz on~$J_z$ with constant~$L$. For the sake of brevity, let us denote
\beqn
F = f^{(1)}_{n,\round{nt_1}}\cdots f^{(m)}_{n,\round{nt_m}} \ , \quad \tilde F = \cL_{n,\round{nt_m}}\cdots\cL_{n,1} (\rho F)
\eeqn
and
\beqn
G = \prod_{j = {m+1}}^k f^{(j)}\circ T_{n,\round{nt_j}}\circ\cdots\circ T_{n,\round{nt_{m+1}}+1}\ .
\eeqn

\medskip
\noindent{\it Claim.}
The Lipschitz constant of $\tilde F-\fm(\tilde F)$ on $J_z$ and the function $G$ are uniformly bounded over all $n$ and $t_j$, $1\le j\le k$. 

\medskip
\noindent Assuming the Claim, it holds true that
\beqn
\begin{split}
\mu(f^{(1)}_{n,\round{nt_1}}\cdots f^{(k)}_{n,\round{nt_k}}) 
& = \fm(\rho F\cdot G\circ T_{n,\round{nt_{m+1}}}\circ\cdots\circ T_{n,1})
\\
& = \fm(\tilde F\cdot G\circ T_{n,\round{nt_{m+1}}}\circ\cdots\circ T_{n,\round{nt_m}+1})
\\
& = \fm(\tilde F)\fm(G\circ T_{n,\round{nt_{m+1}}}\circ\cdots\circ T_{n,\round{nt_m}+1}) + O(\vartheta^{n(t_{m+1}-t_m)})
\\
& = \mu(F)\fm(G\circ T_{n,\round{nt_{m+1}}}\circ\cdots\circ T_{n,\round{nt_m}+1}) + O(\vartheta^{n(t_{m+1}-t_m)})
\\
& = \mu(F)\mu_{n,\round{nt_m}}(G\circ T_{n,\round{nt_{m+1}}}\circ\cdots\circ T_{n,\round{nt_m}+1}) + O(\vartheta^{n(t_{m+1}-t_m)})
\\
& = \mu(F)\mu(G\circ T_{n,\round{nt_{m+1}}}\circ\cdots\circ T_{n,1}) + O(\vartheta^{n(t_{m+1}-t_m)})\ .
\end{split}
\eeqn
The first, fourth and sixth lines use definitions only, and the second line uses duality. The third and fifth lines follow from Corollary~\ref{cor:memory_bounds}; the claim that $G$ is bounded is needed for both, while the uniform Lipschitz bound on $\tilde F-\fm(\tilde F)$ is crucial for the third line. In the fifth line we also used the fact that the density of $\mu_{n,\round{nt_m}}$ has a uniform bound on its Lipschitz constant; see Lemma~\ref{lem:Lip_stability}.

\medskip 
\noindent{\it Proof of Claim.} The boundedness of $G$ is obvious. Write $\cT = T_{n,\round{nt_m}}\circ\cdots\circ T_{n,1}$. 
Since $\rho$ is a probability density, its Lipschitz property implies the upper bound $\rho\le 1+L$. Thus,
\beqn
\|\rho F\|_{\infty} \le (1+L)\prod_{j=1}^m\|f^{(j)}\|_\infty\ .
\eeqn
Given $x,y\in J_z$, we write $x_{i,-n},y_{i,-n}$ for the corresponding preimages under~$\cT$. (In other words, for each $i$ there is an arc $J_{i,-n}$ containing $x_{i,-n}$ and $y_{i,-n}$ such that $\cT(J_{i,-n}) = J_z$.) The trivial bounds $|\rho(x_{i,-n}) - \rho(y_{i,-n})| \le L|x-y|_z$ and 
$
|f^{(j)}_{n,\round{nt_j}}(x_{i,-n}) - f^{(j)}_{n,\round{nt_j}}(x_{i,-n})| \le \mathrm{Lip}(f^{(j)}) |x-y|_z
$
yield
\beqn
|(\rho F)(x_{i,-n}) - (\rho F)(y_{i,-n})| \le |x-y|_z \, (1+2L)\prod_{j=1}^m\|f^{(j)}\|_\mathrm{Lip} \ .
\eeqn
As in the proof of~\eqref{eq:LY},
\beqn
\begin{split}
| \tilde F(x) - \tilde F(y) | 
& \le \sum_{i} \left| \frac{(\rho F)(x_{i,-n})}{\cT'(x_{i,-n})} - \frac{(\rho F)(y_{i,-n})}{\cT'(y_{i,-n})} \right|
\\
& \le \sum_{i} \left| \frac{(\rho F)(x_{i,-n}) - (\rho F)(y_{i,-n})}{\cT'(x_{i,-n})} + \frac{(\rho F)(y_{i,-n})}{\cT'(y_{i,-n})}\left[\frac{\cT'(y_{i,-n})}{\cT'(x_{i,-n})} - 1\right] \right|
\\
& \le C|x-y|_z \, (1+2L)\prod_{j=1}^m\|f^{(j)}\|_\mathrm{Lip}\ ,
\end{split}
\eeqn
which implies the Claim. The proof of Lemma~\ref{lem:dec} is complete.
\end{proof}



\medskip
\section{Preliminaries II: processes $\zeta_n$, $\xi_n$ and $\chi_n$}\label{sec:preliminariesII}

In this section we study the stochastic processes of interest, assuming  the initial measure~$\mu$ has density~$\rho\in\cup_{L>0}\cD_L$. Without mentioning it separately each time, the observable~$f:\bS\to\bR$ is assumed to be Lipschitz continuous in the rest of the paper. For convenience, let us now recall the notations $f_{n,k} = f\circ T_{n,k}\circ\dots\circ T_{n,1}$ and $\bar f_{n,k} = f_{n,k} - \mu(f_{n,k})$ introduced in~\eqref{eq:f} and~\eqref{eq:bar_f}, respectively.

\medskip
\noindent{\bf Convention.}
Recall the definition of $\chi_n$ in~\eqref{eq:chi}. Given the initial measure~$\mu$, we will often consider the special centering $c_n(t) = \mu(\zeta_n(\slot,t))$ which yields $\mu(\chi_n(\slot,t))=0$ for all~$t$. In this case, to emphasize that the initial measure has been chosen and appears in the definition of the process explicitly, {\bf we use the symbol $\xi_n$ instead of $\chi_n$}, i.e.,
we define the functions $\xi_n:\bS\times [0,1]\to\bR:$
\beq\label{eq:xi}
\xi_n(x,t) = n^\frac12\zeta_n(x,t) - n^\frac12\mu(\zeta_n(\slot,t))\ .
\eeq
In other words, given another centering sequence~$(c_n)_{n\ge 1}$, we have the relation 
\beqn
\chi_n(x,t) = \xi_n(x,t) + n^\frac12 \int_0^{t} \mu(f_{n,\round{ns}}) \,\rd s - n^\frac12 c_n(x,t)\ .
\eeqn

\medskip
In practice, it will be convenient to express the processes~$\zeta_n$ and~$\xi_n$ using integral notation: we have
\beq\label{eq:zeta_int}
\zeta_n(x,t) = \int_0^{t} f_{n,\round{ns}}(x)\,\rd s
\eeq
and
\beq\label{eq:xi_int}
\xi_n(x,t) = n^\frac12 \int_0^{t} \bar f_{n,\round{ns}}(x) \,\rd s
\eeq
from~\eqref{eq:zeta} and~\eqref{eq:xi}, respectively.
From here on, we will routinely drop the $x$-dependence from the notation, writing just~$\zeta_n(t)$ instead of~$\zeta_n(x,t)$, etc.
Of course, 
\beq\label{eq:xi_diff}
\xi_n(t_2)-\xi_n(t_1) 
= n^{\frac12} \int_{t_1}^{t_2} \bar f_{n,\round{ns}} \,  \rd s \ . 
\eeq

\medskip
Let us record a useful bound: given any~$\eta'\in(\frac12,\eta)$, $\mu_{n,\round{ns}}(f) = \mu(f_{n,\round{ns}})$ and~\eqref{eq:sdensity} imply
\beq\label{eq:pushforward_SRB_int}
\mu(f_{n,\round{ns}}) - \hat\mu_s(f) 
= O(n^{-\eta'})\ ,\qquad s\ge bn^{-1}\log n\ .
\eeq

\medskip
\subsection{The variance $\hat\sigma_t^2(f)$}
\begin{lem}\label{lem:sigma}
Recall that $\hat f_t = f - \hat\mu_t(f)$. The limit variance in~\eqref{eq:sigma} can be expressed in terms of the series
\beq\label{eq:sigma_hat}
\hat\sigma_t^2(f) = \hat \mu_t[\hat f_t^2] + 2\sum_{k=1}^\infty \hat \mu_t[\hat f_t \hat f_t\circ \gamma_t^k] = \hat \mu_t[\hat f_t^2] + 2\sum_{k=1}^\infty \fm[\hat f_t \cL_t^k(\hat\rho_t \hat f_t)] \ .
\eeq
The map $[0,1]\to\bR_+:t\mapsto\hat\sigma_t^2(f)$ is (uniformly) continuous. 
\end{lem}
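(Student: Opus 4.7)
The plan is to handle the identity~\eqref{eq:sigma_hat} first (which is purely algebraic given~\eqref{eq:rho_f_decay}) and then to obtain continuity by showing that the defining series converges uniformly in $t$ to a sum of continuous functions on the compact interval~$[0,1]$.

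First I would expand the square in~\eqref{eq:sigma} and use $\gamma_t$-invariance of~$\hat\mu_t$ to group terms by $\ell = |j-k|$:
\beqn
\hat\mu_t\!\left[\biggl(\tfrac{1}{\sqrt m}\sum_{k=0}^{m-1}\hat f_t\circ\gamma_t^k\biggr)^2\right]
= \hat\mu_t[\hat f_t^2] + 2\sum_{\ell=1}^{m-1}\Bigl(1-\tfrac{\ell}{m}\Bigr)\hat\mu_t[\hat f_t\cdot\hat f_t\circ\gamma_t^\ell].
\eeqn
Duality of~$\cL_t$ against~$\fm$ then gives $\hat\mu_t[\hat f_t\cdot\hat f_t\circ\gamma_t^\ell] = \fm[\hat f_t\,\cL_t^\ell(\hat\rho_t\hat f_t)]$. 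Using~\eqref{eq:rho_f_decay}, each such term is bounded in absolute value by $\|\hat f_t\|_\infty\,\|\cL_t^\ell(\hat\rho_t\hat f_t)\|_{L^1}\le C\vartheta^\ell$, with $C$ depending only on $\|f\|_{\mathrm{Lip}}$. Dominated convergence in~$\ell$ lets me send $m\to\infty$ term-by-term, yielding both expressions in~\eqref{eq:sigma_hat}.

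For continuity, write $a_\ell(t) = \fm[\hat f_t\,\cL_t^\ell(\hat\rho_t\hat f_t)]$ so that $\hat\sigma_t^2(f) = a_0(t) + 2\sum_{\ell\ge 1}a_\ell(t)$. The uniform bound $|a_\ell(t)|\le C\vartheta^\ell$ established above makes the series uniformly convergent on $[0,1]$, so it suffices to prove that each $a_\ell$ is continuous in~$t$. I would telescope:
\beqn
a_\ell(t)-a_\ell(s) = \fm\bigl[(\hat f_t-\hat f_s)\cL_t^\ell(\hat\rho_t\hat f_t)\bigr] + \fm\bigl[\hat f_s\,\cL_t^\ell(\hat\rho_t\hat f_t-\hat\rho_s\hat f_s)\bigr] + \fm\bigl[\hat f_s\,(\cL_t^\ell-\cL_s^\ell)(\hat\rho_s\hat f_s)\bigr].
\eeqn
The first two pieces are controlled by $\|\hat f_t-\hat f_s\|_\infty$ and $\|\hat\rho_t-\hat\rho_s\|_{L^1}$, both of which are $O(|t-s|^{\eta'})$ for any $\eta'<\eta$ by Lemma~\ref{lem:SRB_parameter_regularity} (note $\hat f_t - \hat f_s = \fm[(\hat\rho_s-\hat\rho_t)f]$ is a constant). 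The third piece is bounded via~\eqref{eq:telescope_bound} and~\eqref{eq:transfer_parameter_regularity} by $C\ell\,|t-s|^\eta\,\|\hat\rho_s\hat f_s\|_{\mathrm{Lip}}$, which is finite since $\hat\rho_s$ is Lipschitz uniformly in~$s$ by~\eqref{eq:Lip_bound} and $f\in\mathrm{Lip}$. Hence $|a_\ell(t)-a_\ell(s)|\le C_\ell|t-s|^{\eta'}$, proving continuity of each $a_\ell$ on $[0,1]$.

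A uniform limit of continuous functions on the compact interval $[0,1]$ is continuous, and continuity on a compact set is automatically uniform, which gives the final claim. The main obstacle is the third piece above, where the bound from~\eqref{eq:telescope_bound} carries a factor of~$\ell$ that would blow up if one tried to prove Hölder continuity of the whole sum directly; the remedy is to separate the two limits (first fix~$\ell$ to get continuity of $a_\ell$, then sum using the $\ell$-independent exponential bound), which is why the argument works despite the linear growth in~$\ell$ of the per-term estimate.
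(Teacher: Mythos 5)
Your proof is correct and follows essentially the same route as the paper: both establish~\eqref{eq:sigma_hat} via the $(1-\ell/m)$-weighted expansion of the square together with the exponential bound~\eqref{eq:rho_f_decay}, and both obtain continuity by truncating the uniformly convergent series and estimating each fixed-$\ell$ term through a telescope using~\eqref{eq:telescope_bound},~\eqref{eq:Lip_bound} and~\eqref{eq:SRB_parameter_regularity}. The only cosmetic difference is that you keep the $t$-dependence of $\hat f_t$ explicit (and handle the constant $\hat f_t-\hat f_s$ separately), whereas the paper first rewrites $\fm[\hat f_t\cL_t^k(\hat\rho_t\hat f_t)]=\fm[f\cL_t^k(\hat\rho_t f)]-\fm(\hat\rho_t f)^2$ so that the $t$-dependence sits only in $\hat\rho_t$ and $\cL_t$.
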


\medskip
\begin{proof}
Notice that the two series are equal term by term. By~\eqref{eq:rho_f_decay},
\beqn
\sup_{0\le s\le 1} |\fm[\hat f_s \cL_t^k(\hat\rho_s \hat f_s)]|\le C\vartheta^k \ ,
\eeqn
so the series converge absolutely at an exponential rate. On the other hand, a direct manipulation of~\eqref{eq:sigma} shows that
\beqn
\hat\sigma_t^2(f) = \hat\mu_t(\hat f_t^2) + 2\lim_{m\to\infty} m^{-1}\sum_{k=1}^{m-1}(m-k) \hat\mu_t(\hat f_t \hat f_t\circ\gamma_t^{k})\ ,
\eeqn
which suffices to prove~\eqref{eq:sigma_hat}.

Defining the truncated sum
\beqn
V_{K,t} = \hat \mu_t[\hat f_t^2] + 2\sum_{k=1}^K \fm[\hat f_t \cL_t^k(\hat\rho_t \hat f_t)] \ ,
\eeqn
we have
\beqn
|\hat\sigma_t^2(f) - \hat\sigma_s^2(f)| \le |V_{K,t}-V_{K,s}| + C\vartheta^K 
\eeqn
for all $s\in[0,1]$ and all $K>0$. Given $\ve>0$, we fix $K$ so large that $C\vartheta^K<\ve/2$. Note that $V_{K,t} = [\fm(\hat\rho_t f^2)-\fm(\hat \rho_t f)^2] + 2\sum_{k=1}^K [\fm(f\cL_t^k(\hat\rho_t f))-\fm(\hat \rho_t f)^2]$. Moreover,
\beqn
\begin{split}
& |\fm(f\cL_t^k(\hat\rho_t f)) - \fm(f\cL_s^k(\hat\rho_s f))| 
\\
& \qquad \le \|f\|_\infty \|\cL_t^k(\hat\rho_t f) - \cL_s^k(\hat\rho_s f)\|_{L^1}
\\
& \qquad \le \|f\|_\infty \|(\cL_t^k-\cL_s^k)(\hat\rho_t f)\|_{L^1} + \|f\|_\infty \|\cL_s^k(\hat\rho_t f-\hat\rho_s f)\|_{L^1}
\\
& \qquad \le \|f\|_\infty \|\cL_t^k-\cL_s^k\|_{\mathrm{Lip}\to C^0}\|\hat\rho_t f\|_{\mathrm{Lip}} + \|f\|_\infty \|(\hat\rho_t-\hat\rho_s) f\|_{L^1}
\\
& \qquad \le \|f\|_{\mathrm{Lip}}^2 \left( \|\cL_t^k-\cL_s^k\|_{\mathrm{Lip}\to C^0}\|\hat\rho_t \|_{\mathrm{Lip}} + \|\hat\rho_t-\hat\rho_s\|_{L^1} \right)\ .
\end{split}
\eeqn
By~\eqref{eq:Lip_bound},~\eqref{eq:telescope_bound} and~\eqref{eq:SRB_parameter_regularity}, we see that $ |V_{K,t}-V_{K,s}|<\ve/2$ and $|\hat\sigma_t^2(f) - \hat\sigma_s^2(f)|<\ve$ for any~$s$ such that $|t-s|$ is sufficiently small. This proves continuity.
\end{proof}

\medskip

\subsection{The second moment $\mu\!\left[[\xi_n(t+h)-\xi_n(t)]^2\right]$}

\begin{lem}\label{lem:variance}
Let $L>0$ and $\rho\in\cD_L$. The quantity $\mu\!\left[[\xi_n(t)-\xi_n(s)]^2\right]$ is uniformly bounded over all $t,s\in[0,1]$ and $n\ge 1$. Moreover,
\beq\label{eq:var_limit}
\mu\!\left[[\xi_n(t+h)-\xi_n(t)]^2\right] = \int_t^{t+h} \hat\sigma_s^2(f)\,\rd s +   o(n^{-\frac12})+h\, o(1)\ ,
\eeq
as $n\to\infty$, whenever $0\le t\le t+h\le 1$. The error terms are uniform in~$t$ and~$h$.\footnote{We remark that the exponent in $o(n^{-\frac12})$ is not optimal, but a choice of convenience.} Finally,
\beq\label{eq:var_int}
\int_t^{t+h} \hat\sigma_s^2(f)\,\rd s\ = h\, \hat\sigma_t^2(f) + o(h) \ ,
\eeq
as $h\to 0$. The error term is uniform in~$t$. All of the bounds depend on~$\rho$ through~$L$ only.
\end{lem}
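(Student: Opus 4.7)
The plan is to expand the second moment via Fubini,
\[
\mu\bigl[[\xi_n(t+h)-\xi_n(t)]^2\bigr] = n\iint_{[t,t+h]^2} C_n(s_1,s_2)\,\rd s_1\,\rd s_2,\qquad C_n(s_1,s_2):= \mu(\bar f_{n,\round{ns_1}}\bar f_{n,\round{ns_2}}),
\]
and reduce it to a Riemann-sum approximation of $\int_t^{t+h}\hat\sigma_s^2(f)\,\rd s$. The key step is a pointwise swap of the non-stationary covariance for a stationary one. By symmetry assume $s_1\le s_2$ with $s_1\ge bn^{-1}\log n$, and write $k_i=\round{ns_i}$, $k=k_2-k_1$. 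Duality yields
\[
\mu(f_{n,k_1}f_{n,k_2}) = \fm\bigl[f\cdot \cL_{n,k_2}\cdots\cL_{n,k_1+1}(\rho_{n,k_1}f)\bigr].
\]
I would replace $\rho_{n,k_1}$ by $\hat\rho_{s_1}$ using Lemma~\ref{lem:combineddensities}, the operator product by $\cL_{s_1}^{k}$ using the telescope bound~\eqref{eq:telescope_bound} together with H\"older continuity of $\gamma$, and each centering $\mu(f_{n,k_i})$ by $\hat\mu_{s_1}(f)$ via~\eqref{eq:pushforward_SRB_int}. Collecting, this gives the pointwise asymptotic
\[
C_n(s_1,s_2) = \fm\bigl[\hat f_{s_1}\,\cL_{s_1}^{k}(\hat\rho_{s_1}\hat f_{s_1})\bigr] + O\bigl(n^{-\eta'}+k^{1+\eta}n^{-\eta}\bigr)
\]
for any $\eta'<\eta$.

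Since $C_n(s_1,\slot)$ is piecewise-constant on intervals of length $n^{-1}$ (indexed by $k$), the inner $s_2$-integral reduces to a discrete sum. Truncating at $|k|\le k_0:=\lceil M\log n\rceil$ for $M$ large, the tail is negligible: both $C_n(s_1,s_2)$ (by Lemma~\ref{lem:dec}) and $\fm[\hat f_{s_1}\cL_{s_1}^{|k|}(\hat\rho_{s_1}\hat f_{s_1})]$ (by~\eqref{eq:rho_f_decay}) are $O(\vartheta^{k_0})=O(n^{-A})$ for any desired $A$. For the head, the series representation of Lemma~\ref{lem:sigma} together with $\fm[\hat f_{s_1}\cL_{s_1}^{0}(\hat\rho_{s_1}\hat f_{s_1})]=\hat\mu_{s_1}[\hat f_{s_1}^{\,2}]$ gives
\[
\sum_{|k|\le k_0} \fm\bigl[\hat f_{s_1}\cL_{s_1}^{|k|}(\hat\rho_{s_1}\hat f_{s_1})\bigr] = \hat\sigma_{s_1}^2(f) + O(\vartheta^{k_0}).
\]
A second Riemann-sum step in $s_1\in[t,t+h]$, justified by uniform continuity of $s\mapsto\hat\sigma_s^2(f)$ from Lemma~\ref{lem:sigma}, produces $\int_t^{t+h}\hat\sigma_s^2(f)\,\rd s$ with discretisation error $h\cdot o(1)+O(n^{-1})$. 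The identity~\eqref{eq:var_int} is then immediate from the uniform continuity of $\hat\sigma_s^2(f)$, and uniform boundedness follows from~\eqref{eq:var_limit} applied with $h\le 1$ (using $\sup_s \hat\sigma_s^2(f)<\infty$).

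The main obstacle is error bookkeeping: the operator-telescope error $k^{1+\eta}n^{-\eta}$ grows with $k$, so it is essential to truncate at $k_0=O(\log n)$ and handle the tail by exponential decorrelation. Summing the per-pair errors $O(n^{-\eta'}+k_0^{1+\eta}n^{-\eta})$ over $\sim nh$ choices of $k_1$ and $\sim k_0$ choices of $k$, weighted by the outer $n^{-1}$, contributes $h\cdot o(1)$. A minor technicality is the boundary strip $s_1<bn^{-1}\log n$ to which Lemma~\ref{lem:combineddensities} does not directly apply; there the a priori bound $|C_n|\le 4\|f\|_\infty^2$ combined with the strip length $O(n^{-1}\log n)$ gives an additional $O(n^{-1}\log n)=o(n^{-1/2})$ contribution, absorbed into the stated error.
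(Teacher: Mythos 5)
Your proposal is correct and follows essentially the same strategy as the paper's proof: expand the second moment via Fubini, establish uniform boundedness from the pairwise decorrelation bound of Lemma~\ref{lem:dec}, and for the asymptotics restrict to a thin band around the diagonal where you swap the non-stationary pushforward density and operator product for their stationary counterparts (via Lemma~\ref{lem:combineddensities} and the telescope bound~\eqref{eq:telescope_bound}) to recognize the series from Lemma~\ref{lem:sigma}, handling the off-diagonal tail by exponential decorrelation and the low-$s_1$ boundary strip by the a priori bound. The one cosmetic difference is the width of the diagonal band: the paper takes $a_n = n^{-1+\kappa}$ (i.e.\ $\sim n^\kappa$ iterates) with $\kappa$ chosen so that $2\kappa < \eta'(1-\kappa)$, whereas you truncate at $k_0 = O(\log n)$ iterates, which trivially satisfies all the constraints and yields slightly cleaner bookkeeping at the cost of a few logarithmic factors; either choice works.

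Two very minor points of care. First, the pointwise error should also carry a term $O((k/n)^{\eta'})$ coming from replacing $\hat\mu_{s_2}(f)$ by $\hat\mu_{s_1}(f)$ in the centering; this is not literally $O(k^{1+\eta}n^{-\eta})$ but is absorbed into the final $h\cdot o(1)$ after summation over $|k|\le k_0$ (or by slightly decreasing $\eta'$). Second, besides the boundary strip $s_1 < bn^{-1}\log n$ you name, there is the symmetric strip where $s_1$ is within $a_n$ of $t+h$ (and $t$, if $t>0$), where the inner band is truncated at the edge of $[t,t+h]$; its contribution is of the same small order $O(n a_n^2)$ and is handled identically, so this is a small omission rather than a gap.
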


\medskip 
\begin{proof}
Given~$L>0$, the maximum of~$\rho$ is bounded by a constant determined by~$L$.
Because $\mu(\bar f_{n,k})=0$, 
\beq\label{eq:second_temp1}
\begin{split}
    \mu\!\left[[\xi_n(t+h)-\xi_n(t)]^2\right] &
     = n\int_t^{t+h}\!\!\!\int_t^{t+h} \mu( \bar f_{n,\round{ns}} \bar f_{n,\round{nr}} ) \,\rd r\,\rd s \ ,
\end{split}
\eeq
as $n\to\infty$. 
By Lemma~\ref{lem:dec},
\beq\label{eq:pair_bound}
|\mu(\bar f_{n,\round{ns}} \bar f_{n,\round{nr}}) | \le C\vartheta^{n|r-s|}\ .
\eeq
The boundedness claim of the lemma now follows from elementary integration:
\beqn
\begin{split}
n\int_t^{t+h}\!\!\!\int_t^{t+h} \vartheta^{n|r-s|}  \,\rd r\,\rd s 
& = 2n\int_0^{h}\!\!\int_s^{h} \vartheta^{n(r-s)}  \,\rd r\,\rd s
 = \frac{2}{\log\vartheta}\int_0^{h} (\vartheta^{n(h-s)} - 1)\,\rd s \le \frac{2h}{|\!\log\vartheta|} \ .
\end{split}
\eeqn

As to the second claim, 
let~$\kappa\in(0,\frac14)$ be small enough that~$2\kappa < \eta'(1-\kappa)$ and 
set~$a_n = n^{-1+ \kappa}$. 
We shall now show that, in the limit $n\to\infty$, the sole contribution to the double integral $n\int_t^{t+h} \!\! \int_t^{t+h} \mu( \bar f_{n,\round{ns}} \bar f_{n,\round{nr}} ) \,\rd r\,\rd s$ comes from the parallelepiped $P_n = \{(s,r)\in[t,t+h]^2\,:\,\text{$t+2a_n\le s \le t+h-a_n$ and $|r-s|\le a_n$}\}$ about the diagonal. To that end, let 
\beqn
Q_n = \{(s,r)\in[t,t+h]^2\,:\,\text{$|r-s|\le a_n$ and either $s<t+2a_n$ or $s>t+h-a_n$}\}
\eeqn
and let 
$R_n = \{(s,r)\in[t,t+h]^2\,:\,|r-s|> a_n\}$,
so that $[t,t+h]^2 = P_n \cup Q_n \cup R_n$. Because the area of $Q_n$ is $O(a_n^2)$, 
\beqn
n\iint_{Q_n} \mu( \bar f_{n,\round{ns}} \bar f_{n,\round{nr}} ) \,\rd r\,\rd s = O(na_n^2) = O(n^{-1 + 2\kappa})\ .
\eeqn
On the other hand,~\eqref{eq:pair_bound} yields
\beqn
n\iint_{R_n} \mu( \bar f_{n,\round{ns}} \bar f_{n,\round{nr}} ) \,\rd r\,\rd s = O(n\vartheta^{na_n}) = o(n^{-1})\ .
\eeqn
Thus, only the contribution of $P_n$ is significant:
\beq\label{eq:second_temp2}
n\int_t^{t+h}\!\!\!\int_t^{t+h} \mu( \bar f_{n,\round{ns}} \bar f_{n,\round{nr}} ) \,\rd r\,\rd s = n\int_{t+2a_n}^{t+h-a_n}\!\!\!\int_{s-a_n}^{s+a_n} \mu( \bar f_{n,\round{ns}} \bar f_{n,\round{nr}} ) \,\rd r\,\rd s + O(n a_n^2) \ .
\eeq

Note that on $P_n$, $s-a_n\ge t+a_n\ge a_n\ge bn^{-1}\log n$ for all but finitely many~$n$. (This motivates the odd $2a_n$ in the definition of $P_n$.) 
By~\eqref{eq:rsdensity}, 
\beq \label{eq:sa_n}
\sup_{r\in(s-a_n,s+a_n)} \|\rho_{n,\round{nr}} - \hat\rho_s\|_{L^1} = O(n^{-\eta'}+a_n^{\eta'}) = O(a_n^{\eta'})\ ,
\eeq
so
\beqn
\sup_{r\in(s-a_n,s+a_n)} |\mu(f_{n,\round{nr}}) - \hat\mu_s(f)| =  O(a_n^{\eta'})\ .
\eeqn
This implies
\beq\label{eq:conv_temp}
n \int_{s-a_n}^{s+a_n} \mu( \bar f_{n,\round{ns}} \bar f_{n,\round{nr}} ) \,\rd r = n \int_{s-a_n}^{s+a_n} \mu(f_{n,\round{ns}} f_{n,\round{nr}} ) - \hat\mu_s(f)^2 \,\rd r + O(n a_n^{1 + \eta'})\ .
\eeq
We split the domain of integration $[s-a_n,s+a_n]$ on the right side into two halves. Setting $b_n = \frac{1}{n}(1-\{ns\})$, we have (using~\eqref{eq:sa_n} to pass to the third line)
\beqn
\begin{split}
& n \int_{s}^{s+a_n} \mu(f_{n,\round{ns}} f_{n,\round{nr}} ) \,\rd r
= n \int_{0}^{a_n} \mu(f_{n,\round{ns}} f_{n,\round{n(s+r)}} ) \,\rd r
\\
=\ & b_n n  \mu_{n,\round{ns}}(f^2) +  n\int_{b_n}^{a_n} \mu_{n,\round{ns}}(f f\circ T_{n,\round{n(s+r)}}\circ\dots\circ T_{n,\round{ns}+1} ) \,\rd r
\\
=\ & b_n n  \hat\mu_{s}(f^2) +  n\int_{b_n}^{a_n} \hat\mu_{s}(f f\circ T_{n,\round{n(s+r)}}\circ\dots\circ T_{n,\round{ns}+1} ) \,\rd r + O(a_n^{\eta'}+n a_n^{1+ \eta'})
\\
=\ & n \int_0^{b_n} \fm(f\hat\rho_{s} f)\, \rd r +  n\int_{b_n}^{a_n} \fm (f \cL_{n,\round{n(s+r)}}\cdots\cL_{n,\round{ns}+1}(\hat\rho_{s} f)  ) \,\rd r + O(na_n^{1+\eta'}) \ .
\end{split}
\eeqn
We can replace $\cL_{n,\round{n(s+r)}}\cdots\cL_{n,\round{ns}+1}$ by $\cL_s^{\round{n(s+r)}-\round{ns}}$ since, recalling~\eqref{eq:telescope_bound} and~\eqref{eq:Lip_bound}, 
\beqn
\begin{split}
& \left\|\cL_{n,\round{n(s+r)}}\cdots\cL_{n,\round{ns}+1}(\hat\rho_{s} f) - \cL_s^{\round{n(s+r)}-\round{ns}} (\hat\rho_{s} f)\right\|_{L^1}
\\
\le\ & \|\cL_{n,\round{n(s+r)}}\cdots\cL_{n,\round{ns}+1}- \cL_s^{\round{n(s+r)}-\round{ns}}\|_{\mathrm{Lip}\to C^0} \|\hat\rho_s f\|_{\mathrm{Lip}}
\\
\le\ & C n a_n^{1+\eta} = O(n a_n^{1 + \eta'})
\end{split}
\eeqn
uniformly for $r\in[0,a_n]$. Hence,
\beqn
\begin{split}
& n \int_{s}^{s+a_n} \mu(f_{n,\round{ns}} f_{n,\round{nr}} ) \,\rd r
= n\int_{0}^{a_n} \fm (f \cL_s^{\round{n(s+r)}-\round{ns}}(\hat\rho_{s} f)  ) \,\rd r + O(n^2 a_n^{2 + \eta'})\ .
\end{split}
\eeqn
A similar computation, which we leave to the reader, yields
\beqn
\begin{split}
& n \int_{s-a_n}^{s} \mu(f_{n,\round{ns}} f_{n,\round{nr}} ) \,\rd r
    = n\int_{-a_n}^{0} \fm (f \cL_s^{\round{ns}-\round{n(s+r)}}(\hat\rho_{s} f)  ) \,\rd r + O(n^2 a_n^{2 + \eta'})\ .
\end{split}
\eeqn
By~\eqref{eq:conv_temp}, we have thus shown that
\beqn
\begin{split}
& n \int_{s-a_n}^{s+a_n} \mu(\bar f_{n,\round{ns}} \bar f_{n,\round{nr}} ) \,\rd r 
\\
& = n\int_{-a_n}^{a_n} \fm (\hat f_s \cL_s^{|\round{n(s+r)}-\round{ns}|}(\hat\rho_{s} \hat f_s)  ) \,\rd r + O(n^2a_n^{2 + \eta'})
\\
& = n\int_{-\infty}^{\infty} \fm (\hat f_s \cL_s^{|\round{n(s+r)}-\round{ns}|}(\hat\rho_{s} \hat f_s)  ) \,\rd r + O(\vartheta^{na_n} + n^2 a_n^{2+\eta'})
\\
& = \hat\sigma_s^2(f) + O(n^2a_n^{2+\eta'}) \ .
\end{split}
\eeqn
The second last line follows from $|\fm (\hat f_s \cL_s^{k}(\hat\rho_{s} \hat f_s))|\le C\|\cL_s^{k}(\hat\rho_{s} \hat f_s)\|_{L^1}\le C\vartheta^k$ (see~\eqref{eq:rho_f_decay}) and the last one from~\eqref{eq:sigma_hat}. Recalling~\eqref{eq:second_temp2} and~\eqref{eq:second_temp1}, we obtain
\beqn
\begin{split}
 \mu\!\left[[\xi_n(t+h)-\xi_n(t)]^2\right] & = \int_{t+2a_n}^{t+h-a_n}\hat\sigma_s^2(f)\,\rd s +  O\!\left(n a_n^{2} + hn^2a_n^{2+\eta'}\right) 
\\
& = \int_{t}^{t+h}\hat\sigma_s^2(f)\,\rd s +  O\!\left(a_n + n a_n^2+ h n^2 a_n^{2+\eta'}\right) \\ 
& = \int_{t}^{t+h}\hat\sigma_s^2(f)\,\rd s +  
O\!\left(n^{-1+2\kappa} + h n^{2\kappa -\eta'(1-\kappa)}\right)
\ ,
\end{split}
\eeqn
which implies~\eqref{eq:var_limit}, by choice of $\kappa$.

\medskip
Finally, the function $s\mapsto\hat\sigma^2_s(f)$ is uniformly continuous by Lemma~\ref{lem:sigma}. Hence, it has an increasing modulus of continuity $w:[0,1]\to\bR_+$ such that $|\hat\sigma^2_t(f)-\hat\sigma^2_s(f)|\le w(|t-s|)$ holds for all $s,t\in[0,1]$ and~$\lim_{\delta\to 0} w(\delta) = 0$. Therefore, $|\int_t^{t+h} \hat\sigma_s^2(f)\,\rd s - h\, \hat\sigma_t^2(f)|\le h\,w(h)=o(h)$ as $h\to 0$, uniformly in~$t$.
\end{proof}

\medskip
\subsection{Decorrelation at the process level}
Next, we introduce useful partitions of $\bS$ having the property that $x\mapsto \xi_n(x,t)$ (and $x\mapsto \zeta_n(x,t)$) is nearly constant on each partition element. To that end, fix $z\in\bS$. For any integer $n\ge 1$ and real number $t\in(0,1)$, there exists an induced partition $\cP_{z,n,t} = \{I_{z,n,t,j}\}_{j=1}^{N_{n,t}}$ of the arc $J_z = \bS\setminus\{z\}$ into subarcs $I_{z,n,t,j}$ with the property that the restriction of $T_{n,\round{nt}}\circ\dots\circ T_{n,1}$ to $I_{z,n,t,j}$ is one-to-one and onto $J_{z_{n,t}} = \bS\setminus\{z_{n,t}\}$ for all~$j$. Here $z_{n,t} = T_{n,\round{nt}}\circ\dots\circ T_{n,1}(z)$. It follows from the uniform expansion property that
\beqn
|T_{n,\round{ns}}\circ\dots\circ T_{n,1}(I_{z,n,t,j})| \le C \lambda^{n(s-t)}
\eeqn
for all $s\le t$. Since $f\in \mathrm{Lip}$, this yields the uniform bound
\beq\label{eq:xi_difference}
\left|\xi_n(x,s) - \xi_n(y,s) \right| \le C n^{-\frac12} 
\eeq
for all $x,y\in I_{z,n,t,j}$ and $j$, for all $s\le t$.  Given bounded and Lipschitz continuous functions~$B_1,\dots,B_m$ on $\bR$, real numbers $0\le t_1<\dots<t_m\le t$, and a probability measure~$\mu$ with density~$\rho>0$, integrating the previous bound with respect to~$\rd\mu(y)$ yields the existence of $C$ --- determined by $\prod_{1\le k\le m}\|B_k\|_\mathrm{Lip}$ --- such that
\beq\label{eq:ave}
\left|B_1(\xi_n(x,t_1))\dots B_m(\xi_n(x,t_m)) - \mu_{z,n,t,j}[\, B_1(\xi_n(t_1))\dots B_m(\xi_n(t_m))] \right| \le C n^{-\frac12}
\eeq
for all $x\in I_{z,n,t,j}$ and $j$. Here $\mu_{z,n,t,j}$ denotes the conditional measure~$\frac1{\mu(I_{z,n,t,j})}\mu[1_{I_{z,n,t,j}}\slot]$ .

\medskip
\begin{lem}\label{lem:dec_moments}
Suppose $\rho\in\cup_{L>0}\cD_L$.

\medskip
\noindent (i)
If $A\in C^\infty(\bR)$, then
\beqn
 \mu\!\left[A(\zeta_n(s)) [\zeta_n(t)-\zeta_n(s)]\right] -  \mu\!\left[A(\zeta_n(s))\right] \mu\!\left[\zeta_n(t)-\zeta_n(s)\right] = o(1)
\eeqn
as $n\to\infty$, whenever $0\le s\le t\le 1$.

\medskip
\noindent (ii) If $A\in C^\infty_c(\bR)$ and $q\in\{1,2\}$, then
\beqn
 \mu\!\left[A(\xi_n(s)) [\xi_n(t)-\xi_n(s)]^q\right] -  \mu\!\left[A(\xi_n(s))\right] \mu\!\left[[\xi_n(t)-\xi_n(s)]^q\right] = o(1)
\eeqn
as $n\to\infty$, whenever $0\le s\le t\le 1$.
\end{lem}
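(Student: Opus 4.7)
The strategy is to use the induced partition $\cP_{z,n,s}$ to localize the factor $A(\xi_n(s))$ (respectively $A(\zeta_n(s))$), on each element of which $x\mapsto \xi_n(x,s)$ is essentially constant by~\eqref{eq:xi_difference}. The key structural input is Lemma~\ref{lem:D_stability}(iii): for $n$ sufficiently large, each conditional measure $\mu_{z,n,s,j}$ pushed forward to time $\round{ns}$ has a density in $\cD_{L_*}$, with $L_*$ independent of $j$. This reduces estimation of conditional moments of the ``future increment'' to a time-shifted version of estimates already proved. The case $s=0$ is trivial since $\xi_n(\slot,0)\equiv 0$, so we may assume $s>0$ and $\round{ns}\ge N(L)$ for $n$ large.

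Fixing $x_j^*\in I_{z,n,s,j}$, Lipschitzness of $A$ on the relevant range lets us replace $A(\xi_n(s))$ by $A(\xi_n(x_j^*,s))$ at $L^\infty$-cost $O(n^{-1/2})$ on each element. Summing against the $q$-th moment and invoking Cauchy--Schwarz together with the uniform moment bound of Lemma~\ref{lem:variance} yields
\begin{equation*}
\mu[A(\xi_n(s))(\xi_n(t)-\xi_n(s))^q] = \sum_j \mu(I_j)\, A(\xi_n(x_j^*,s))\, \mu_{z,n,s,j}[(\xi_n(t)-\xi_n(s))^q] + o(1).
\end{equation*}
For part~(i), the uniform bound $\|\zeta_n\|_\infty\le\|f\|_\infty$ means $A$ need not be compactly supported, and the partitioning error reduces to $O(n^{-1})$.

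The crucial claim is then
\begin{equation*}
\mu_{z,n,s,j}[(\xi_n(t)-\xi_n(s))^q] = \mu[(\xi_n(t)-\xi_n(s))^q] + o(1)\quad\text{uniformly in }j.
\end{equation*}
For $q=1$, duality combined with Lemma~\ref{lem:memory_loss} applied to the two pushforward densities at time $\round{ns}$ (both in $\cD_{L_*}$) gives $\mu_{z,n,s,j}(f_{n,\round{nr}}) - \mu(f_{n,\round{nr}}) = O(\vartheta^{n(r-s)})$. Integrating over $r\in[s,t]$ and multiplying by $n^{1/2}$ yields $O(n^{-1/2})$. The same computation with $\zeta_n$ in place of $\xi_n$ gives $O(n^{-1})$, settling part~(i).

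The principal obstacle is the $q=2$ case. Decomposing $\bar f_{n,\round{nr}} = \tilde f^j_{n,\round{nr}} + \delta^j_{n,\round{nr}}$, with $\tilde f^j_{n,k} = f_{n,k} - \mu_{z,n,s,j}(f_{n,k})$ centered under $\mu_{z,n,s,j}$ and $\delta^j_{n,k}=O(\vartheta^{k-\round{ns}})$ by the $q=1$ bound, the cross terms vanish upon $\mu_{z,n,s,j}$-integration while the $(\delta^j)^2$ contribution integrates to $O(n^{-1})$. The residual term is $\mu_{z,n,s,j}[(\tilde\xi^j_n(t)-\tilde\xi^j_n(s))^2]$, where $\tilde\xi^j_n$ denotes $\xi_n$ with conditional centering. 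This calls for a variant of Lemma~\ref{lem:variance} in which the initial time is $\round{ns}$ and the initial density is the pushforward $\psi^j\in\cD_{L_*}$ (uniformly in $j$) rather than $\rho$ at time $0$. Since the proof of Lemma~\ref{lem:variance} relies only on Lemmas~\ref{lem:combineddensities} and~\ref{lem:dec}, whose constants depend only on the $\cD_L$-class of the relevant density, the shifted version yields the same limit $\int_s^t\hat\sigma_r^2(f)\,dr$ uniformly in $j$. Combining with Lemma~\ref{lem:variance} itself gives the uniform cancellation, and reversing the Step~1 approximation to recognize $\sum_j\mu(I_j)A(\xi_n(x_j^*,s)) = \mu[A(\xi_n(s))]+o(1)$ completes the proof.
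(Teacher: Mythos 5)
Your argument is correct in substance, and for $q=1$ and part~(i) it is essentially the same as the paper's; but for $q=2$ you take a genuinely different route from the one in the text. Both proofs open identically: localize $A(\xi_n(s))$ on the induced partition $\cP_{z,n,s}$ via the oscillation bound~\eqref{eq:xi_difference} and reduce the problem to showing that $\mu_{z,n,s,j}\bigl[[\xi_n(t)-\xi_n(s)]^q\bigr]-\mu\bigl[[\xi_n(t)-\xi_n(s)]^q\bigr]=o(1)$ uniformly in $j$. The paper then dispatches both values of $q$ at once with a single trick: fix $p\in(\tfrac12,1)$ and replace the lower integration limit $s$ by $s+n^{-p}$. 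This perturbs $\xi_n(t)-\xi_n(s)$ only by $O(n^{1/2-p})=o(1)$, but the shifted increment now factors through the dynamics up to time $\round{n(s+n^{-p})}$ and is pointwise bounded by $Cn^{1/2}$, so its $q$-th power is $O(n^{q/2})$; by duality the discrepancy of expectations is then controlled by $n^{q/2}$ times the $L^1$ distance of the two pushforward densities at time $\round{n(s+n^{-p})}$, and Lemma~\ref{lem:memory_loss} gives the super-exponential bound $\vartheta^{n^{1-p}}$, which overwhelms any power of $n$. No evaluation of the moments is needed. Your proof, instead, recenters $\bar f$ relative to $\mu_{z,n,s,j}$, observes that the constant shift $\delta^j$ contributes only $O(n^{-1})$ to the second moment, and then invokes a time-shifted version of Lemma~\ref{lem:variance} (initial time $\round{ns}$, initial density $\psi^j\in\cD_{L_*}$) to show both conditional and unconditional second moments converge to $\int_s^t\hat\sigma_r^2(f)\,dr$. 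This does work: the ingredients of Lemma~\ref{lem:variance} (Lemmas~\ref{lem:combineddensities},~\ref{lem:dec}, the telescoping bound~\eqref{eq:telescope_bound}, the continuity of $\hat\sigma_t^2$) are all translation-covariant along the array, the constants and error rates depend only on the $\cD_L$-class and the curve $\gamma$, and the $o(1)$ errors are uniform in $j$. What the paper's $n^{-p}$ shift buys is a shorter, unified argument that treats $q=1$ and $q=2$ by the same crude bound and avoids re-deriving the variance computation under time-shift; what your route buys is a more transparent identification of the common limit value, at the cost of having to verify (in somewhat more detail than you indicate) that the proof of Lemma~\ref{lem:variance} indeed goes through unchanged with a shifted initial time and conditional initial density.
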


\medskip
\begin{proof}
Note that, since $\zeta_n$ is uniformly bounded, it suffices to assume $A\in C^\infty_c(\bR)$ in both parts of the lemma. We only prove part~(ii) concerning~$\xi_n$, and leave the easier, but similar, part~(i) concerning $\zeta_n$ to the reader.

Let $z\in\bS$ be a point such that $\log\rho$ is Lipschitz continuous on $J_z$ with constant $L>0$, and consider the induced partition $\cP_{z,n,s} = \{I_{z,n,s,j}\}_{j=1}^{N_{n,s}}$ relative to this point.
Since $\mu\!\left[[\xi_n(t)-\xi_n(s)]^q\right]$ is uniformly bounded by Lemma~\ref{lem:variance}, a special case of~\eqref{eq:ave} yields
\beqn
\begin{split}
& \mu\!\left[A(\xi_n(s)) [\xi_n(t)-\xi_n(s)]^q\right] 
\\
=\ & \sum_j \mu\!\left[1_{I_{z,n,s,j}} A(\xi_n(s)) [\xi_n(t)-\xi_n(s)]^q\right] 
\\
=\ & \sum_j  \mu_{z,n,s,j}[A(\xi_n(s))]\,\mu \!\left[1_{I_{z,n,s,j}}  [\xi_n(t)-\xi_n(s)]^q\right] + O(n^{-\frac12})
\\
=\ & \sum_j  \mu[1_{I_{z,n,s,j}} A(\xi_n(s))] \, \mu_{z,n,s,j} \!\left[ [\xi_n(t)-\xi_n(s)]^q\right] + O(n^{-\frac12})\ .
\end{split}
\eeqn
 Here $\mu_{z,n,s,j}$ is the measure $\mu$ conditioned on $I_{z,n,s,j}$; let us denote the conditional density by~$\rho_{z,n,s,j}$. To finish the proof, it is enough to show that
\beqn
\max_j \left|\mu_{z,n,s,j}  \!\left[ [\xi_n(t)-\xi_n(s)]^q\right] - \mu \!\left[ [\xi_n(t)-\xi_n(s)]^q\right] \right| = o(1)
\eeqn
as $n\to\infty$.
Fixing $p>\frac12$ arbitrarily, it follows from~\eqref{eq:xi_diff} that 
\beqn
\xi_n(t)-\xi_n(s) 
 = n^{\frac12} \int_{s+n^{-p}}^{t} \bar f_{n,\round{nr}} \,  \rd r + o(1)
\eeqn
holds for any $t$ and $s$. In other words, the lower limit of integration can be slightly increased, and it is sufficient to prove
\beqn
\max_j \left| \mu_{z,n,s,j} \!\left[ \left[n^{\frac12} \int_{s+n^{-p}}^{t} \bar f_{n,\round{nr}} \,  \rd r \right]^q\right] - \mu \!\left[ \left[n^{\frac12} \int_{s+n^{-p}}^{t} \bar f_{n,\round{nr}} \,  \rd r \right]^q\right] \right| = o(1)\ .
\eeqn
The increment $n^{-p}$ facilitates the use of $L^1$ convergence of the pushforwards of~$\rho_{z,n,s,j}$ and~$\rho$. 
Indeed, since $\bar f_{n,\round{nr}}$ is uniformly bounded, it suffices to show that
\beqn
\max_j \|\cL_{n,\round{n(s+n^{-p})}}\cdots \cL_{n,1}(\rho_{z,n,s,j}-\rho)\|_{L^1} = o(n^{-\frac q2})\ .
\eeqn
Without loss of generality, we may assume that $L\ge L_*$. By virtue of Lemma~\ref{lem:D_stability}, both of the densities $\tilde\rho_{z,n,s,j} = \cL_{n,\round{ns}}\cdots \cL_{n,1}\rho_{z,n,s,j}$ and $\rho_{n,\round{ns}} = \cL_{n,\round{ns}}\cdots\cL_{n,1}\rho$ are then in $\cD_L$. Lemma~\ref{lem:memory_loss} now yields
\beqn
\max_j \|\cL_{n,\round{n(s+n^{-p})}}\cdots \cL_{n,\round{ns}+1}(\tilde\rho_{z,n,s,j}-\rho_{n,\round{ns}})\|_{L^1} \le C\vartheta^{n^{1-p}},
\eeqn
where~$C$ depends on~$L$. The proof is complete.
\end{proof}


\medskip
\section{Proof of Theorem~\ref{thm:mean}}\label{sec:mean}

Recall the integral expression of~$\zeta_n$ in~\eqref{eq:zeta_int}.
Given an initial probability measure~$\mu$ on~$\bS$, each~$\zeta_n$ is a random element of $C^0([0,1],\bR)$ with distribution $\bfP^\mu_n$. The corresponding expectation will be denoted by~$\bfE^\mu_n$. 

\medskip
In probability theory, the notion of tightness plays a central r\^ole in obtaining limit laws:
Since $C^0([0,1],\bR)$ is a complete and separable metric space, Prohorov's theorem states that a collection of probability measures on it is tight if and only if the collection is relatively sequentially compact in the topology of weak convergence \cite{Billinsgley_1999}. Hence, a tight sequence of measures is guaranteed to have limit points in the topology of weak convergence.

\medskip
\begin{lem}\label{lem:mean_tightness}
Let the measure $\mu$ be arbitrary. The sequence of measures $(\bfP^\mu_n)_{n\ge 1}$ is tight.
\end{lem}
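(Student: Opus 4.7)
The plan is to observe that the processes $\zeta_n$ are uniformly Lipschitz in $t$ with a deterministic constant independent of $n$ and $x$, so tightness follows from Arzel\`a--Ascoli with probability one (no probabilistic argument is even needed).

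Concretely, since $f$ is Lipschitz continuous on $\bS$, it is bounded, and from the integral representation~\eqref{eq:zeta_int} we get, for every $x \in \bS$ and all $s,t \in [0,1]$,
\beqn
|\zeta_n(x,t) - \zeta_n(x,s)| = \left| \int_s^t f_{n,\round{nu}}(x)\,\rd u \right| \le \|f\|_\infty\, |t-s|\ ,
\eeqn
while $\zeta_n(x,0) = 0$. Thus every sample path $t \mapsto \zeta_n(x,t)$ lies in the set
\beqn
K = \bigl\{\omega \in C^0([0,1],\bR)\ :\ \omega(0) = 0\ \text{and}\ |\omega(t)-\omega(s)| \le \|f\|_\infty |t-s|\ \text{for all}\ s,t \in [0,1]\bigr\}\ .
\eeqn
The set $K$ is uniformly bounded (by $\|f\|_\infty$), equicontinuous, and closed in $C^0([0,1],\bR)$, hence compact by the Arzel\`a--Ascoli theorem. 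Since $\bfP^\mu_n(K) = 1$ for every $n \ge 1$ and every initial measure~$\mu$, the sequence $(\bfP^\mu_n)_{n\ge 1}$ is trivially tight.

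There is no real obstacle here; the key observation is just that $\zeta_n$, being a time average of a uniformly bounded observable, is automatically Lipschitz in $t$ with constant $\|f\|_\infty$. The more substantive work, controlling the fluctuations of $\xi_n$ rather than $\zeta_n$, is the one that will require the decorrelation estimates of Section~\ref{sec:preliminariesI}.
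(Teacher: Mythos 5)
Your argument is correct and is essentially the paper's own proof: the paper likewise notes that $|\zeta_n(t_2)-\zeta_n(t_1)|\le\|f\|_\infty|t_2-t_1|$, so that the paths are uniformly Lipschitz and bounded, and concludes tightness from this. You merely make the Arzel\`a--Ascoli/compact-set step explicit, which the paper leaves implicit.
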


\medskip
\begin{proof}
Note already that, for $t_1,t_2\in[0,1]$,
\beqn
\begin{split}
\zeta_n(t_2) - \zeta_n(t_1) 
& = \int_{t_1}^{t_2}  f_{n,\round{ns}}\,\rd s\ . 
\end{split}
\eeqn
Accordingly,
$
|\zeta_n(t_2) - \zeta_n(t_1)| \le (t_2-t_1)\| f \|_\infty.
$
In other words, the sequence $(\zeta_n)_{n\ge 1}$ of functions is uniformly Lipschitz and bounded. This suffices for tightness on the classical Wiener space $C^0([0,1],\bR)$.
\end{proof}

\medskip
 Next, we are going to show that the sequence actually has a unique limit, which we are going to identify. The following Dynkin formula will turn out useful in this regard.
In order to formulate it properly, let us introduce the evaluation functionals $\pi_t:C^0([0,1],\bR)\to\bR$, $t\in[0,1]$, defined by
\beqn
\pi_t(\omega) = \omega(t)\ .
\eeqn

\medskip
\begin{lem}\label{lem:Dynkin1}
Let $\rho\in\cup_{L>0}\cD_L$.
Suppose $\bfP$ is the weak limit of a subsequence~$(\bfP^\mu_{n_k})_{k\ge 1}$, and denote by~$\bfE$ the expectation with respect to~$\bfP$. 
For any $A\in C^\infty(\bR)$,
\beq\label{eq:mean_generator}
\frac{d}{dt}\bfE[A\circ\pi_t] = \bfE\!\left[A'\circ\pi_t \right]\hat \mu_t(f)\ .
\eeq
\end{lem}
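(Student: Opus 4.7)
The plan is to start from a finite $h>0$ increment of $A\circ\pi_t$, perform a Taylor expansion in $A$, exploit the decorrelation Lemma~\ref{lem:dec_moments}(i) to decouple the resulting product, replace pushforward averages by SRB averages via~\eqref{eq:pushforward_SRB_int}, and finally pass the weak limit and let $h\to 0$. Concretely, since $\|\zeta_n\|_\infty\le\|f\|_\infty$, I may replace $A$ by an element of $C^\infty_c(\bR)$ without affecting the values $A\circ\zeta_n$; in particular $A, A', A''$ are bounded, and $A\circ\pi_t, A'\circ\pi_t$ are bounded continuous functionals on $C^0([0,1],\bR)$.

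The first step is a Taylor estimate at the level of the pre-limit distributions. For each $n_k$ and each $t,h\in[0,1]$ with $t+h\le 1$,
\beqn
\mu\!\left[A(\zeta_{n_k}(t+h)) - A(\zeta_{n_k}(t))\right]
= \mu\!\left[A'(\zeta_{n_k}(t))(\zeta_{n_k}(t+h)-\zeta_{n_k}(t))\right] + E_{n_k}(t,h),
\eeqn
where $|E_{n_k}(t,h)|\le \tfrac12\|A''\|_\infty\,\mu[(\zeta_{n_k}(t+h)-\zeta_{n_k}(t))^2]\le \tfrac12\|A''\|_\infty\|f\|_\infty^2 h^2$, uniformly in $n_k$. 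Lemma~\ref{lem:dec_moments}(i) applied to $A'$ gives
\beqn
\mu\!\left[A'(\zeta_{n_k}(t))(\zeta_{n_k}(t+h)-\zeta_{n_k}(t))\right]
= \mu\!\left[A'(\zeta_{n_k}(t))\right]\cdot\mu\!\left[\zeta_{n_k}(t+h)-\zeta_{n_k}(t)\right] + o(1)
\eeqn
as $k\to\infty$ (with $t,h$ fixed).

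The second step evaluates the mean increment. By definition and~\eqref{eq:pushforward_SRB_int},
\beqn
\mu\!\left[\zeta_{n_k}(t+h)-\zeta_{n_k}(t)\right]
= \int_t^{t+h} \mu(f_{n_k,\round{n_k s}})\,\rd s
\to \int_t^{t+h}\hat\mu_s(f)\,\rd s
\eeqn
as $k\to\infty$ (the contribution from $s<bn_k^{-1}\log n_k$ is $O(n_k^{-1}\log n_k)$ and is negligible). Weak convergence $\bfP^\mu_{n_k}\Rightarrow\bfP$ combined with boundedness and continuity of $A\circ\pi_s$ and $A'\circ\pi_t$ therefore yields, in the joint limit $k\to\infty$,
\beqn
\bfE[A\circ\pi_{t+h}] - \bfE[A\circ\pi_t]
= \bfE[A'\circ\pi_t]\int_t^{t+h}\hat\mu_s(f)\,\rd s + R(t,h),
\eeqn
with $|R(t,h)|\le \tfrac12\|A''\|_\infty\|f\|_\infty^2 h^2$.

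In the final step I divide by $h$ and let $h\to 0$. Continuity of $s\mapsto\hat\mu_s(f)$ (already established via~\eqref{eq:SRB_parameter_regularity}, i.e.\ the same bound used in the statement of Theorem~\ref{thm:mean}) gives $h^{-1}\int_t^{t+h}\hat\mu_s(f)\,\rd s\to\hat\mu_t(f)$, while $h^{-1}R(t,h)\to 0$, yielding the right derivative of $\bfE[A\circ\pi_t]$ at $t$. The analogous argument with $h<0$ produces the matching left derivative, so~\eqref{eq:mean_generator} holds. The main obstacle is purely bookkeeping: one must make sure that the $o(1)$ error from the decorrelation step and the convergence in~\eqref{eq:pushforward_SRB_int} are taken \emph{before} the $h\to 0$ limit, while the $O(h^2)$ Taylor error is uniform in $n$ and therefore survives intact through the weak limit.
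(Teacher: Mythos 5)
Your proposal is correct and follows essentially the same route as the paper: Taylor expansion to second order, decorrelation via Lemma~\ref{lem:dec_moments}(i) to factor the increment, the convergence~\eqref{eq:pushforward_SRB_int} to identify $\mu[\zeta_n(t+h)-\zeta_n(t)]\to\int_t^{t+h}\hat\mu_s(f)\,\rd s$, the weak limit along the subsequence, and finally $h\to 0$ using continuity of $s\mapsto\hat\mu_s(f)$. Your explicit remark about ordering the $n\to\infty$ and $h\to 0$ limits (with the uniform $O(h^2)$ Taylor error surviving the first limit) is a welcome clarification of a step the paper leaves implicit.
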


\medskip
\begin{proof}
Let $A\in C^\infty(\bR)$. Using the uniform Lipschitz continuity and boundedness of~$(\zeta_n)_{n\ge 1}$, we get
\beqn
\begin{split}
& 
A(\zeta_n(t+h)) - A(\zeta_n(t))
= A'(\zeta_n(t)) [\zeta_n(t+h)-\zeta_n(t)] + O(h^2)\ ,
\end{split}
\eeqn
where the error term is uniform. Next, we integrate the above expansion with respect to~$\mu$ and take~$n\to\infty$ along the subsequence~$(n_k)_{k\ge 1}$.
Lemma~\ref{lem:dec_moments} guarantees that
\beqn
\begin{split}
& \mu\!\left[A'(\zeta_n(t))\, [\zeta_n(t+h)-\zeta_n(t)]\right]
- \mu\!\left[A'(\zeta_n(t))\right]\mu\!\left[\zeta_n(t+h)-\zeta_n(t) \right] = o(1)
\end{split}
\eeqn
as $n\to\infty$. For the weak limit~$\bfP$,
\beqn
\lim_{k\to\infty} \mu\!\left[A(\zeta_n(t+h)) - A(\zeta_n(t))\right] = \bfE\!\left[A\circ\pi_{t+h} - A\circ\pi_t\right]
\eeqn
and
\beqn
\lim_{k\to\infty} \mu\!\left[A'(\zeta_{n_k}(t))\right] = \lim_{k\to\infty} \bfE^\mu_{n_k}\!\left[A'\circ\pi_t\right] = \bfE\!\left[A'\circ\pi_t\right].
\eeqn
Recalling~\eqref{eq:pushforward_SRB_int},
\beqn
\lim_{n\to\infty}\mu[\zeta_n(t+h)-\zeta_n(t)] =  \int_{t}^{t+h}  \hat \mu_s(f)\,\rd s\ .
\eeqn 
By Lemma~\ref{lem:SRB_parameter_regularity},
\beqn
\int_{t}^{t+h} \hat \mu_s(f)\,\rd s =  \hat \mu_{t}(f) h + o(h)\ ,
\eeqn
which finishes the proof.
\end{proof}

\medskip

\subsection{Proof of Proposition~\ref{prop:meanLip}}\label{sec:meanLip}
We now show that the weak limit~$\bfP$ of the subsequence~$(\bfP^\mu_{n_k})_{k\ge 1}$
 is the point mass at $\zeta\in C^0([0,1],\bR)$ defined in~\eqref{eq:zeta_limit}.

\medskip
With the aid of~\eqref{eq:mean_generator}, we begin by computing
\beqn
\begin{split}
\frac{d}{dt}\bfE \! \left[|\pi_t - \zeta(t)|^2\right] 
& =  
\frac{d}{dt}\bfE \! \left[\pi_t^2\right] - 2\zeta(t)\frac{d}{dt}\bfE \! \left[\pi_t\right] - 2\bfE \! \left[\pi_t\right]\frac{d}{dt}\zeta(t)  + \frac{d}{dt} |\zeta(t)|^2
\\
& = 
2\bfE\!\left[\pi_t \right]\hat \mu_t(f) - 2\zeta(t)\hat\mu_t(f) - 2\bfE \! \left[\pi_t\right] \hat\mu_t(f)  + 2\zeta(t)\hat\mu_t(f) = 0\ .
\end{split}
\eeqn
Since $\pi_0 = 0$ almost surely with respect to $\bfP$ and $\zeta(0)=0$, we have $\bfE \! \left[|\pi_t - \zeta(t)|^2\right] = 0$ for all $t\in[0,1]$. By Tonelli's theorem,
\beqn
\bfE \!\left[\int_0^1|\pi_t - \zeta(t)|^2\,\rd t \right] = \int_0^1\bfE \! \left[|\pi_t - \zeta(t)|^2\right] \rd t = 0\ .
\eeqn
By the continuity of the paths, this proves the claim.

\medskip
In particular, the limit~$\bfP$ is independent of the initial density $\rho\in\cup_{L>0}\cD_L$, and of the weakly converging subsequence $(\bfP^\mu_{n_k})_{k\ge 1}$.
Thus, we have shown that, for any initial measure~$\mu$ with such a density, the sequence $(\bfP^\mu_n)_{n\ge 1}$ itself converges weakly to $\bfP$, completing the proof of Proposition~\ref{prop:meanLip}. 
\qed

\medskip
\subsection{Proof of Theorem~\ref{thm:mean}}\label{sec:meanProof}
Next, suppose~$\nu$ is an arbitrary absolutely continuous initial probability measure with density~$\psi$. Let $F:C^0([0,1],\bR)\to\bR$ be an arbitrary bounded continuous function, and denote $M=\sup_{\omega\in C^0([0,1],\bR)}|F(\omega)|$. By Lemma~\ref{lem:approx}, given any~$\ve>0$, there exists a measure~$\mu$ with density~$\rho\in\cup_{L>0}\cD_L$ such that
\beqn
\|\psi-\rho\|_{L^1} \le \frac\ve{2M}\ .
\eeqn
By the established weak convergence of $(\bfP^\mu_n)_{n\ge 1}$, there exists an integer $N>0$ such that
\beqn
|\bfE^\mu_n[F] - \bfE[F]| \le \frac\ve2\ , \quad n\ge N \ .
\eeqn
Then
\beqn
\begin{split}
|\bfE^\nu_n[F] - \bfE^\mu_n[F]|
& = \left|\int F(\zeta_n(x,\slot))\, \rd \nu(x) - \int F(\zeta_n(x,\slot))\, \rd \mu(x)\right|
\\
& \le \int | F(\zeta_n(x,\slot)) \, (\psi(x)-\rho(x)) |\, \rd \fm(x)
\\
& \le M \|\psi-\rho\|_{L^1} \le \frac\ve2 \ ,
\end{split}
\eeqn
so that
\beqn
|\bfE^\nu_n[F] - \bfE[F]| \le \ve\ , \quad n\ge N\ .
\eeqn
By the portmanteau theorem, this suffices to show that $(\bfP^\nu_n)_{n\ge 1}$ converges weakly to $\bfP$.

\medskip
The proof of Theorem~\ref{thm:mean} is now complete. \qed

\medskip
\begin{remark}
The last part of the proof implies that the results of this section hold for arbitrary absolutely continuous initial measures.
\end{remark}


\medskip
\section{Proofs of Lemma~\ref{lem:admissible} and Theorem~\ref{thm:fluctuations}}\label{sec:fluctuations}

\subsection{Proof of Lemma~\ref{lem:admissible}}\label{sec:admissible}

If $\eta\in(0,1)$ is arbitrary and~$\nu$ is a measure having a Lipschitz continuous density~$\psi$, then Corollary~\ref{cor:memory_bounds} yields
\beqn
\begin{split}
& \bigl|n^\frac12\nu(\zeta_n(\slot,t))-n^\frac12\fm(\zeta_n(\slot,t))\bigr|
 \le n^\frac12 \int_0^t |\nu(f_{n,\round{ns}}) - \fm(f_{n,\round{ns}})| \, \rd s
\\
& \qquad \le  n^\frac12 \int_0^t |\nu_{n,\round{ns}}(f) - \fm_{n,\round{ns}}(f)| \, \rd s
\le  n^\frac12 \int_0^t \left|\int \cL_{n,\round{ns}}\cdots\cL_{n,1}(\psi-1) \, f \,\rd\fm\right|  \rd s
\\
& \qquad \le C n^\frac12 \int_0^t \vartheta^{ns} \, \rd s \le Cn^{-\frac12}
\end{split}
\eeqn
uniformly in $t$. Hence the admissibility condition~\eqref{eq:admissible_indep_x} is satisfied. This proves item~(i).

\medskip
Now assume $\eta>\frac12$ and fix $\eta'\in(\frac12,\eta)$. Recalling~\eqref{eq:pushforward_SRB_int},
\beq\label{eq:explicit_admissible}
\bigl|n^\frac12\zeta(t)-n^\frac12\fm(\zeta_n(\slot,t))\bigr|
\le n^\frac12 \int_0^t |\hat\mu_s(f) - \fm(f_{n,\round{ns}})| \, \rd s
\le C(n^{\frac12-\eta'} + n^{-\frac12}\log n)\ .
\eeq
Again the admissibility condition~\eqref{eq:admissible_indep_x} is satisfied. This proves item~(ii).

\medskip
Finally, let $\ve>0$ be arbitrary, and denote the density of~$\nu$ by~$\psi$. By Lemma~\ref{lem:approx}, there exists a measure $\mu$ with density $\varphi\in\cD_L$ such that $\|\varphi-\psi\|_{L^1} \le \|f\|_\infty^{-1}\frac\ve2$.
Then
\beqn
\sup_{t\in[0,1]} |\nu(\zeta_n(\slot,t)) - \zeta(t)| \le \int_0^1 | \nu(f_{n,\round{ns}}) - \hat\mu_s(f)  |\,  \rd s \le \frac\ve2 +  \int_0^1 | \mu(f_{n,\round{ns}}) - \hat\mu_s(f)  |\,  \rd s\ .
\eeqn
By~\eqref{eq:pushforward_SRB_int}, the last term is bounded by $\frac\ve2$ for all large enough~$n$. This proves item~(iii).

\medskip
This finishes the proof of Lemma~\ref{lem:admissible}.
\qed

\medskip
\subsection{Proof of Proposition~\ref{prop:fluctuations}}\label{sec:tech}
Throughout this section we will assume that the initial measure~$\mu$ is given and use the centering $c_n(t) = \mu(\zeta_n(t))$ for the process~$\chi_n$. Recall the convention from the beginning of Section~\ref{sec:preliminariesII} that in this case we write $\xi_n$ instead of $\chi_n$, in order to stress the explicit role of the initial measure in the centering. Thus~$\xi_n$ has the definition in~\eqref{eq:xi} and the integral expression in~\eqref{eq:xi_int}. Each~$\xi_n$ is a random element of~$C^0([0,1],\bR)$ with distribution~$\bP^\mu_n$. The expectation corresponding to $\bP^\mu_n$ will be denoted by~$\bE^\mu_n$.

\medskip
The rest of the subsection constitutes the proof of Proposition~\ref{prop:fluctuations}. As in the previous section, the first step is to prove tightness. 

\medskip
\begin{lem}\label{lem:xi_tight}
Let $\mu$ be a measure with a density that is Lipschitz continuous on $J_z$ for some $z\in\bS$. The sequence of measures $(\bP^\mu_n)_{n\ge 1}$ is tight. 
\end{lem}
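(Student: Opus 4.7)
The plan is to invoke Kolmogorov's tightness criterion on $C^0([0,1],\bR)$. Since $\xi_n(0)=0$ by~\eqref{eq:xi_int}, it suffices to establish a fourth-moment estimate
$$\mu\!\left[(\xi_n(t+h)-\xi_n(t))^4\right]\le C h^2,\qquad 0\le t\le t+h\le 1,\ n\ge 1,$$
with $C$ independent of $n,t,h$. Tightness on the Wiener space then follows from Kolmogorov's criterion with exponents $\alpha=4$, $\beta=1$.

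To prove this, expand via~\eqref{eq:xi_diff}:
$$\mu\!\left[(\xi_n(t+h)-\xi_n(t))^4\right]=n^2\int_{[t,t+h]^4}\mu\!\left[\prod_{i=1}^4 \bar f_{n,\round{ns_i}}\right]\rd s_1\cdots\rd s_4.$$
By permutation symmetry of the integrand, it suffices to bound the integral over the ordered simplex $s_1\le s_2\le s_3\le s_4$; denote the gaps $g_j=s_{j+1}-s_j$, $j=1,2,3$. Writing $\bar f_{n,\round{ns_i}}(x)=\bigl(f-\mu(f_{n,\round{ns_i}})\bigr)\circ T_{n,\round{ns_i}}\circ\cdots\circ T_{n,1}(x)$, the observable $f-\mu(f_{n,\round{ns_i}})$ is Lipschitz continuous (being a shift of $f$ by a constant), so Lemma~\ref{lem:dec} with $k=4$ applies. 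Partition the simplex into three sub-regions according to which $g_j$ is the maximum.

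When $g_1=\max$ (or symmetrically $g_3=\max$), apply Lemma~\ref{lem:dec} with $m=1$ (resp.\ $m=3$) and use $\mu(\bar f_{n,k})=0$ to conclude
$$\left|\mu\!\left[\prod_i \bar f_{n,\round{ns_i}}\right]\right|\le C\vartheta^{n g_1}\qquad\text{(resp.\ }C\vartheta^{n g_3}\text{).}$$
When $g_2=\max$, apply Lemma~\ref{lem:dec} with $m=2$ and bound the two remaining factors by $|\mu[\bar f_{n,\round{ns_j}}\bar f_{n,\round{ns_{j+1}}}]|\le C\vartheta^{n g_j}$ (cf.~\eqref{eq:pair_bound}) to obtain
$$\left|\mu\!\left[\prod_i \bar f_{n,\round{ns_i}}\right]\right|\le C\vartheta^{n g_1}\vartheta^{n g_3}+C\vartheta^{n g_2}.$$

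The remaining step is an elementary integration. Changing variables to $(s_1,g_1,g_2,g_3)$, which has unit Jacobian, and using the elementary moments $\int_0^\infty\vartheta^{n g}\,\rd g=O(n^{-1})$ and $\int_0^\infty g^2\vartheta^{n g}\,\rd g=O(n^{-3})$, the main product term in case $g_2=\max$ contributes at most $n^2\cdot h\cdot O(n^{-1})\cdot h\cdot O(n^{-1})=O(h^2)$, while each exponential error term $\vartheta^{n g_j}$ contributes $n^2\cdot h\cdot O(n^{-3})=O(h/n)$. For $h\ge 1/n$ these combine into the desired $O(h^2)$ bound, since $h/n\le h^2$; for $h<1/n$ the trivial pointwise bound $|\bar f_{n,k}|\le 2\|f\|_\infty$ gives $\mu\!\left[(\xi_n(t+h)-\xi_n(t))^4\right]\le 16\|f\|_\infty^4 n^2 h^4\le 16\|f\|_\infty^4 h^2$ directly. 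The principal obstacle is simply the careful case-analysis in the ordered simplex (three sub-regions plus the case split on $h$ versus $1/n$); once these are settled, Kolmogorov's criterion delivers tightness.
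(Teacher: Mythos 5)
Your proof is correct and follows the same overall strategy as the paper's (Kolmogorov criterion via the fourth moment, with Lemma~\ref{lem:dec} supplying the decay), but the integration step is handled by a genuinely different decomposition. The paper avoids your case analysis by using a single uniform bound on the integrand: for $s_1\le s_2\le s_3\le s_4$, applying Lemma~\ref{lem:dec} once with $m=1$ and once with $m=3$ (each time letting the isolated factor $\mu(\bar f_{n,k})=0$ kill the product term) gives
\beqn
|\mu(\bar f_{n,\round{ns_1}}\bar f_{n,\round{ns_2}}\bar f_{n,\round{ns_3}}\bar f_{n,\round{ns_4}})|
\le C\min\!\bigl(\vartheta^{n g_1},\vartheta^{n g_3}\bigr)
\le C\,\vartheta^{\frac n2 g_1}\,\vartheta^{\frac n2 g_3}\ ,
\eeqn
where the last inequality is $\min(a,b)\le\sqrt{ab}$. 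The integrand is then a product of two decoupled factors in $(s_1,s_2)$ and $(s_3,s_4)$, so the four-fold integral is bounded by the square of a two-fold one, which evaluates directly to $O(h^2)$ with no reference to the relative sizes of $h$ and $1/n$. Your route --- splitting the simplex by the maximal gap $g_j$ --- is perfectly valid, and your bookkeeping of the resulting $O(h^2)+O(h/n)$ contributions and the auxiliary $h\gtrless 1/n$ split is correct (the trivial pointwise bound $n^{1/2}h\cdot 2\|f\|_\infty$ handles $h<1/n$). What the paper's version buys is exactly that extra case split: converting $\min$ to a geometric mean gives a factorized bound uniformly, so $I_*\le 2(t_2-t_1)/|\log\vartheta|$ and $I\le 4!\,I_*^2$ closes the argument in two lines. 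Both approaches rest on the same ingredients; yours is simply a bit more laborious in the elementary-integration phase.
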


\medskip
\begin{proof}
Note that $\pi_0 = 0$ almost surely with respect to $\bP^\mu_n$ for all~$n\ge 1$. By Kolmogorov's criterion, it is sufficient to find a constant~$K>0$ such that 
\beq\label{eq:Kolmogorov}
\mu\!\left[|\xi_n(t_2)-\xi_n(t_1)|^4\right] = \bE^\mu_n[|\pi_{t_2}-\pi_{t_1}|^4] \le K|t_2-t_1|^2
\eeq
holds for all $t_1,t_2\in[0,1]$ and all $n\ge 1$. By symmetry, the left side can be expressed as
\beqn
\begin{split}
I = 4!\, n^{2} \int_{t_1}^{t_2} \!\!\! \int_{t_1}^{s} \!\! \int_{t_1}^{r}\!\! \int_{t_1}^{u} \mu(\bar f_{n,\round{ns}} \bar f_{n,\round{nr}} \bar f_{n,\round{nu}} \bar f_{n,\round{nv}}) \, \rd v\, \rd u\, \rd r\,\rd s \ .
\end{split}
\eeqn
Observe that $\mu(\bar f_{n,\round{nt}}) = 0$ for all $t$. Hence, for $v \le u\le r\le s$,
\beqn
\begin{split}
& |\mu(\bar f_{n,\round{ns}} \bar f_{n,\round{nr}} \bar f_{n,\round{nu}} \bar f_{n,\round{nv}})|
\le C \min\!\left(\vartheta^{n(s-r)}\ , \vartheta^{n(u-v)}\right)
\le C\vartheta^{\frac n2(s-r)}\vartheta^{\frac n2(u-v) } 
\end{split}
\eeqn 
by Lemma~\ref{lem:dec}.
Assuming $t_1\le t_2$, let us define $I_* = n \int_{t_1}^{t_2} \int_{t_1}^{y} \vartheta^{\frac n2 (y-x)}\, \rd x \, \rd y$. Then
\beqn
I \le 4!\, I_*^2
\eeqn
by the preceding bound. Since
\beqn
\int_{t_1}^y \vartheta^{-\frac n2 x} \,\rd x = \frac{1}{n\log\vartheta^{-\frac12}}\left(\vartheta^{-\frac n2 y} - \vartheta^{-\frac n2 t_1}\right) \le \frac{\vartheta^{-\frac n2 y}}{n\log\vartheta^{-\frac12}} \ ,
\eeqn
we get
\beqn
I_* \le \frac{(t_2-t_1)}{\log\vartheta^{-\frac12}}\ ,
\eeqn
which gives the desired estimate in~\eqref{eq:Kolmogorov}. Hence, $(\bP^\mu_n)_{n\ge 1}$ is tight.
\end{proof}

\medskip
As in the previous section, the next step is to study the weak limit points of the sequence~$(\bP^\mu_n)_{n\ge 1}$, whose existence is guaranteed by tightness. Again, a Dynkin formula will be instrumental, albeit additional work will be required in the present setting. We write $A\in C_c^\infty(\bR)$ if $A\in C^\infty(\bR)$ and $A$ vanishes outside a compact set.

\medskip
\begin{lem}\label{lem:Dynkin2}
Let $\rho\in\cup_{L>0}\cD_L$.
Suppose $\bP$ is the weak limit of a subsequence~$(\bP^\mu_{n_k})_{k\ge 1}$, and denote by~$\bE$ the expectation with respect to~$\bP$. 
For any $A\in C_c^\infty(\bR)$,
\beq\label{eq:Dynkin}
\bE[A\circ\pi_{t}] = \bE[A\circ\pi_0] + \frac12 \int_0^t \bE[A''\circ\pi_s]\, \hat\sigma_s^2(f)\,\rd s \ .
\eeq
\end{lem}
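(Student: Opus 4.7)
The strategy is an It\^o-type Taylor expansion combined with the decorrelation/variance tools of Section~\ref{sec:preliminariesII}, and in particular Lemmas~\ref{lem:variance} and~\ref{lem:dec_moments}. Fix $A\in C_c^\infty(\bR)$ and $t\in[0,1]$. Partition $[0,t]$ uniformly as $0=s_0<s_1<\dots<s_m=t$ with step $h=t/m$, and for each $i$ write $\Delta_i=\xi_n(s_{i+1})-\xi_n(s_i)$. By telescoping and Taylor's theorem with remainder,
\beqn
A(\xi_n(t))-A(\xi_n(0))=\sum_{i=0}^{m-1}\left[A'(\xi_n(s_i))\,\Delta_i+\tfrac{1}{2}A''(\xi_n(s_i))\,\Delta_i^2+R_{n,i}\right],
\eeqn
with $|R_{n,i}|\le \tfrac{1}{6}\|A'''\|_\infty|\Delta_i|^3$. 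I then integrate against $\mu$ and, for each fixed $h$, pass first to the limit along the subsequence $n_k\to\infty$; afterwards I refine the partition by letting $h\to 0$.

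\textbf{First-order term.} By Lemma~\ref{lem:dec_moments}(ii) with $q=1$, for each fixed $i$,
\beqn
\mu[A'(\xi_n(s_i))\,\Delta_i]=\mu[A'(\xi_n(s_i))]\,\mu[\Delta_i]+o(1)\qquad(n\to\infty).
\eeqn
The choice of centering gives $\mu[\xi_n(s)]=0$ for every $s$, hence $\mu[\Delta_i]=0$, and since $m$ is fixed while $n\to\infty$ the whole first-order sum contributes $o(1)$.

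\textbf{Cubic remainder.} The key moment bound~\eqref{eq:Kolmogorov} established in the proof of Lemma~\ref{lem:xi_tight} gives $\mu[\Delta_i^4]\le K h^2$ uniformly in $n$, so by H\"older $\mu[|\Delta_i|^3]\le K^{3/4}h^{3/2}$. Summing,
\beqn
\sum_{i=0}^{m-1}\mu[|R_{n,i}|]\le \tfrac{1}{6}\|A'''\|_\infty\,mK^{3/4}h^{3/2}=O(h^{1/2}),
\eeqn
uniformly in $n$, so this term vanishes in the final $h\to 0$ limit.

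\textbf{Second-order term and conclusion.} By Lemma~\ref{lem:dec_moments}(ii) with $q=2$, for each fixed $i$,
\beqn
\mu[A''(\xi_n(s_i))\,\Delta_i^2]=\mu[A''(\xi_n(s_i))]\,\mu[\Delta_i^2]+o(1).
\eeqn
Weak convergence $\bP^\mu_{n_k}\Rightarrow\bP$ together with boundedness and continuity of $A''$ yields $\mu[A''(\xi_{n_k}(s_i))]\to\bE[A''\circ\pi_{s_i}]$, while Lemma~\ref{lem:variance} gives $\mu[\Delta_i^2]\to \int_{s_i}^{s_{i+1}}\hat\sigma_s^2(f)\,\rd s$ (the $o(n^{-1/2})+h\cdot o(1)$ errors summed over $m=t/h$ intervals still go to $0$ as $n\to\infty$ for fixed $h$). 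Combining the three steps, for each fixed $h$,
\beqn
\bE[A\circ\pi_t]-\bE[A\circ\pi_0]=\tfrac12\sum_{i=0}^{m-1}\bE[A''\circ\pi_{s_i}]\int_{s_i}^{s_{i+1}}\hat\sigma_s^2(f)\,\rd s+O(h^{1/2}).
\eeqn
The sum equals $\tfrac12\int_0^t g_h(s)\hat\sigma_s^2(f)\,\rd s$ with $g_h(s)=\bE[A''\circ\pi_{s_i}]$ on $[s_i,s_{i+1})$. Continuity of paths in $C^0([0,1],\bR)$ makes $s\mapsto A''\circ\pi_s$ continuous pointwise, and bounded convergence then makes $s\mapsto\bE[A''\circ\pi_s]$ continuous; combined with continuity of $\hat\sigma_s^2(f)$ from Lemma~\ref{lem:sigma}, $g_h\hat\sigma_\cdot^2(f)$ converges uniformly to $\bE[A''\circ\pi_\cdot]\hat\sigma_\cdot^2(f)$ on $[0,t]$ as $h\to 0$. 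Letting $h\to 0$ gives~\eqref{eq:Dynkin}.

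\textbf{Main obstacle.} The delicate coordination is the iterated limit: all decorrelation estimates from Lemma~\ref{lem:dec_moments} are merely $o(1)$ per time step, so one must fix the partition, exploit weak convergence along $n_k\to\infty$ while the number of summands is frozen, and only then refine. The cubic remainder bound via \eqref{eq:Kolmogorov} is what permits the partition to be refined uniformly in $n$, and without that (essentially $L^4$) tightness input the two limits could not be interchanged.
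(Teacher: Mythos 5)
Your argument is correct and follows essentially the same route as the paper: a third-order Taylor expansion, vanishing of the first-order term via the centering $\mu[\Delta_i]=0$ and Lemma~\ref{lem:dec_moments}, control of the cubic remainder via the $L^4$ estimate~\eqref{eq:Kolmogorov}, and identification of the quadratic term via Lemmas~\ref{lem:dec_moments} and~\ref{lem:variance}, with $n\to\infty$ taken along the subsequence before refining the partition. The paper packages the last step as the local increment identity $\bE[A\circ\pi_{t+h}]-\bE[A\circ\pi_t]=\tfrac12\bE[A''\circ\pi_t]\hat\sigma_t^2(f)\,h+o(h)$ (with errors uniform in $t$) followed by integration by continuity, which is precisely the derivative-then-integrate counterpart of your explicit Riemann-sum argument.
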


\medskip
\begin{proof}
By Taylor's theorem, there exists such a $u\in\bR$ that
\beqn
\begin{split}
A(\xi_n(t+h)) & = A(\xi_n(t)) + A'(\xi_n(t)) [\xi_n(t+h)-\xi_n(t)] 
\\
& \qquad + \frac12 A''(\xi_n(t)) [\xi_n(t+h)-\xi_n(t)]^2 
+ \frac16 A'''(u)(\xi_n(t+h)-\xi_n(t))^3\ .
\end{split}
\eeqn
Note that
\beqn
\mu\!\left[|\xi_n(t+h)-\xi_n(t)|^3\right] \le C h^{2\cdot\frac{3}{4}} = o(h)
\eeqn
by Jensen's inequality together with~\eqref{eq:Kolmogorov}. Since $A'''$ is bounded, we thus have
\beqn
\begin{split}
\mu\!\left[A(\xi_n(t+h))\right] & = \mu\!\left[A(\xi_n(t))\right] + \mu\!\left[A'(\xi_n(t)) [\xi_n(t+h)-\xi_n(t)] \right]
\\
& \qquad + \frac12 \mu\!\left[A''(\xi_n(t)) [\xi_n(t+h)-\xi_n(t)]^2\right]
+ o(h) \ 
\end{split}
\eeqn
where the error term is uniform in~$n$ and~$t$.

Lemma~\ref{lem:dec_moments} guarantees that, for $q\in\{1,2\}$,
\beqn
\begin{split}
& \mu\!\left[A^{(q)}(\xi_n(t)) [\xi_n(t+h)-\xi_n(t)]^q \right] - \mu\!\left[A^{(q)}(\xi_n(t))\right] \mu\!\left[[\xi_n(t+h)-\xi_n(t)]^q \right] \to 0
\end{split}
\eeqn
as $n\to\infty$. By~\eqref{eq:xi_diff},
\beqn
\mu\!\left[\xi_n(t+h)-\xi_n(t) \right]=0 \ .
\eeqn
Next, note that
\beqn
\lim_{k\to\infty }\mu\!\left[A''(\xi_{n_k}(t))\right] = \bE[A''\circ\pi_t]\ .
\eeqn
By Lemma~\ref{lem:variance},
\beqn
\lim_{n\to\infty}\mu\!\left[[\xi_n(t+h)-\xi_n(t)]^2 \right] = h\,\hat\sigma_t^2(f) + o(h)\ .
\eeqn
Recall that the function $t\mapsto \hat\sigma_t^2(f)$ is continuous. Since also $t\mapsto \bE[A''\circ\pi_t]$ is continuous, we thus arrive at~\eqref{eq:Dynkin}.
\end{proof}

\medskip
Define the differential operator
\beqn
\mathscr{L}_t = \frac12\hat\sigma_t^2\,\frac{d^2}{dx^2} \ .
\eeqn
Note that~$\mathscr{L}_t$ appears on the right side of~\eqref{eq:Dynkin}. Lemma~\ref{lem:Dynkin2} thus leads us to conjecture that the limit process $\chi$ is a diffusion with~$\mathscr{L}_t$ as its generator. That is, $\chi$ should solve the stochastic differential equation 
\beq\label{eq:SDE}
d\chi(t) = \hat\sigma_t(f)\,dW_t \ ,
\eeq
where $W_t$ is a standard Brownian motion. Indeed, It\=o calculus for $\chi$ defined by~\eqref{eq:SDE} yields a Dynkin formula which is of precisely the same form as~\eqref{eq:Dynkin}, with the law of~$\chi$ in place of~$\bP$. We proceed to prove rigorously that the limit process is indeed characterized by~\eqref{eq:SDE}.

\medskip
Let us briefly discuss the solutions to~\eqref{eq:SDE}, which are here always required to start at~$0$. Since the coefficient $\hat\sigma_t(f)$ is bounded in~$t$ and independent of $\chi$, given a Brownian motion, there exists a strong solution (adapted to the filtration generated by the Brownian motion) which has continuous paths and is strongly unique (i.e., its modifications are indistinguishable). 
Moreover, weak solutions to~\eqref{eq:SDE} are unique in law; from here on we denote the associated law by~$Q$. These facts imply that the martingale problem corresponding to the generator $\mathscr{L}_t$ and the starting point $0$ is well posed: 
\medskip
\begin{lem}\label{lem:well_posed}
The measure $Q$ is the \emph{unique} measure with the properties that $Q(\pi_0 = 0) = 1$ and that, for all $A\in C_c^\infty(\bR)$, the process
\beq\label{eq:Mdef}
M_t = A\circ\pi_t - A\circ\pi_0 - \int_0^t \mathscr{L}_sA\circ\pi_s \,\rd s\ , \quad t\in[0,1]\ ,
\eeq
is a martingale with respect to $Q$ and the filtration $(\fF_t)_{0\le t\le 1}$, where $\fF_t$ is the sigma-algebra on $C^0([0,1],\bR)$ generated by $\{\pi_s\, : \, 0\le s\le t\}$.
\end{lem}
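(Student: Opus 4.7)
\medskip

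The plan is to prove the lemma in two steps: first verify that $Q$ itself satisfies the martingale property (existence), and then show that any measure with the two stated properties must agree with $Q$ (uniqueness). For existence, I would take a strong solution $\chi_t$ of the SDE~\eqref{eq:SDE} on some probability space carrying a Brownian motion $W_t$, and apply It\^o's formula to $A\circ\chi_t$ for $A\in C_c^\infty(\bR)$. Since $\hat\sigma_t(f)$ is bounded and continuous in $t$ (Lemma~\ref{lem:sigma}) and $A$ is compactly supported with bounded derivatives, this gives
\beqn
A(\chi_t) - A(\chi_0) - \int_0^t \mathscr{L}_s A(\chi_s)\,\rd s = \int_0^t A'(\chi_s)\hat\sigma_s(f)\,\rd W_s\ ,
\eeqn
and the integrand on the right is bounded, so the right side is a true (not just local) martingale with respect to the filtration generated by $W$. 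Pushing forward to path space $C^0([0,1],\bR)$ via $\chi$ yields that $M_t$ is a martingale with respect to $Q$ and the canonical filtration $(\fF_t)$; and $\chi_0=0$ gives $Q(\pi_0=0)=1$.

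For uniqueness, suppose $Q'$ is any measure on $C^0([0,1],\bR)$ with both listed properties. I would first promote the martingale identity from $A\in C_c^\infty(\bR)$ to the (unbounded) functions $A_1(x)=x$ and $A_2(x)=x^2$ by a standard cutoff argument: pick $A_{1,R},A_{2,R}\in C_c^\infty(\bR)$ that agree with $A_1,A_2$ on $[-R,R]$, use a stopping-time argument together with the fact that $Q'(\sup_{t\le 1}|\pi_t|<\infty)=1$, and let $R\to\infty$ (integrability being guaranteed because the second-moment martingale identity will give uniform $L^2$ bounds). This shows that under $Q'$ the coordinate process $\pi_t$ is a continuous martingale started at $0$ with quadratic variation
\beqn
\langle \pi\rangle_t = \int_0^t \hat\sigma_s^2(f)\,\rd s\ ,
\eeqn
because taking $A=A_2$ gives that $\pi_t^2-\int_0^t \hat\sigma_s^2(f)\,\rd s$ is a $Q'$-martingale.

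At this point I would invoke the standard representation theorem for continuous martingales (a Dambis--Dubins--Schwarz / martingale-representation argument; see~\cite{RogersWilliams_Vol2} or \cite{StroockVaradhan}): enlarging the probability space if needed, there exists a standard Brownian motion $\widetilde W$ such that $\pi_t=\int_0^t \hat\sigma_s(f)\,\rd\widetilde W_s$ under $Q'$. In other words, the coordinate process under $Q'$ is a weak solution of~\eqref{eq:SDE} starting at $0$. Uniqueness in law for weak solutions of~\eqref{eq:SDE} (which holds because the diffusion coefficient $\hat\sigma_t(f)$ is deterministic, bounded and depends only on time, so strong uniqueness is trivial via It\^o isometry) then forces $Q'=Q$.

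The main obstacle I anticipate is the passage from compactly supported test functions $A\in C_c^\infty(\bR)$ to the quadratic test function needed to identify the quadratic variation: this requires an a priori bound ruling out explosion of $\pi_t$ under $Q'$. The cleanest route is to choose a sequence of cutoffs $A_{2,R}$ agreeing with $x^2$ on $[-R,R]$ with uniformly bounded second derivatives, apply the martingale property at the stopping time $\tau_R=\inf\{t:|\pi_t|\ge R\}\wedge 1$, and use Doob's $L^2$-inequality together with the uniform bound $\|\hat\sigma_\cdot^2(f)\|_\infty<\infty$ to conclude that $\bE_{Q'}[\sup_{t\le 1}\pi_t^2]<\infty$ and hence $\tau_R\to 1$ almost surely. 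Once this is in hand, the rest is routine Stroock--Varadhan machinery.
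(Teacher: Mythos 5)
Your proof is correct. The paper does not write out an argument for this lemma --- it simply cites \cite{RogersWilliams_Vol2} and \cite{Durrett_1996} --- and the Stroock--Varadhan route you outline (verify the martingale property for $Q$ via It\^o's formula, then for uniqueness promote the test functions to $x$ and $x^2$ by a cutoff/stopping argument, identify the quadratic variation, invoke the martingale representation theorem, and conclude from weak uniqueness of the time-inhomogeneous SDE with deterministic bounded coefficient) is precisely the standard argument those references supply.
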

\medskip
We refer to \cite{RogersWilliams_Vol2} for the proofs of the above statements. Also~\cite{Durrett_1996} is a helpful text on stochastic analysis.

\medskip
The next result states that also the measure~$\bP$ solves the above martingale problem. Hence, it follows from the lemma above that $\bP = Q$.

\medskip
\begin{prop}\label{prop:martingale}
Suppose the density of the initial measure $\mu$ is in $\cup_{L>0}\cD_L$ and that~$\bP$ is the weak limit of a subsequence $(\bP_{n_k})_{k\ge 1}$. Then, given any $A\in C_c^\infty(\bR)$, the process~$(M_t)_{t\in[0,1]}$ defined in~\eqref{eq:Mdef} is a martingale with respect to $\bP$ and the filtration $(\fF_t)_{0\le t\le 1}$. In particular,
\beqn
\bP = Q\ .
\eeqn
\end{prop}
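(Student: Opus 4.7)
The strategy is to verify that $(M_t)$ is a $\bP$-martingale with respect to $(\fF_t)$, and then to conclude $\bP=Q$ by invoking the well-posedness in Lemma~\ref{lem:well_posed}. By a standard monotone-class argument, the martingale property will follow once I establish, for every $A\in C_c^\infty(\bR)$, every $k\ge 1$, every $0\le t_1<\cdots<t_k\le s<t\le 1$, and every $H\in C_c^\infty(\bR^k)$, the conditional Dynkin identity
\begin{equation*}
\bE\bigl[H(\pi_{t_1},\ldots,\pi_{t_k})(A\circ\pi_t-A\circ\pi_s)\bigr] = \int_s^t \bE\bigl[H(\pi_{t_1},\ldots,\pi_{t_k})\,\mathscr{L}_u A\circ\pi_u\bigr]\,\rd u.
\end{equation*}

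The first preparatory step is a past-conditioned version of Lemma~\ref{lem:dec_moments}(ii). Writing $\Phi_n(x)=H(\xi_n(x,t_1),\ldots,\xi_n(x,t_k))$ and $B=A^{(q)}$ for $q\in\{1,2\}$, I would prove
\begin{equation*}
\mu\bigl[\Phi_n\,B\!\circ\!\xi_n(s)\,(\xi_n(t)-\xi_n(s))^q\bigr] - \mu\bigl[\Phi_n\,B\!\circ\!\xi_n(s)\bigr]\,\mu\bigl[(\xi_n(t)-\xi_n(s))^q\bigr] = o(1).
\end{equation*}
Since $t_1,\ldots,t_k,s$ are all at most $s$, the uniform bound~\eqref{eq:xi_difference} forces each $\xi_n(\slot,t_i)$ and $\xi_n(\slot,s)$ to vary by $O(n^{-1/2})$ on elements of the partition $\cP_{z,n,s}$; Lipschitz continuity of $H$ and $B$ then puts $\Phi_n\,B\!\circ\!\xi_n(s)$ within $O(n^{-1/2})$ of its conditional mean on each cell. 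The remainder of the argument follows Lemma~\ref{lem:dec_moments}(ii) verbatim: truncate the increment at scale $n^{-p}$ with $p>\tfrac12$ via~\eqref{eq:xi_diff}, and apply Lemma~\ref{lem:memory_loss} to couple cell-conditional pushforwards with the pushforward of the full initial density.

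Next I apply Taylor's theorem to $A(\xi_n(t+h))-A(\xi_n(t))$ inside $\mu[\Phi_n\,\slot\,]$, just as in the proof of Lemma~\ref{lem:Dynkin2}. The cubic remainder contributes $o(h)$ uniformly in $n$ by~\eqref{eq:Kolmogorov} and Jensen's inequality. The first-order term vanishes: the past-conditioned decorrelation splits it as $\mu[\Phi_n A'\!\circ\!\xi_n(t)]\cdot\mu[\xi_n(t+h)-\xi_n(t)]+o(1)$, and the second factor is zero by construction of $\xi_n$. The second-order term, after the same decorrelation, becomes $\tfrac12\,\mu[\Phi_n A''\!\circ\!\xi_n(t)]\cdot\mu[(\xi_n(t+h)-\xi_n(t))^2]+o(1)$, which by Lemma~\ref{lem:variance} equals $\tfrac12\,\mu[\Phi_n A''\!\circ\!\xi_n(t)]\bigl(h\,\hat\sigma_t^2(f)+o(h)\bigr)+o(1)$. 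Passing $n\to\infty$ along $(n_k)$ and using weak convergence (with $\Phi=H(\pi_{t_1},\ldots,\pi_{t_k})$) yields the infinitesimal identity
\begin{equation*}
\bE\bigl[\Phi(A\circ\pi_{t+h}-A\circ\pi_t)\bigr] = h\,\bE\bigl[\Phi\,\mathscr{L}_t A\circ\pi_t\bigr] + o(h),
\end{equation*}
uniformly in $t$.

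To integrate this infinitesimal identity, I will partition $[s,t]$ into $N$ equal pieces $s=u_0<u_1<\cdots<u_N=t$, apply the identity on each piece and telescope. Uniform continuity of $u\mapsto\bE[\Phi\,\mathscr{L}_u A\circ\pi_u]$, which combines continuity of $\hat\sigma_u^2$ (Lemma~\ref{lem:sigma}) with continuity of $u\mapsto\bE[\Phi A''\!\circ\!\pi_u]$ along continuous paths under $\bP$ (via dominated convergence), guarantees that the total error $N\cdot o(N^{-1})$ vanishes as $N\to\infty$, yielding the displayed conditional Dynkin identity and hence $\bE[\Phi(M_t-M_s)]=0$. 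Since the cylinder functionals $\Phi$ form a $\pi$-system generating $\fF_s$, this extends to all bounded $\fF_s$-measurable integrands, so $(M_t)$ is a $\bP$-martingale, and Lemma~\ref{lem:well_posed} gives $\bP=Q$. The main obstacle will be propagating uniformity in $n$, $t$, and the past times $t_1,\ldots,t_k$ through the Taylor remainder, the past-conditioned decorrelation, and the pushforward-versus-SRB comparison: absent such uniformity, telescoping $N$ errors of size $o(N^{-1})$ would not converge to zero, and the final step would collapse.
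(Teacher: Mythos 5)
Your proposal is correct but takes a genuinely different, cleaner route than the paper. The paper establishes $\bE[\Phi(M_t-M_r)]=0$ entirely at the finite-$n$ level: it partitions $[r,t]$ into $K_n = \round{n^q(t-r)}$ pieces with $q<\tfrac12$, introduces the cell-modified process $\hat\xi_{n,u}$ and the shifted processes $\xi_n^{(u)}$, $\xi_n^{\nu,(u)}$, and carries out a detailed cell-by-cell Taylor cancellation of the increment $A(\hat\xi_{n,u}(u+\delta_n))-A(c_{z,n,u,j})$ against $\int_u^{u+\delta_n}\mathscr{L}_sA(\hat\xi_{n,u}(s))\,\rd s$, choosing $K_n$ so that $K_n\cdot O(n^{-1/2})$ and $K_n\cdot o(\delta_n)$ both vanish. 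Your route instead extends Lemma~\ref{lem:dec_moments}(ii) by replacing $A(\xi_n(s))$ with $H(\xi_n(t_1),\ldots,\xi_n(t_k))\,B(\xi_n(s))$ --- which, since all $t_i\le s$, varies by $O(n^{-1/2})$ on cells of $\cP_{z,n,s}$ by~\eqref{eq:xi_difference}, so the original proof applies unchanged --- obtains the conditioned infinitesimal Dynkin identity at the limit $\bP$ (the decorrelation errors disappear for each fixed $(u,h)$ once $n\to\infty$ along $(n_k)$; the residual $o(h)$ comes from the cubic Taylor bound via~\eqref{eq:Kolmogorov} and from~\eqref{eq:var_int}), and then integrates by telescoping over a fixed $N$-piece partition and sending $N\to\infty$. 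In effect you are proving the conditioned analogue of Lemma~\ref{lem:Dynkin2} and then integrating in $t$, which sidesteps almost all of the paper's machinery for this proposition. Your flagged worry about uniformity is the right thing to check and does resolve: after $n\to\infty$ the only surviving error terms in the infinitesimal identity are $O(h^{3/2})$ (uniform in $u$ by~\eqref{eq:Kolmogorov}) and the uniform-in-$t$ error of~\eqref{eq:var_int}, so $N\cdot o(N^{-1})\to 0$; the Riemann sum converges because $u\mapsto\bE[\Phi\,\mathscr{L}_uA\circ\pi_u]$ is continuous, which your dominated-convergence argument along continuous paths together with Lemma~\ref{lem:sigma} correctly supplies.
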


\medskip
Note the limit~$\bP$ is then independent of the initial density $\rho\in\cup_{L>0}\cD_L$, and of the weakly converging subsequence $(\bP^\mu_{n_k})_{k\ge 1}$.
Thus, for any initial measure~$\mu$ with such a density, the sequence $(\bP^\mu_n)_{n\ge 1}$ itself converges weakly to $\bP$. 
Accordingly, we have identified the limit of $\xi_n$ to be the stochastic process~$\chi$ appearing in~\eqref{eq:SDE}. In particular, once this proposition is proven, we will have proven Proposition~\ref{prop:fluctuations}.

\medskip
\begin{proof}[Proof of Proposition~\ref{prop:martingale}]
Let $L>0$ and $z\in\bS$ be such that the logarithm of the initial density~$\rho$ is Lipschitz continuous on $J_z$ with constant $L$.
Since~$A$ and~$A''$ are bounded, we have $\bE[|M_t|]<\infty$ for all~$t\in[0,1]$. It remains to prove that
\beqn
\bE[M_t-M_r\,|\,\fF_r] = 0
\eeqn
whenever $0\le r\le t\le 1$. The martingale condition above is equivalent to the one that
\beq\label{eq:premartingale}
\bE\!\left[B_1\circ \pi_{t_1}\cdots B_m\circ \pi_{t_m}\, (M_t-M_r) \right] = 0
\eeq
whenever $m\ge 1$; and $B_1,\dots,B_m:\bR\to\bR$ are bounded, Lipschitz continuous functions; and $0<t_1<\dots<t_m\le r<t\le 1$. We now fix such numbers and functions for good.

Let us fix now $q\in(0,\frac12)$, and write $K_n = \round{n^q (t-r)}$ and $\delta_n = (t-r)/K_n$.  Since
\beqn
M_t-M_r = \sum_{k=0}^{K_n-1} (M_{r+(k+1)\delta_n}-M_{r+k\delta_n}) \ ,
\eeqn
equation~\eqref{eq:premartingale} will follow once we establish that
\beqn
K_n\sup_{r\le u\le t-\delta_n}\mu\!\left[B_1(\xi_n(t_1))\cdots B_m(\xi_n(t_m))\, (M_{u+\delta_n}-M_u)(\xi_n)\right] = o(1)\ ,
\eeqn
as $n\to\infty$.

We proceed as in the proof of Lemma~\ref{lem:dec_moments}, resorting to the induced partition $\cP_{z,n,u} = \{I_{z,n,u,j}\}_{j=1}^{N_{n,u}}$ of the arc $J_z = \bS\setminus\{z\}$. Let $\hat x_{z,n,u,j}$ denote the midpoint of~$I_{z,n,u,j}$ and $c_{z,n,u,j} = \xi_n(\hat x_{z,n,u,j}, u)$ the value of $\xi_n(\slot, u)$ at the midpoint. It will then be convenient to define the function $\hat \xi_{n,u}:\bS\times[0,1]\to\bR$ by setting
\beqn
\hat \xi_{n,u}(x,t) = \xi_n(x,t) - \xi_n(x,u) + c_{z,n,u,j}
\eeqn
for all $x\in I_{z,n,u,j}$ and all $j$. (For brevity, we suppress the $z$-dependence of $\hat\xi_{n,u}$.)  We think of~$\hat\xi_{n,u}$ as a modification of $\xi_n$ according to the value of the latter process at time~$u$. By~\eqref{eq:xi_difference},
\beqn
\sup_{x\in\bS} |\xi_n(x,t)-\hat\xi_{n,u}(x,t)| = \max_j\sup_{x\in I_{z,n,u,j}} |\xi_n(x,u) - \xi_n(\hat x_{z,n,u,j}, u)| \le Cn^{-\frac12}\ ,
\eeqn
uniformly in~$u$.
Since~$A$ and~$A''$ are Lipschitz continuous, the functional $M_{u+\delta_n}-M_u:C^0([0,1],\bR)\to\bR$ satisfies
\beqn
\sup_{x\in\bS} |(M_{u+\delta_n}-M_u)(\xi_n(x,\slot))-(M_{u+\delta_n}-M_u)(\hat\xi_{n,u}(x,\slot))| \le Cn^{-\frac12} \ .
\eeqn
Because $B_1,\dots,B_m$ are furthermore bounded, Lipschitz continous functions,~\eqref{eq:ave} yields
\beqn
\begin{split}
& \mu\!\left[B_1(\xi_n(t_1))\cdots B_m(\xi_n(t_m))\, (M_{u+\delta_n}-M_u)(\xi_n)\right] 
\\
=\ & \mu\!\left[B_1(\xi_n(t_1))\cdots B_m(\xi_n(t_m))\, (M_{u+\delta_n}-M_u)(\hat \xi_{n,u})\right] + O(n^{-\frac12}) 
\\
=\ & \sum_j \mu\!\left[1_{I_{z,n,u,j}} B_1(\xi_n(t_1))\cdots B_m(\xi_n(t_m))\, (M_{u+\delta_n}-M_u)(\hat \xi_{n,u})\right] + O(n^{-\frac12})
\\
=\ & \sum_j  \mu_{z,n,u,j}[B_1(\xi_n(t_1))\cdots B_m(\xi_n(t_m))] \, \mu \!\left[1_{I_{z,n,u,j}}  (M_{u+\delta_n}-M_u)(\hat \xi_{n,u})\right] + O(n^{-\frac12})
\\
=\ & \sum_j  \mu[1_{I_{z,n,u,j}} B_1(\xi_n(t_1))\cdots B_m(\xi_n(t_m))] \, \mu_{z,n,u,j} \!\left[ (M_{u+\delta_n}-M_u)(\hat \xi_{n,u})\right] + O(n^{-\frac12})\ .
\end{split}
\eeqn
Here the error term is again uniform in~$u$. Since $K_n=o(n^\frac12)$, it thus suffices to show that
\beqn
K_n \sup_{r\le u\le t-\delta_n} \max_j \left| \mu_{z,n,u,j} \!\left[ (M_{u+\delta_n}-M_u)(\hat \xi_{n,u}) \right] \right| = o(1)\ .
\eeqn
Here
\beqn
(M_{u+\delta_n}-M_u)(\hat\xi_{n,u}) = A(\hat\xi_{n,u}(u+\delta_n)) - A(\hat\xi_{n,u}(u)) - \int_u^{u+\delta_n} \mathscr{L}_sA(\hat\xi_{n,u}(s))\,\rd s\ .
\eeqn
For $x\in I_{z,n,u,j}$, we Taylor expand $A(\hat\xi_{n,u}(x,u+\delta_n))$ at $\hat\xi_{n,u}(x,u)=c_{z,n,u,j}$. By Taylor's theorem, there exists $\kappa_{z,n,u,j}(x)\in\bR$ such that
\beqn
\begin{split}
(M_{u+\delta_n}-M_{u})(\hat \xi_{n,u})
& = A'(c_{z,n,u,j}) \, [\xi_n(u+\delta_n) - \xi_n(u) ]
\\
& \quad + \left[\frac12 A''(c_{z,n,u,j})\, [\xi_n(u+\delta_n) - \xi_n(u)]^2  - \int_{u}^{u+\delta_n} \mathscr{L}_sA(\hat\xi_{n,u}(s))\,\rd s\right]
\\
& \quad + \frac16 A'''(\kappa_{z,n,u,j}) \, [\xi_n(u+\delta_n) - \xi_n(u)]^3\ .
\end{split}
\eeqn
We bound the term on each line on the right side separately. The second line requires demonstrating cancellations in the difference and is saved for last.

It is instrumental that (assuming $\round{nt_1}\ge N(L)$) the density~$\rho_{z,n,u,j}$ of $\mu_{z,n,u,j}$ satisfies
\beqn
\tilde \rho_{z,n,u,j} = \cL_{n,\round{nu}}\cdots\cL_{n,1}\rho_{z,n,u,j}\in\cD_{L_*}
\eeqn
by part~(iii) of Lemma~\ref{lem:D_stability}. We denote the measure corresponding to~$\tilde \rho_{z,n,u,j}$ by~$\tilde \mu_{z,n,u,j}$. Moreover, it will be convenient to define 
\beqn
\xi_n^{(u)}(h) = n^\frac12 \int_u^{u+h} f_{n,\round{ns},\round{nu}+1} - \mu_{n, \round{nu}} (f_{n,\round{ns},\round{nu}+1}) \,\rd s\ ,
\eeqn
where $f_{n,k,l} = f\circ T_{n,k}\circ\dots\circ T_{n,l}$ for $k\ge l$ and $f_{n,k,k+1}=f$. Then
\beqn
\xi_n(u+h) - \xi_n(u) = \xi_n^{(u)}(h)\circ T_{n,\round{nu}}\circ\dots\circ T_{n,1}\ .
\eeqn
It is helpful to think of $\xi_n^{(u)}$ as $\xi_n$ shifted along the curve~$\gamma$.
Below, we will need to change the centering of~$\xi_n^{(u)}$, so we already define
\beqn
\begin{split}
\xi_n^{\nu,(u)}(h) & = n^\frac12 \int_u^{u+h} f_{n,\round{ns},\round{nu}+1} - \nu (f_{n,\round{ns},\round{nu}+1}) \,\rd s
= \xi_n^{(u)}(h) - \nu\bigl[\xi_n^{(u)}(h)\bigr] 
\end{split}
\eeqn
for an arbitrary measure~$\nu$ with density~$\psi\in\cD_{L_*}$.
By Corollary~\ref{cor:memory_bounds} applied to the difference~$g = \psi - \rho_{n, \round{nu}}$, we have the uniform bound
\beqn
|\nu(f_{n,\round{ns},\round{nu}+1}) - \mu_{n, \round{nu}} (f_{n,\round{ns},\round{nu}+1})| = O(\vartheta^{\round{ns} - \round{nu}}) = O(\vartheta^{n(s-u)})\ .
\eeqn
Hence, an integration yields
\beqn
\begin{split}
\nu\bigl[\xi_n^{(u)}(h)\bigr] 
& = n^\frac12 \int_{u}^{u+h} \nu(f_{n,\round{ns},\round{nu}+1}) - \mu_{n, \round{nu}} (f_{n,\round{ns},\round{nu}+1}) \,\rd s  
= O(n^{-\frac12})
\end{split}
\eeqn
uniformly in~$u$,~$h$ and~$\nu$, so that
\beq\label{eq:xi^u}
\xi_n^{\nu,(u)}(h) =  \xi_n^{(u)}(h) + O(n^{-\frac12})
\eeq
and, by $\nu\bigl[\xi_n^{\nu,(u)}(h)\bigr] = 0$,
\beq\label{eq:xi^u_mean}
\nu\bigl[\xi_n^{(u)}(h)\bigr] = O(n^{-\frac12})
\eeq
uniformly in $u$,~$h$ and $\nu$. Note that the error terms above are independent of~$x$.

\bigskip
\noindent{\it The first term.}
Since
\beqn
\mu_{z,n,u,j}\left [A'(c_{z,n,u,j}) \, [\xi_n(u+\delta_n) - \xi_n(u) ]\right] = A'(c_{z,n,u,j})\, \mu_{z,n,u,j}\!\left [ \xi_n(u+\delta_n) - \xi_n(u) \right]\ ,
\eeqn
where $A'$ is bounded, we only need a bound on the second factor on the right. Here
\beqn
\mu_{z,n,u,j}\!\left [ \xi_n(u+\delta_n) - \xi_n(u) \right] = \tilde \mu_{z,n,u,j}\!\left [ \xi_n^{(u)}(\delta_n) \right] \ .
\eeqn
Recalling the earlier remark on~$\tilde \mu_{z,n,u,j}$,~\eqref{eq:xi^u_mean} yields
\beqn
\tilde \mu_{z,n,u,j}\!\left [ \xi_n^{(u)}(\delta_n) \right] = o(K_n^{-1})\ .
\eeqn
In particular,
\beqn
K_n \sup_{r\le u\le t-\delta_n} \max_j \left| \mu_{z,n,u,j} \!\left[   A'(c_{z,n,u,j}) \, [\xi_n(u+\delta_n) - \xi_n(u) ]    \right] \right| = o(1)\ .
\eeqn

\bigskip
\noindent{\it The third term.}
Using the boundedness of $A'''$ together with Jensen's inequality,
\beqn
\left| \mu_{z,n,u,j} \!\left[ A'''(\kappa_{z,n,u,j}) \, [\xi_n(u+\delta_n) - \xi_n(u)]^3 \right] \right| \le C \mu_{z,n,u,j} \!\left[[\xi_n(u+\delta_n) - \xi_n(u)]^4 \right]^{\frac34}\ .
\eeqn
Here
\beqn
\mu_{z,n,u,j}\!\left [ [\xi_n(u+\delta_n) - \xi_n(u)]^4 \right] = \tilde \mu_{z,n,u,j}\!\left [ \xi_n^{(u)}(\delta_n)^4 \right] \ .
\eeqn
Let us consider an arbitrary measure~$\nu$ with density~$\psi\in\cD_{L_*}$ instead of~$\tilde \mu_{z,n,u,j}$.
First of all, we have the uniform bound
\beqn\label{eq:fourth_moment}
\nu\!\left [ \xi_n^{\nu,(u)}(h)^4 \right] = O(h^{2}) 
\eeqn
analogously to~\eqref{eq:Kolmogorov}. Together with \eqref{eq:xi^u}, Jensen's inequality then shows that
\beq\label{eq:xi^u_fourth}
\nu\!\left [ \xi_n^{(u)}(h)^4 \right] = O(h^2 + h^{\frac32}n^{-\frac12} + hn^{-1} + n^{-2})
\eeq
uniformly. In particular,
\beqn
\tilde \mu_{z,n,u,j}\!\left [ |\xi_n^{(u)}(\delta_n)|^3 \right]  = O(\delta_n^2 + \delta_n^{\frac32}n^{-\frac12} + \delta_nn^{-1} + n^{-2})^\frac34 = o(K_n^{-1})\ ,
\eeqn
so that
\beqn
K_n \sup_{r\le u\le t-\delta_n} \max_j \left| \mu_{z,n,u,j} \!\left[ A'''(\kappa_{z,n,u,j}) \, [\xi_n(u+\delta_n) - \xi_n(u)]^3 \right] \right| = o(1)\ .
\eeqn

\bigskip
\noindent{\it The second term.}
Note that
\beqn
\mu_{z,n,u,j}\!\left [\frac12A''(c_{z,n,u,j}) \, [\xi_n(u+\delta_n) - \xi_n(u) ]^2\right] = \frac12 A''(c_{z,n,u,j})\, \mu_{z,n,u,j}\!\left [ [\xi_n(u+\delta_n) - \xi_n(u)]^2 \right]\ ,
\eeqn
where
\beqn
\mu_{z,n,u,j}\!\left [ [\xi_n(u+\delta_n) - \xi_n(u)]^2 \right] = \tilde \mu_{z,n,u,j}\!\left [ \xi_n^{(u)}(\delta_n)^2 \right] \ .
\eeqn
On the other hand,
\beqn
\begin{split}
& \mu_{z,n,u,j} \!\left[\int_{u}^{u+\delta_n} \mathscr{L}_sA(\hat\xi_{n,u}(s))\,\rd s\right] 
\\
& = \int_{u}^{u+\delta_n}  \mu_{z,n,u,j} \!\left[\frac12\hat\sigma_s^2(f)A''(\hat\xi_{n,u}(s))\right] \rd s
\\
& = \frac12 \int_{u}^{u+\delta_n} \hat\sigma_s^2(f) \, \mu_{z,n,u,j} \!\left[A''(\xi_n(s) - \xi_n(u) + c_{z,n,u,j} )\right] \rd s
\\
& = \frac12 \int_{u}^{u+\delta_n} \hat\sigma_s^2(f) \, \mu_{z,n,u,j} \!\left[A''(\xi_n^{(u)}(s-u)\circ T_{n,\round{nu}}\circ\dots\circ T_{n,1} + c_{z,n,u,j} )\right] \rd s
\\
& = \frac12 \int_{u}^{u+\delta_n} \hat\sigma_s^2(f) \, \tilde\mu_{z,n,u,j} \!\left[A''(\xi_n^{(u)}(s-u) + c_{z,n,u,j} )\right] \rd s\ .
\end{split}
\eeqn
By Taylor's theorem, there exists $\tilde\kappa_{z,n,u,j,s}(x)\in\bR$, such that
\beqn
\tilde\mu_{z,n,u,j} \!\left[A''(\xi_n^{(u)}(s-u) + c_{z,n,u,j} )\right] = A''(c_{z,n,u,j}) + \tilde\mu_{z,n,u,j} \!\left[A'''(\tilde\kappa_{z,n,u,j,s} )\, \xi_n^{(u)}(s-u)\right]\ .
\eeqn
Using the boundedness of $A'''$ together with Jensen's inequality,~\eqref{eq:xi^u_fourth} yields
\beqn
\left| \tilde\mu_{z,n,u,j} \!\left[A'''(\tilde\kappa_{z,n,u,j,s} )\, \xi_n^{(u)}(s-u)\right] \right| \le C \tilde\mu_{z,n,u,j} \!\left[\xi_n^{(u)}(s-u)^4\right]^{\frac14} = O(\delta_n^{\frac12})\ .
\eeqn
Accordingly,
\beqn
\begin{split}
\mu_{z,n,u,j} \!\left[\int_{u}^{u+\delta_n} \mathscr{L}_sA(\hat\xi_{n,u}(s))\,\rd s\right] 
& = \frac12 A''(c_{z,n,u,j})  \int_{u}^{u+\delta_n} \hat\sigma_s^2(f)\, \rd s + O(\delta_n^{\frac32})
\\
& = \frac12 A''(c_{z,n,u,j})\, \hat\sigma_u^2(f)\,\delta_n + o(\delta_n) \ .
\end{split}
\eeqn
In the second line we used~\eqref{eq:var_int}.
We remark that the error term $o(\delta_n) = o(K_n^{-1})$ is uniform in~$u$ and~$j$, and that $A''$ is bounded.
In order to prove that
\beqn
\begin{split}
& K_n \sup_{r\le u\le t-\delta_n} \max_j \left| \mu_{z,n,u,j} \!\left[ \frac12 A''(c_{z,n,u,j})\, [\xi_n(u+\delta_n) - \xi_n(u)]^2  - \int_{u}^{u+\delta_n} \mathscr{L}_sA(\hat\xi_{n,u}(s))\,\rd s\right] \right|
\end{split}
\eeqn
tends to zero as $n\to\infty$, it thus only remains to show
\beqn
K_n \sup_{r\le u\le t-\delta_n} \max_j \left| \tilde \mu_{z,n,u,j}\!\left [ \xi_n^{(u)}(\delta_n)^2 \right] - \hat\sigma_u^2(f)\,\delta_n\right| = o(1)\ .
\eeqn
Let us again consider an arbitrary measure~$\nu$ with density~$\psi\in\cD_{L_*}$ instead of~$\tilde \mu_{z,n,u,j}$. Recalling~\eqref{eq:xi^u} and~$\nu\bigl[\xi_n^{\nu,(u)}(h)\bigr] = 0$, we get the uniform estimate
\beqn
 \nu\!\left [ \xi_n^{(u)}(\delta_n)^2 \right] =  \nu\!\left [ \xi_n^{\nu,(u)}(\delta_n)^2 \right] + O(n^{-1})\ .
\eeqn
Analogously to Lemma~\ref{lem:variance}, 
\beqn
K_n \! \left| \nu\!\left [ \xi_n^{\nu,(u)}(\delta_n)^2 \right] - \hat\sigma_u^2(f)\,\delta_n\right| = K_n  \bigl|o(\delta_n)+o(n^{-\frac12})\bigr| = o(1)\ ,
\eeqn
uniformly in~$u$ and~$\nu$. The last bounds combined yield the desired bound.

\medskip
This finishes the proof of Proposition~\ref{prop:martingale}.
\end{proof}

\medskip
The proof of Proposition~\ref{prop:fluctuations} is now complete.
\qed

\medskip
\subsection{Finishing the proof of Theorem~\ref{thm:fluctuations}}\label{sec:portmanteau} 
It remains to upgrade Proposition~\ref{prop:fluctuations} to the full version of Theorem~\ref{thm:fluctuations}. 
 The upgrade entails relaxing the regularity assumption on the initial measure as well as the assumption that the centering sequence be defined in terms of the initial measure. To facilitate these changes, let us introduce the explicit notation
\beqn
\chi_n^\nu(t) = n^\frac12\zeta_n(t) - n^\frac12\nu(\zeta_n(t))\ ,
\eeqn
for any measure $\nu$, and
\beqn
\chi_n^{c_n}(t) = n^\frac12\zeta_n(t) - n^\frac12 c_n(t)\ ,
\eeqn
for any centering sequence $(c_n)_{n\ge 1}$.
Note that all functions above, including $c_n$, depend on~$x$, but following our earlier convention we suppress it from the notation. Given an initial measure~$\mu$, we denote the laws of~$\chi_n^\nu$ and~$\chi_n^{c_n}$ by~$\bP^{\mu,\nu}_n$ and~$\bP^{\mu,c_n}_n$, respectively. The respective expectations are denoted~$\bE^{\mu,\nu}_n$ and~$\bE^{\mu,c_n}_n$.

Assume~$\mu$ is an arbitrary absolutely continuous measure with density~$\rho$ and $(c_n)_{n\ge 1}$ is an arbitrary centering sequence admissible with respect to~$\mu$; see Definition~\ref{defn:admissible}. Our proof of Theorem~\ref{thm:fluctuations} amounts to showing that there exists a measure~$\nu$ with density~$\psi\in\cup_{L>0}\cD_L$ with the following properties: (i)~$\bP^{\nu,\nu}_n$ approximates $\bP$, (ii)~$\bP^{\nu,\fm}_n$ approximates~$\bP^{\nu,\nu}_n$, (iii)~$\bP^{\mu,\fm}_n$ approximates $\bP^{\nu,\fm}_n$, and (iv)~$\bP^{\mu,c_n}_n$ approximates~$\bP^{\mu,\fm}_n$ arbitrarily well for all large enough~$n\ge 1$. This will be accomplished using the portmanteau theorem, as follows.

\smallskip
Let $F:C^0([0,1],\bR)\to\bR$ be an arbitrary bounded Lipschitz continuous function and~$\ve>0$ an arbitrary number. Denote~$M=\sup_{\omega\in C^0([0,1],\bR)}|F(\omega)|$ and $\ell = \mathrm{Lip}(F)$.

\smallskip
\noindent{\it Step (i).} Observe that Proposition~\ref{prop:fluctuations} applies directly to~$\bP^{\nu,\nu}_n$: for any $\psi\in\cup_{L>0}\cD_L$, there exists an integer $N_1>0$ such that
\beqn
|\bE[F] - \bE^{\nu,\nu}_n[F]| \le \frac\ve4\ , \quad n\ge N_1 \ .
\eeqn

\smallskip
\noindent{\it Step (ii).} Since~$\psi\in\cup_{L>0}\cD_L$, the centering sequence $\nu(\zeta_n(t))$ is admissible with respect to~$\nu$; see Lemma~\ref{lem:admissible}. By Remark~\ref{rem:admissible}, there exists an integer~$N_2>0$ such that $|n^\frac12\nu(\zeta_n(t))-n^\frac12\fm(\zeta_n(t))|\le \frac\ve{4\ell}$ for all $n\ge N_2$. Thus,
\beqn
\begin{split}
|\bE^{\nu,\nu}_n[F] - \bE^{\nu,\fm}_n[F]| 
& = \left|\int F(\chi_n^\nu(x,\slot)) - F(\chi_n^\fm(x,\slot))\, \rd \nu(x)\right|
\le \ell \frac\ve{4\ell}
\\
& 
\le \frac\ve4\ , \quad n\ge N_2 \ .
\end{split}
\eeqn

\smallskip
\noindent{\it Step (iii).}
By Lemma~\ref{lem:approx}, we may assume that~$\psi\in\cup_{L>0}\cD_L$ satisfies
\beqn
\|\psi-\rho\|_{L^1} \le \frac\ve{4M}\ .
\eeqn
Then
\beqn
\begin{split}
|\bE^{\nu,\fm}_n[F] - \bE^{\mu,\fm}_n[F]|
& = \left|\int F(\chi_n^\fm(x,\slot))\, \rd \nu(x) - \int F(\chi_n^\fm(x,\slot))\, \rd \mu(x)\right|
\\
& 
\le \int | F(\chi_n^\fm(x,\slot)) \, (\psi-\rho)(x) |\, \rd \fm(x)
\\
& \le M \|\psi-\rho\|_{L^1} \le \frac\ve4 \ , \quad n\ge 1 \ .
\end{split}
\eeqn

\smallskip
\noindent{\it Step (iv).} Recall that the centering sequence $c_n$ is assumed admissible with respect to~$\mu$.
Let us denote $E_n = \{x\in\bS\ :\ \sup_{t\in[0,1]}|n^\frac12c_n(x,t)-n^\frac12\fm(\zeta_n(t))|>\frac\ve{8\ell}\}$. There exists an integer~$N_3>0$ such that~$\mu(E_n) < \frac\ve{16M}$ for all~$n\ge N_3$. Splitting $\int = \int_{E_n} + \int_{\bS\setminus E_n}$, we have
\beqn
\begin{split}
|\bE^{\mu,\fm}_n[F] - \bE^{\mu,c_n}_n[F]| 
& = \left|\int F(\chi_n^\fm(x,\slot)) - F(\chi_n^{c_n}(x,\slot))\, \rd \mu(x)\right|
\\
& 
\le 2M\mu(E_n) + \ell\frac\ve{8\ell}(1-\mu(E_n))
\\
& 
\le \frac\ve4\ , \quad n\ge N_3 \ .
\end{split}
\eeqn

\smallskip
Collecting the bounds, we have shown that
\beqn
|\bE[F] - \bE^{\mu,c_n}_n[F] | \le \ve\ , \quad n\ge \max(N_1,N_2,N_3)\ .
\eeqn
By the portmanteau theorem, this suffices to show that $(\bP^{\mu,c_n}_n)_{n\ge 1}$ converges weakly to $\bP$.

\medskip
The proof of Theorem~\ref{thm:fluctuations} is now complete.
\qed

\medskip
\begin{remark}
The last part of the proof implies that the results of this section hold for arbitrary absolutely continuous initial measures and admissible centering sequences.
\end{remark}


\medskip
\section{Proofs of the generalizations}\label{sec:generalizations_proofs}

As mentioned at the beginning, little in the proofs of Theorems~\ref{thm:mean} and~\ref{thm:fluctuations} changes when one passes to their generalizations, Theorems~\ref{thm:gen1} and~\ref{thm:gen2}. In order to keep the presentation as lucid as possible, we have elected to save the generalizations for last. Here we expect the reader to be well familiar with all the preceding sections. Indeed, we will only point the reader to the straightforward adjustments required there to complete the proofs.

\medskip
\subsection{Proof of Theorem~\ref{thm:gen1}}
The first changes occur in Section~\ref{sec:preliminariesII}. In Lemma~\ref{lem:sigma},~$\hat\sigma_t^2$ is now the~$d\times d$ matrix defined in Theorem~\ref{thm:gen1}. Working with vector components, it amounts to a minor modification of the proof of Lemma~\ref{lem:sigma} to show that
\beq\label{eq:sigma_hat_vec}
\begin{split}
\hat\sigma_t^2(f) 
& = \hat \mu_t[\hat f_t\otimes \hat f_t] + \sum_{k=1}^\infty \hat \mu_t[\hat f_t \otimes (\hat f_t\circ \gamma_t^k) + (\hat f_t\circ \gamma_t^k) \otimes \hat f_t] 
\\
& = \hat \mu_t[\hat f_t\otimes \hat f_t] + \sum_{k=1}^\infty \fm[\hat f_t \otimes \cL_t^k(\hat\rho_t \hat f_t) + (\cL_t^k(\hat\rho_t \hat f_t))\otimes \hat f_t] \ ,
\end{split}
\eeq
and that the dependence on~$t$ is continuous. Similarly, we see that Lemma~\ref{lem:variance} remains true, except that we have the $d\times d$~matrix $\mu\!\left[[\xi_n(t)-\xi_n(s)]\otimes [\xi_n(t)-\xi_n(s)]\right]$ in place of the scalar~$\mu\!\left[[\xi_n(t)-\xi_n(s)]^2\right]$. In~\eqref{eq:ave}, the functions~$B_i$ are now bounded and Lipschitz continuous from~$\bR^d$ to~$\bR$, but the bound remains true. 
Lemma~\ref{lem:dec_moments} is modified as follows: we need $A\in C^\infty(\bR^d,\bR)$ in part~(i) and $A\in C_c^\infty(\bR^d,\bR)$ in part~(ii). Then~(i) and~(ii) with~$q=1$ continue to hold componentwise. In the case~$q=2$, the expression $[\xi_n(t)-\xi_n(s)]^2$ is replaced by $[\xi_n(t)-\xi_n(s)]\otimes [\xi_n(t)-\xi_n(s)]$. The proof remains identical.

Let us proceed to Section~\ref{sec:mean}.
In Lemma~\ref{lem:Dynkin1}, we get the Dynkin formula
\beqn
\frac{d}{dt}\bfE[A\circ\pi_t] =  \bfE\!\left[\nabla A\circ\pi_t \right] \cdot \hat \mu_t(f)
\eeqn
in place of~\eqref{eq:mean_generator}, for any $A\in C^\infty(\bR^d,\bR)$. Indeed, the modification of Lemma~\ref{lem:dec_moments} above implies
\beqn
\begin{split}
& \mu\!\left[\nabla A(\zeta_n(t))\cdot [\zeta_n(t+h)-\zeta_n(t)]\right]
- \mu\!\left[\nabla A(\zeta_n(t))\right] \cdot \mu\!\left[\zeta_n(t+h)-\zeta_n(t) \right] = o(1)
\end{split}
\eeqn
as $n\to\infty$; otherwise the proof is similar. The proof of Proposition~\ref{prop:meanLip} remains the same up to passing to vector notation.

Finally, let us point out the changes in Section~\ref{sec:fluctuations}. In the tightness proof of Lemma~\ref{lem:xi_tight}, we have the bound in~\eqref{eq:Kolmogorov} separately for each vector component $\xi_n^{(i)}$, $1\le i\le d$. The vector-valued case follows by an application of the Cauchy--Schwarz inequality. In Lemma~\ref{lem:Dynkin2}, we get, for any $A\in C_c^\infty(\bR^d,\bR)$, the Dynkin formula
\beq\label{eq:Dynkin_vector}
\bE[A\circ\pi_{t}] = \bE[A\circ\pi_0] + \frac12 \sum_{i,j=1}^d\int_0^t \bE[\partial_{ij}^2 A\circ\pi_s]\, \hat\sigma_s^2(f)_{ij}\,\rd s \ ,
\eeq
in place of~\eqref{eq:Dynkin}. This identity is obtained as before, by Taylor expanding $\mu[A(\xi_n(t))]$ and using the modifications of Lemmas~\ref{lem:dec_moments} and~\ref{lem:variance} above. Let us define $\hat\sigma_t(f)$ as the square root of the $d\times d$ covariance matrix $\hat\sigma_t^2(f)$. Then the stochastic differential equation~\eqref{eq:SDE}, where $W_t$ is an $\bR^d$-valued standard Brownian motion, has the partial differential operator
\beqn
\mathscr{L}_t = \frac12\sum_{i,j=1}^d\hat\sigma_t^2(f)_{ij}\,\frac{\partial^2}{\partial x_i\, \partial x_j}
\eeqn
as its generator, so that the Dynkin formula for~$\chi$ has the same form as~\eqref{eq:Dynkin_vector}. Again, the martingale problem corresponding to the generator $\mathscr{L}_t$ and the starting point $0$ is well posed; see Lemma~\ref{lem:well_posed} with the change $A\in C_c^\infty(\bR^d,\bR)$. We refer to~\cite{RogersWilliams_Vol2} for the proofs of these statements. 
Given the above changes, the proof of the martingale property in Proposition~\ref{prop:martingale} generalizes to the vector-valued case in a straightforward manner, which shows that the limit law~$\bP$ of~$\xi_n$ is the one of the process~$\chi$ appearing in~\eqref{eq:SDE}. To complete the proof, note that Section~\ref{sec:portmanteau} continues to apply, mutatis mutandis, switching from the space $C^0([0,1],\bR)$ of real-valued functions to $C^0([0,1],\bR^d)$.
\qed


\medskip
\subsection{Proof of Theorem~\ref{thm:gen2}}
Let $0=\hat\tau_0<\hat\tau_1<\cdots<\hat\tau_{m-1}<\hat\tau_m=1$ be the endpoints of the partition elements~$I_i$, $1\le i\le m$. 

The jumps in the curve $\gamma$ cause certain estimates in Section~\ref{sec:preliminariesI} to hold only piecewise, on each partition element. The first change occurs in Lemma~\ref{lem:transfer_pert}: \eqref{eq:transfer_parameter_regularity} continues to hold if~$s$ and~$t$ are in the same partition element. Likewise,~\eqref{eq:SRB_parameter_regularity} in Lemma~\ref{lem:SRB_parameter_regularity} continues to hold in the same piecewise sense. 
Another change occurs in Lemma~\ref{lem:pushforward_vs_SRB}, where the condition on $k$ is replaced by the piecewise analogue $n\hat\tau_i + b\log n \le k < n\hat\tau_{i+1}$ for some~$i$. In Lemma~\ref{lem:combineddensities} the condition on~$t$ is replaced by $\hat\tau_i + bn^{-1}\log n \le t < \hat\tau_{i+1}$ and $s\in I_i$ for some~$i$. 
Otherwise Section~\ref{sec:preliminariesI} remains intact. To recapitulate, the essential change is that the difference $\rho_{n,\round{nt}}-\hat\rho_t$ does not remain small when $t$ passes one of the finitely many singularities $\hat\tau_i$; to regain smallness, it is necessary to wait another $b\log n$ iterates exactly as was the case at $t=0$ before. Moreover, $\hat\rho_t-\hat\rho_s$ is only small if~$t$ and
~$s$ are in the same partition element and close to each other. However, it is important to point out that the singularities do not affect the \emph{regularity bounds} of the densities $\rho_{n,\round{nt}}$ in any way.

In Section~\ref{sec:preliminariesII} there are similar changes:~\eqref{eq:pushforward_SRB_int} holds if $\hat\tau_i + bn^{-1}\log n \le s < \hat\tau_{i+1}$ for some~$i$.
 We also note that the dependence of $\hat\sigma^2_t(f)$ (and of $\hat\mu_t(f)$) on~$t$ is piecewise-continuous, which is inconsequential; see the proof of Lemma~\ref{lem:sigma} (and of Lemma~\ref{lem:transfer_pert}). 
 Lemma~\ref{lem:variance} remains intact: in order to avoid singularities, one excludes in the domain of the $s$-integral on the right side of~\eqref{eq:second_temp2} a neighborhood of radius~$2a_n$ centered at each singularity $\hat\tau_i$, which only results in another error of the same order~$na_n^2$ as before. These are the only parts in Section~\ref{sec:preliminariesII} that require attention.

The above changes affect Section~\ref{sec:mean} in no way, so we are left with Section~\ref{sec:fluctuations}.
There the proof of Lemma~\ref{lem:admissible} stands; in the integral in~\eqref{eq:explicit_admissible} one has to take into account the above change in the condition of~\eqref{eq:pushforward_SRB_int} and thus remove an interval of length $bn^{-1}\log n$ at each singularity $\hat\tau_i$, but this does not affect the error term. The above changes do not affect the rest of Section~\ref{sec:fluctuations}.

\medskip
The generalization to vector-valued observables is now achieved exactly as in the proof of Theorem~\ref{thm:gen1}.
\qed




\bigskip
\bibliography{Quasistatic}{}
\bibliographystyle{plainurl}


\vspace*{\fill}

\end{document}